\newtheorem{thm}{Theorem}[section]
\newtheorem{theorem}[thm]{Theorem}
\newtheorem{lemma}[thm]{Lemma}
\newtheorem{claim}[thm]{Claim}
\newtheorem{corollary}[thm]{Corollary}
\DeclareMathOperator{\rank}{rank}
\DeclareMathOperator{\spa}{span}
\begin{document}
\title{The Signed Positive Semidefinite Matrix Completion Problem for Odd-$K_4$ Minor Free Signed Graphs}
\author{Shin-ichi Tanigawa\thanks{
Centrum Wiskunde $\&$ Informatica (CWI), Postbus 94079, 1090 GB
Amsterdam, The Netherlands, and
Research Institute for Mathematical Sciences,
Kyoto University, Sakyo-ku, Kyoto 606-8502, Japan.
e-mail: {\tt tanigawa@kurims.kyoto-u.ac.jp}}}
\maketitle

\begin{abstract}
We give a signed generalization of Laurent's theorem that characterizes feasible positive semidefinite matrix completion problems in terms of metric polytopes. 
Based on this result, we give a characterization of the maximum rank completions of the signed  positive semidefinite matrix completion problem for  odd-$K_4$ minor free signed graphs. 
The analysis can also be used to bound the minimum rank over the completions and to characterize uniquely solvable completion problems for odd-$K_4$ minor free signed graphs. 
As a corollary we derive a characterization of the universal rigidity of odd-$K_4$ minor free spherical tensegrities, and also a  characterization of signed graphs whose signed Colin de Verdi{\`e}re parameter $\nu$ is bounded by two, recently shown by Arav et al.

\medskip
\noindent
{\it Keywords}: positive semidefinite matrix completion problem, uniquely solvable SDP, low-rank completions, universal rigidity, spherical tensegrities, metric polytope, signed Colin de Verdi{\`e}re parameter 
\end{abstract}

\section{Introduction}
Given an undirected graph $G=(V,E)$ with vertex set $V=\{1,\dots, n\}$ and an edge weight $c:E\rightarrow [-1,1]$,   the {\em (real) positive semidefinite  matrix completion problem} ${\rm P}(G,c)$ asks to decide whether the following set is empty, and if not  to find a point in it:
\[
\left\{X\in{\cal S}^n_+\mid X[i,i]=1 \ \forall i\in V,  \  X[i,j]=c(ij) \ \forall ij\in E \right\}
\]
where ${\cal S}^n_+$ denotes the set of positive semidefinite matrices of size $n$.
If $G$ has no edge, then its feasible region is the set ${\cal E}_n:=\{X\in {\cal S}^n_+\mid X[i,i]=1 \ \forall i\in V\}$ of correlation matrices, which is known as the {\em elliptope}.
In general  the  set ${\cal E}(G)$ of edge weights $c$ for which the program ${\rm P}(G,c)$ is feasible is 
the projection of the elliptope along the coordinate axes~\cite{l97}, 
and understanding ${\cal E}(G)$ is one of fundamental questions in this context (see \cite{bjl,dl,l98, gjsw}).
It has been shown by Laurent~\cite{l97} that ${\rm arccos}({\cal E}(G))/\pi$ coincides with the {\em metric polytope} of a graph $G$  if and only if $G$ is $K_4$-minor free. The definition of the metric polytope will be given in the next section.

In this paper we consider a signed version of the PSD matrix completion problem.
A {\em signed graph} is a pair $(G,\Sigma)$ of an undirected graph $G$ (which may contains parallel edges) and $\Sigma\subseteq E$. For a signed graph $(G,\Sigma)$ and $c:E\rightarrow  [-1,1]$, 
the {\em signed PSD matrix completion problem} ${\rm P}(G,\Sigma,c)$ asks to decide  wether the following set is empty and to find a point if it is nonempty:
\[
\left\{ X\in{\cal S}^n_+ \mid 
X[i,i]=1 \ \forall i\in V, \  
X[i,j]\geq c(ij) \  \forall ij\in E\setminus \Sigma, \ 
X[i,j]\leq c(ij) \  \forall ij\in \Sigma
\right\}. 
\]
We prove a signed generalization of Laurent's theorem which says that,
 denoting by ${\cal E}(G,\Sigma)$ the set  of edge weights $c$ for which ${\rm P}(G,\Sigma, c)$ is feasible,
 ${\rm arccos}({\cal E}(G, \Sigma))/\pi$ coincides with a signed version of the metric polytope if and only if $(G, \Sigma)$ is odd-$K_4$ minor free.
(See Section~\ref{subsec:signed} for the definition of minors of signed graphs.)
In fact our main theorem (Theorem~\ref{thm:oddK4}) states  a much stronger property of the completion problem.
Namely, if $(G,\Sigma)$ is odd-$K_4$ minor free and $c\in [-1, 1]^E$ is nondegenerate\footnote{We say that $c$ is {\em nondegenerate} if $c\in (-1, 1]^{\Sigma}\times [-1,1)^{E\setminus \Sigma}$.
Note that, if $c(ij)=1$ with $ij\in E\setminus \Sigma$, then $X[i,j]=1$ and $X[i,k]=X[j,k]$ for any feasible $X$ and any $k\in V(G)\setminus \{i,j\}$. This implies that the feasible set of ${\rm P}(G,c)$ is equal to that of ${\rm P}(G/ij, c)$, where $G/ij$ is the graph obtained from $G$ by contracting $ij$, and we can always  focus on the contracted smaller problem. (In the Euclidean matrix completion problem, such a degeneracy corresponds to the case when  the distance between $i$ and $j$ is specified to be  zero, in that case $i$ and $j$ being recognized as just one point.) A similar trivial reduction is  possible if $c(ij)=-1$ with $ij\in \Sigma$. Hence, from the practical view point we can always assume that $c$ is nondegenerate.}, the strict complementarity always holds in ${\rm P}(G, \Sigma, c)$, 
and the maximum rank  is characterized by the rank of a dual solution determined by the facet of ${\rm arccos}(c)/\pi$ in the signed metric polytope.

In the proof we also obtain that any feasible ${\rm P}(G,\Sigma, c)$ has a solution of rank at most three (resp., at most two) if $(G,\Sigma)$ is odd-$K_4$ minor free (resp., odd-$K_3^2$ minor and odd-$K_4$ minor free).
This is the first signed version of the low-dimensional embeddability of edge-weighted graphs into the spherical space or  the Euclidean space discussed in~\cite{b07,bc07,lv14,h08}. 
Our main theorem will also play a key role in the analysis of the {\em singularity degree} of the positive semidefinite matrix completion problem in \cite{singularity}.


It is known that the strict complementarity condition is closely related to the unique solvability in semidefinite programming~\cite{cg}. 
Adapting the analysis,  we also give a characterization of uniquely solvable signed PSD matrix completions for odd-$K_4$ minor signed graphs (Theorem~\ref{thm:unique_K4}).
The characterization can be tested in polynomial time by a repeated application of a shortest path algorithm, provided that ${\rm arccos(c)}/\pi$ is given as input.
The concept of unique solvability of the signed PSD matrix completion problem coincides with the so-called {\em universal rigidity} of {\em spherical tensegrities} in rigidity theory.
(A tensegrity is a structure made of cables and struts, see, e.g.,~\cite{c14}.)
Thus our unique solvability characterization implies a characterization of the universal rigidity of odd-$K_4$ minor free spherical tensegrities (Corollary~\ref{thm:universal_K4}).
The universal rigidity is a modern topic in rigidity theory, which was introduced by Zhu, So, and Ye~\cite{zsy} and was implicit in a classical paper by Connelly~\cite{c}. 
Its characterization is known in the unsigned  {\em generic} case~\cite{gt}, and 
understanding it at the level of graphs or/and in nongeneric cases is recognized as a challenging problem~\cite{cg15,gt,jh}.

Our characterization of universal rigidity has an application to a graph parameter, 
{\em Colin de Verdi{\`e}re parameter} $\nu(G)$,  introduced by Colin de Verdi{\`e}re~\cite{c98}.
This  is one of well-studied parameters among those defined in terms of spectral properties  of graphs (see, e.g., \cite{h96,h02}). 
Recently Arav et al.~\cite{ahlv13} introduced a signed version of the Colin de Verdi{\`e}re parameter and gave a characterization of signed graphs for which the signed Colin de Verdi{\`e}re parameter is equal to one. Later in \cite{ahlv16} they further gave a characterization of signed graphs whose  signed Colin de Verdi{\`e}re parameter is bounded by at most two. 
This characterization can be derived  as a corollary of our characterization of universal rigidity.


The paper is organized as follows. 
In Section~\ref{sec:pre} we introduce terminogies for proving our main theorem.
In Section~\ref{sec:main} we state our main theorem and a corollary,
and the proof  is given in Section~\ref{sec:proof}.
In Section~\ref{sec:unique} we discuss about uniquely solvable completion problems for odd $K_4$-minor signed graphs.
We give corollaries to  the universal rigidity of  spherical tensegrities in Section~\ref{sec:universal} and to signed Colin de Verdi{\'e}re parameter in Section~\ref{sec:colin}.

\section{Preliminaries}
\label{sec:pre}
In Section~\ref{subsec:completion} we formulate the signed PSD matrix completion problem and its dual.
In Section~\ref{subsec:signed} we introduce necessary notion about signed graphs and a structural theorem of odd-$K_4$ minor free signed graphs.
In Section~\ref{subsec:metric} we introduce the metric polytope and its signed version.

We use the following notation throughout the paper.
For an undirected graph $G$, $V(G)$ and $E(G)$ denote the vertex and the edge set of $G$, respectively.
If $G$ is clear from the context, we simply use $V$ and $E$ to denote $V(G)$ and $E(G)$, respectively.
For $F\subseteq E$, let $V(F)$ be the set of vertices incident to an edge in $F$.
For $X\subseteq V$, let $\delta(X)$ denotes the set of edges between $X$ and $V\setminus X$.
If $X=\{v\}$ for some $v\in V$, then $\delta(\{v\})$ is simply denoted by $\delta(v)$. 
A path $P$ is said to be {\em internally disjoint} from a graph $G$ if $P$ and $G$ are edge-disjoint and their vertices do not intersect except possibly at  endvertices of $P$.

For a finite set $X$,   let $\mathbb{R}^X$ be a $|X|$-dimensional vector space each of whose coordinate is associated with an element in $X$.

As given in the introduction, we denote ${\cal E}_n=\{X\in {\cal S}_+^n: \forall i, X[i,i]=1 \}$.
Each entry of a symmetric matrix of size $n$ is associated with an edge of the complete graph $K_n$.
Using this correspondence, the projection $\pi_G$ of the space of real symmetric matrices of size $n$ to $\mathbb{R}^E$ is defined.
Then ${\cal E}(G)=\pi_G({\cal E}_n)$.
Also let ${\bm e}_i$ be a vector in $\mathbb{R}^n$ whose $i$-th coordinate is one and the other entries are zero.

A positive semidefinite matrix $X$ of rank $d$ can be represented as $P^{\top}P$ for some $d\times n$ matrix of rank $d$. 
This representation is referred to as a {\em Gram matrix representation} of $X$.
By assigning the $i$-th column of $P$ with each vertex, one can obtain a map $p:V\rightarrow \mathbb{R}^d$. If $X\in {\cal E}_n$, $p$ is actually a map to the unit sphere $\mathbb{S}^{d-1}$.
Conversely, any $p:V\rightarrow \mathbb{S}^{d-1}$ defines $X\in {\cal E}_n$ of rank $d$ 
by $X[i,j]=p(i)\cdot p(j)$. This $X$ is denoted by ${\rm Gram}(p)$.

\subsection{SDP formulation}
\label{subsec:completion}

Given a signed graph $(G,\Sigma)$ and $c\in [-1, 1]^{E}$, we are interested in the following SDP, denoted by ${\rm P}(G,\Sigma,c)$, and its dual:
\begin{equation*}
\begin{array}{ccc}
\sup & 0 & \\
{\rm s.t.} & X[i,j]\geq c(ij) & (ij\in E\setminus \Sigma) \\
 & X[i,j]\leq c(ij) & (ij\in E\cap \Sigma) \\
 & X[i,i]=1 & (i\in V) \\
 & X\succeq 0
\end{array}
\quad
\begin{array}{cccc}
 \inf & \sum_{i\in V} \omega(i)+ \sum_{ij\in E} \omega(ij)c(ij)  & \\
{\rm s.t.} & \omega(ij)\leq 0 & (ij\in E\setminus \Sigma) \\
 & \omega(ij)\geq 0 & (ij\in E\cap \Sigma) \\
 & \sum_{i\in V} \omega(i) E_{ii}+ \sum_{ij\in E} \omega(ij)E_{ij}\succeq 0 \\
 & \omega\in \mathbb{R}^{V\cup E}
\end{array}
\end{equation*}
where $E_{ij}=({\bf e}_i{\bf e}_j^{\top}+{\bf e}_j{\bf e}_i^{\top})/2$. 
Throughout the paper, for $\omega\in \mathbb{R}^{V\cup E}$, we  use the capital letter $\Omega$ to denote $\sum_{i\in V} \omega(i) E_{ii}+ \sum_{ij\in E} \omega(ij)E_{ij}$.
We say that $\omega$ is {\em supported on} $F\subseteq E$ if $\omega(e)=0$ for every $e\in E\setminus F$.
The signed version of ${\cal E}(G)$ can be defined as 
\begin{equation*}
{\cal E}(G,\Sigma)=\{c\in [-1,1]^E \mid {\rm P}(G, \Sigma, c) \text{ is feasible} \}.
\end{equation*}

A pair $(X, \omega)$ of a primal and a dual feasible solutions are said to satisfy the {\em complementarity condition} if ${\rm Tr}(X \Omega)=0$ and $(X[i,j]-c(ij))\omega(ij)=0$ for every edge $ij\in E$.
Since the dual problem is strictly feasible,  this is equivalent to saying that $\omega$ is a dual optimal solution.
Also the complementarity condition implies $\rank X+\rank \Omega\leq |V|$. 
The pair is said to satisfy the {\em strict complementarity condition} if the inequality holds with equality.

The following lemma will be a fundamental tool to analyze the strict complementarity.
Essentially the same statement is given in \cite{c11}.
\begin{lemma}
\label{lem:glueing}
Let $(G,\Sigma)$ be a signed graph, 
and $(G_1,\Sigma_1)$ and $(G_2, \Sigma_2)$ be two signed subgraphs
with $E(G)= E(G_1)\cup E(G_2)$.
Let $c\in [-1,1]^{E(G)}$, and $c_i$ be the restriction of $c$ to $E(G_i)$.
Suppose that there is a strict complementarity pair $(X_i, \Omega_i)$ of ${\rm P}(G_i, \Sigma_i, c_i)$ for each $i=1,2$ such that $X_1[S,S]=X_2[S,S]$, where $S=V(G_1)\cap V(G_2)$.
Then $\Omega_1+\Omega_2$ satisfies the strict complementarity condition  
with any maximum rank solution of ${\rm P}(G,\Sigma,\omega)$ 
(where each $\Omega_i$ is regarded as a matrix of size $|V(G)|\times |V(G)|$ by appending zero columns and zero rows).

Moreover, $\Omega_1+\Omega_2$ satisfies the strict complementarity condition
even in ${\rm P}(G-F,\Sigma\setminus F,\omega)$
for any $F\subseteq E(G)$ such that $\Omega_1[i,j]+\Omega_2[i,j]=0$ for all $ij\in F$.
\end{lemma}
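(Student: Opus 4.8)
The plan is to treat the "moreover" part as the substantive claim and to derive the first assertion as the special case $F=\emptyset$. So let me describe how I would prove that $\Omega := \Omega_1+\Omega_2$ satisfies the strict complementarity condition in ${\rm P}(G-F,\Sigma\setminus F,\omega)$, where $\omega$ is the weight vector with $\Omega = \sum_i \omega(i)E_{ii} + \sum_{ij}\omega(ij)E_{ij}$, and $F$ is any edge set on which the entries of $\Omega$ vanish. First I would check feasibility of $\Omega$ as a dual solution: the sign constraints $\omega(ij)\le 0$ on $E\setminus\Sigma$ and $\omega(ij)\ge 0$ on $E\cap\Sigma$ hold edge-by-edge because each edge of $G$ lies in some $G_i$ with a consistent sign pattern (here one must be slightly careful about parallel edges and edges shared by $G_1$ and $G_2$, but the sign of $\omega$ on a shared edge is a sum of two terms of the same sign, so the constraint survives); and $\Omega = \Omega_1 + \Omega_2 \succeq 0$ since each $\Omega_i\succeq0$ (after padding with zero rows/columns). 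Deleting $F$ only removes constraints whose $\omega$-value is already zero, so $\Omega$ remains feasible for the smaller problem, and the objective value is unchanged; by strict feasibility of the dual, $\Omega$ is dual optimal for ${\rm P}(G-F,\Sigma\setminus F,\omega)$.

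Next I would produce a primal feasible $X$ for ${\rm P}(G-F,\Sigma\setminus F,\omega)$ that is complementary to $\Omega$, and in fact of the right rank. The natural candidate is to glue the Gram representations of $X_1$ and $X_2$: since $X_1[S,S]=X_2[S,S]$ with $S=V(G_1)\cap V(G_2)$, pick Gram maps $p_i:V(G_i)\to\mathbb S^{d_i-1}$ whose restrictions to $S$ agree up to an orthogonal transformation, embed both into a common $\mathbb R^{d_1+d_2-|S|}$ (or $\mathbb S^{d_1+d_2-|S|-1}$) so that the shared coordinates on $S$ coincide and the remaining coordinate blocks of $p_1$ and $p_2$ are mutually orthogonal, and let $X := {\rm Gram}(p)$ for the combined map $p$. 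Then $X[i,i]=1$ for all $i$, $X$ agrees with $X_i$ on $V(G_i)\times V(G_i)$ (because the cross terms between the two private coordinate blocks never both appear), hence $X$ satisfies every edge constraint of $G$ that came from $G_1$ or from $G_2$; in particular it is feasible for ${\rm P}(G-F,\Sigma\setminus F,\omega)$. One also reads off $\rank X = \rank X_1 + \rank X_2 - \rank X_1[S,S]$ from this construction.

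The key step is then the rank/trace bookkeeping. For complementarity I would verify ${\rm Tr}(X\Omega)={\rm Tr}(X\Omega_1)+{\rm Tr}(X\Omega_2)=0$: each term vanishes because $\Omega_i$ is supported on $V(G_i)\cup E(G_i)$ and $X$ restricted there equals $X_i$, and $(X_i,\Omega_i)$ is a complementarity pair; similarly $(X[i,j]-\omega(ij))\,\omega(ij)=0$ edge-by-edge, since on each edge $\omega$ inherits its value and its complementary slack from the $G_i$ it belongs to (on a shared edge both contributions have zero slack). For strict complementarity I need $\rank X+\rank\Omega=|V(G)|$. Here I would use that $(X_i,\Omega_i)$ is strict in ${\rm P}(G_i,\Sigma_i,c_i)$, so $\rank X_i+\rank\Omega_i=|V(G_i)|$, together with $\rank X_1[S,S]+\rank(\text{the }S\text{-block of }\Omega_1+\Omega_2) = |S|$ — i.e. strict complementarity is already inherited on the overlap — and add the two equations, letting the overlap term cancel against $|S|$; this is essentially the clique-sum additivity used in \cite{c11}. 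This last rank identity on $S$ is the part I expect to be the main obstacle, since it requires knowing that the overlap block of each $X_i$ and each $\Omega_i$ are themselves strictly complementary for the induced subproblem on $S$ (equivalently, that the kernels of $X_i$ and $\Omega_i$ restricted to $S$ are complementary subspaces of $\mathbb R^S$); I would extract this from the definition of strict complementarity for $(X_i,\Omega_i)$ by noting that $\Omega_i$ is zero outside $V(G_i)\cup E(G_i)$ while the deleted edges $F$ carry zero $\Omega$-weight, so the combinatorics of which kernel vectors survive is forced. Once that is in hand, the final statement for general $F$ follows verbatim, because nothing above used an edge of $F$ in an essential way — $F$ only dropped inequality constraints that were tight with slack variable $0$, which cannot destroy either optimality of $\Omega$ or the rank count.
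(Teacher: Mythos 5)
Your setup (dual feasibility of $\Omega_1+\Omega_2$, gluing the Gram factors along $S$ into a single $P$ with $\rank X=\rank X_1+\rank X_2-\rank X_1[S,S]$, and trace complementarity) matches the paper and is fine. The gap is exactly where you flagged it, and the identity you propose to close it with is false. You claim $\rank X_1[S,S]+\rank\bigl((\Omega_1+\Omega_2)[S,S]\bigr)=|S|$, i.e.\ that strict complementarity is ``inherited on the overlap.'' Take $V(G_1)=\{1,2\}$, $V(G_2)=\{2,3\}$, $S=\{2\}$, each $G_i$ a single even edge with $c\equiv 1$, $X_i$ the all-ones $2\times 2$ matrix and $\Omega_i=\left(\begin{smallmatrix}1&-1\\-1&1\end{smallmatrix}\right)$; each pair is strictly complementary and $X_1[S,S]=X_2[S,S]$. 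Then $\rank X_1[S,S]=1$ and $(\Omega_1+\Omega_2)[S,S]=(2)$ has rank $1$, so your identity reads $1+1=1$. (The lemma's conclusion still holds: $X$ is the all-ones $3\times3$ matrix of rank $1$ and $\Omega_1+\Omega_2$ is the path Laplacian of rank $2$.) The same example refutes your parenthetical reformulation that the kernels of $X_i$ and $\Omega_i$ restricted to $S$ are complementary in $\mathbb{R}^S$. Moreover, even granting some overlap identity, your cancellation tacitly uses $\rank(\Omega_1+\Omega_2)=\rank\Omega_1+\rank\Omega_2-\rank\bigl((\Omega_1+\Omega_2)[S,S]\bigr)$, which also fails here ($2\neq 1+1-1$).

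The correct bookkeeping is $\rank(\Omega_1+\Omega_2)=\rank\Omega_1+\rank\Omega_2-\dim(\im\Omega_1\cap\im\Omega_2)$ (valid because both summands are PSD), and what must be shown is $\dim(\im\Omega_1\cap\im\Omega_2)=|S|-\rank X_1[S,S]$ --- note $\im\Omega_1\cap\im\Omega_2\subseteq\mathbb{R}^S$, but this intersection is not the image of the principal submatrix $(\Omega_1+\Omega_2)[S,S]$. Proving this is the actual content of the lemma, and it is where the paper works: it shows $\ker(\Omega_1+\Omega_2)$ equals the row space of the glued factor $P$ by taking any $x\in\ker(\Omega_1+\Omega_2)$, using $\Omega_1,\Omega_2\succeq 0$ to deduce that each restriction $x_i$ lies in $\ker\Omega_i=\mathrm{rowspace}(P_i)$ (this is where strict complementarity of $(X_i,\Omega_i)$ enters), and using the row-independence of the common block $P_S$ to see that the two representations agree on $S$ and hence concatenate to exhibit $x$ in the row space of $P$. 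Your proposal never performs this gluing of kernel representations, so the central step is missing; the rest of your argument (including the reduction of the ``moreover'' part to dropping constraints with zero multiplier) is sound.
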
  
\begin{proof}
Suppose that $\Omega_1[i,j]+\Omega_2[i,j]=0$ for all $ij\in F\subseteq E(G)$.
Denote $(G', \Sigma')=(G-F, \Sigma\setminus F)$ and let $c'$ be the restriction of $c$ to $F$.
Then $\Omega_1+\Omega_2$ is a dual feasible solution of  ${\rm P}(G',\Sigma', c')$.
We show that $\Omega_1+\Omega_2$ satisfies a strict complementarity condition  with any maximum rank solution of ${\rm P}(G',\Sigma', c')$.

Let $X_i=P_i^{\top} P_i$ be a Gram matrix representation of $X_i$, where each $P_i$ is row-independent.
Since $X_1[S,S]=X_2[S,S]$, each $P_i$ can be expressed as 
$P_i=\begin{array}{|c|c|} 
\hline
 \multirow{2}{*}{$\tilde{P}_i$} & 0 \\  \cline{2-2}
 & P_S  \\\hline
\end{array}$
for some row-independent matrix $P_S$ of size $d\times |S|$.
We concatenate $P_1$ and $P_2$ (by changing the row ordering of $P_2$ appropriately) such that 
\begin{equation}
\label{eq:sum}
P=\begin{array}{|c|c|c|} 
 \multicolumn{1}{r}{V(G_1)\setminus S} &  \multicolumn{1}{r}{S} &  \multicolumn{1}{r}{V(G_2)\setminus S } \\
\hline
 \multirow{2}{*}{$\tilde{P}_1$} & 0 & 0 \\  \cline{2-3}
 & P_S &   \multirow{2}{*}{$\tilde{P}_2$}  \\\cline{1-2}
 0 & 0 &\\ \hline
\end{array}.
\end{equation}
Then $P^{\top} P$ forms a feasible solution of ${\rm P}(G',\Sigma', c')$ by $E(G')\subseteq E(G_1)\cup E(G_2)$.

We show that $P^{\top} P$ and $\Omega_1+\Omega_2$ satisfy the strict complementarity condition.
Clearly $\langle P^{\top}P, \Omega_1+\Omega_2\rangle=0$, implying 
$\rank P^{\top} P\leq \dim \ker (\Omega_1+\Omega_2)$.
To show the opposite direction, take any $x\in \ker (\Omega_1+\Omega_2)\subseteq \mathbb{R}^{V(G)}$.
Note that $x$ restricted to $\mathbb{R}^{V(G_i)}$ is in $\ker \Omega_i$.
This implies that, denoting the restriction of $x$ to $\mathbb{R}^{V(G_i)}$ by $x_i$, 
$x_i$ is spanned by the rows of $P_i$.
Since $P_S$ is row independent, $x$ restricted to $\mathbb{R}^S$ is uniquely represented as a linear combination of the row vectors of $P_S$.
Hence, it follows from (\ref{eq:sum}) that the representation of $x_1$ as a linear combination of the row vectors of $P_1$ can be concatenated with that of $x_2$ as a linear combination of the rows of $P_2$ so that $x$ is represented as a  linear combination of the rows of $P$. In other words $x$ is spanned by rows of $P$,
meaning that $\rank P^{\top} P=\rank P\geq \dim \ker (\Omega_1+\Omega_2)$. 
\end{proof}

\subsection{Odd-$K_4$ minor free signed graphs}
\label{subsec:signed}
A signed graph $(G,\Sigma)$ is a pair of an undirected graph $G$ (which may contain parallel edges) and 
$\Sigma\subseteq E(G)$.
An edge in $\Sigma$ (resp.~in $E(G)\setminus \Sigma$) is called {\it odd} (resp.~even),
and a cycle (or a path) is said to be {\it odd} (resp. {\it even}) if the number of odd edges in it is odd (resp.~even).

The {\em resigning} on  $X\subseteq V$ changes $(G,\Sigma)$ with $(G,\Sigma\Delta \delta(X))$, where 
$A \Delta B:=(A\setminus B)\cup (B\setminus A)$ for any two sets $A, B$.
Two signed graphs are said to be {\em (sign) equivalent} if they can be converted to each other by a series of resigning operations.
A signed graph is called a {\it minor} of $(G,\Sigma)$ if it can be obtained by a sequence of the following three operations:
(i) the removal of an edge,
(ii) the contraction of an even edge, and
(iii) resigning.
We say that $(G,\Sigma)$ is {\em $(H, \Sigma')$ minor free}  if $(H, \Sigma')$ is not a minor of $(G,\Sigma)$.
Similarly $(H, \Sigma')$ is called an {\em odd-subdivision} of $(G,\Sigma)$ if it can be obtained from $(G, \Sigma)$ by subdividing even edges  and resigning.

A signed graph $(G,\Sigma)$ is said to be {\em bipartite} if it has no odd cycle, equivalently it is equivalent to $(G,\{\emptyset\})$.

For an undirected graph $H$,  signed graph $(H, E(H))$ is called  {\em odd-$H$}.
Also the signed graph obtained from $H$ by replacing each edge with two parallel edges with distinct signs is called {\em odd-$H^2$}.
We will frequently encounter odd-$K_4$ and odd-$K_3^2$, which are illustrated in Figure~\ref{fig:1}.

\begin{figure}
\centering
\begin{minipage}{0.45\textwidth}
\centering
\includegraphics[scale=0.6]{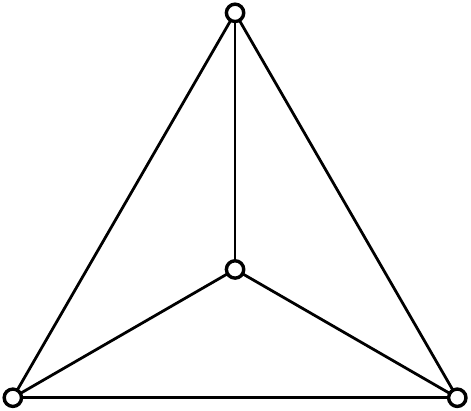}
\par(a)
\end{minipage}
\begin{minipage}{0.45\textwidth}
\centering
\includegraphics[scale=0.6]{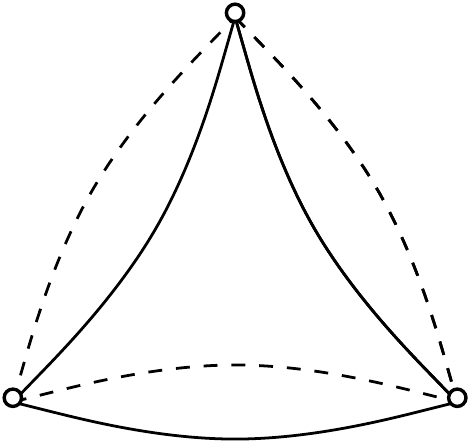}
\par (b)
\end{minipage}
\caption{Signed graphs: (a) odd-$K_4$ and (b) odd-$K_3^2$, where the even edges are dotted.}
\label{fig:1}
\end{figure}

The proof of the main theorem relies on a structural theorem of odd-$K_4$ minor free graphs. 
To see this we introduce one more notation.
Let $E_1$ and $E_2$ be nonempty subsets of $E(G)$ that partition $E(G)$,
and let $\tilde{G}_i=(V(E_i), E_i)$.
Suppose that $|V(E_1)\cap V(E_2)|=2$, $|V(E_1)|\geq 3$, $|V(E_2)|\geq 3$, and 
each $(\tilde{G}_i, \Sigma\cap E_i)$ is connected and non-bipartite.
Let $(G_i, \Sigma_i)$ be the union of $(\tilde{G}_i, \Sigma\cap E_i)$ and the odd-$K_2^2$ on the two vertices of $V(G_1)\cap V(G_2)$.
Then $(G_1, \Sigma_1)$ and $(G_2, \Sigma_2)$  are said to form a {\em strong 2-split} of $(G,\Sigma)$, and each $(G_i, \Sigma_i)$ is called the {\em parts} of the strong 2-split.
See Figure~\ref{fig:2split}.

\begin{figure}
\centering
\includegraphics[scale=0.7]{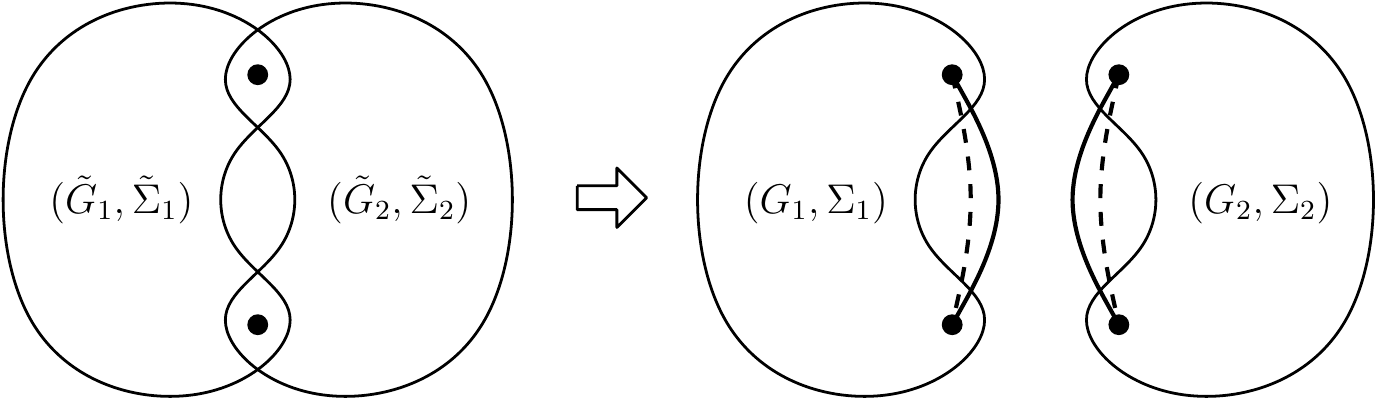}
\caption{2-split}
\label{fig:2split}
\end{figure}

The following theorem was observed by Lov{\'a}sz and Schrijver (see \cite[Theorem 3.2.5]{g}).
\begin{theorem}[Lov{\'a}sz and Schrijver]
\label{thm:structural}
Suppose that $(G,\Sigma)$ is odd-$K_4$ minor free.
Then one of the following holds.
\begin{description}
\setlength{\parskip}{0.1cm} 
 \setlength{\itemsep}{0.1cm}   
\item[(i)] $(G,\Sigma)$ has a cut vertex or a strong 2-split.
\item[(ii)] $(G,\Sigma)$ is equivalent to the odd-$K_3^2$.
\item[(iii)] $(G,\Sigma)$ is odd-$K_3^2$ minor free.
\end{description}
\end{theorem}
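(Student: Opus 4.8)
The plan is to strip off the trivial cases, reduce to a $2$-connected non-bipartite signed graph without repeated same-sign edges, and then analyse where an odd-$K_3^2$ minor sits, arranging that every remaining possibility produces \emph{either} a strong $2$-split \emph{or} an odd-$K_4$ minor; since the latter is excluded by hypothesis, only the three stated alternatives survive.

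First I would carry out the reductions. If $G$ is disconnected, treat its components separately; if $G$ has a cut vertex, (i) holds. So assume $(G,\Sigma)$ is $2$-connected. If $(G,\Sigma)$ is bipartite, resign so that $\Sigma=\emptyset$; edge deletion and even-edge contraction keep $\Sigma=\emptyset$ and resigning preserves the absence of odd cycles, so every minor is bipartite, whereas odd-$K_3^2$ is not, and (iii) holds. Hence we may assume $(G,\Sigma)$ is non-bipartite, and --- by deleting one of any two parallel edges of equal sign, which affects none of the alternatives (i)--(iii) --- that the only parallel edges are odd digons. It then remains to show: a $2$-connected non-bipartite odd-$K_4$ minor free signed graph $(G,\Sigma)$ that has an odd-$K_3^2$ minor but no strong $2$-split is equivalent to odd-$K_3^2$.

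To prove this, suppose it fails, take a counterexample with $|V(G)|+|E(G)|$ minimum, and fix a model of an odd-$K_3^2$ minor of minimum total size: three pairwise disjoint connected ``bags'' $B_1,B_2,B_3$ together with six edges of $G$, two joining each pair of bags, whose contraction becomes odd-$K_3^2$ after resigning. Two escape hatches drive the analysis. \emph{(a) Small separations give strong $2$-splits.} If a pair of vertices $\{u,v\}$ separates $E(G)$ into two parts each still meeting at least two of the bags --- for instance, if some bag with its four incident model edges meets the rest of $G$ in only two vertices --- then both parts are connected and non-bipartite, and adjoining the odd-$K_2^2$ on $\{u,v\}$ gives a strong $2$-split, a contradiction; so no such separation exists. \emph{(b) Surplus gives odd-$K_4$.} With (a) ruled out, $2$-connectivity and minimality of the model leave only two cases: either $G$ is nothing more than the odd-$K_3^2$ skeleton, with bags single vertices and no further edge, so that $(G,\Sigma)$ is equivalent to odd-$K_3^2$; or there is a fourth ``branch vertex'' $w$ joined to the branch vertices $v_1,v_2,v_3$ by pairwise disjoint paths. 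In the latter case, since for each pair $v_iv_j$ we may freely pick the parity of one of the two parallel model edges, a short parity count shows that after resigning all four triangles on $\{w,v_1,v_2,v_3\}$ can be made odd simultaneously, exhibiting an odd-$K_4$ minor --- impossible. Hence $(G,\Sigma)$ is equivalent to odd-$K_3^2$, contradicting that it was a counterexample.

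The step I expect to be the main obstacle is making (b) rigorous: when a surplus vertex $w$ fails to reach all three of $v_1,v_2,v_3$ along pairwise disjoint paths, one must show that $2$-connectivity either still supplies such paths or exposes a $2$-cut that feeds back into (a) --- a Menger-type case analysis on how the attachments of $w$ interleave with the odd-$K_3^2$ skeleton, which is the technical core of the argument. A pervasive secondary point is checking, in (a), that \emph{both} sides of each separation are non-bipartite, so that the split is genuinely \emph{strong} and, in a minimal-counterexample setup, that the resulting pieces are strictly smaller and still meet the hypotheses; this is exactly where $2$-connectivity and the placement of the model's odd cycles are used repeatedly. A complete proof along these lines, due to Lov{\'a}sz and Schrijver, is given in \cite[Theorem 3.2.5]{g}.
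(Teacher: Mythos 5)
The paper does not prove this statement at all: it is quoted as a known result of Lov\'asz and Schrijver with a pointer to \cite[Theorem 3.2.5]{g}, so there is no in-paper argument to compare yours against. Judged on its own, your write-up is a sensible outline of the standard minimal-counterexample strategy (reduce to the $2$-connected non-bipartite case, fix a smallest odd-$K_3^2$ model, and show that any deviation from the bare skeleton yields either a strong $2$-split or an odd-$K_4$ minor), and the parity count in (b) is correct as far as it goes: choosing $\epsilon_{ij}=1+a_i+a_j$ for the $v_iv_j$-connections forces all three triangles through $w$ to be odd and automatically makes $v_1v_2v_3$ odd as well, since $\epsilon_{12}+\epsilon_{13}+\epsilon_{23}\equiv 1 \pmod 2$.

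However, the two places you yourself flag as obstacles are genuine gaps, not routine verifications, and they are exactly where the content of the theorem lives. In (a), a $2$-separation $\{u,v\}$ with each side meeting two bags does \emph{not} obviously have both sides non-bipartite: a bag of an odd-$K_3^2$ model can be a tree, and the odd cycles of the model may all cross the separation, so the resulting split need not be \emph{strong}; one has to argue that a bipartite side can be absorbed or that the separation can be rechosen, which is a real step. In (b), the parity bookkeeping presupposes that for each pair $\{i,j\}$ you can realize a $v_i$--$v_j$ path of \emph{prescribed} parity that is internally disjoint from the other five branch paths; since the two candidate connections between $B_i$ and $B_j$ run through the same bags as the paths to $B_k$ and to $w$, arranging six pairwise internally disjoint paths with the right parities (and handling the case where $w$'s attachments do not split cleanly into three disjoint paths) is the Menger-type core you defer. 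As it stands, the proposal is a plausible plan that terminates in a citation to the very reference the paper uses; it does not constitute an independent proof.
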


\subsection{Projection of the elliptope and the metric polytope}
\label{subsec:metric}

The {\em metric polytope} of an undirected graph $G$ is defined by
\begin{equation*}
{\rm MET}(G)=
\left\{x\in [0,1]^{E} \ \Bigg| \ \sum_{e\in E(C)\setminus F} x(e) - \sum_{e\in  F} x(e)
\geq 1-|F| : 
\begin{array}{cc} \text{$\forall$ cycle $C$ in $G$} \\ \text{$\forall F\subseteq C$: $|F|$ is odd} \end{array}
\right\}.
\end{equation*}
Throughout the paper we set ${\rm arccos}: [-1,1]\rightarrow [0, \pi]$ (so that ${\rm arccos}$ is bijective). Then we have the following.
\begin{theorem}[Laurent~\cite{l97}]
\label{thm:met}
For a graph $G$, ${\rm arccos}({\cal E}(G))/\pi\subseteq {\rm MET}(G)$, with equality if and only  if $G$ is $K_4$-minor free.
\end{theorem}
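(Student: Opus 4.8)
The plan is to prove the two halves separately: the inclusion ${\rm arccos}(\mathcal{E}(G))/\pi \subseteq {\rm MET}(G)$ holds for every graph, and the reverse inclusion holds exactly when $G$ is $K_4$-minor free.

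First I would establish the inclusion. Take any $c \in \mathcal{E}(G)$ and a Gram representation: a map $p: V \to \mathbb{S}^{d-1}$ with $p(i) \cdot p(j) = c(ij)$ for all $ij \in E$. Set $x(e) = {\rm arccos}(c(e))/\pi \in [0,1]$, so $\pi x(ij)$ is the geodesic angle between $p(i)$ and $p(j)$ on the sphere. The key observation is that this angular distance is a \emph{metric} on the sphere (it satisfies the triangle inequality), and moreover it is a metric taking values in $[0,\pi]$, which is exactly what is needed for the ``odd'' or ``negative-type-flavored'' cycle inequalities defining ${\rm MET}(G)$. Concretely, for a cycle $C$ and an odd subset $F \subseteq E(C)$, one resigns: replacing $p(i)$ by $-p(i)$ on a suitable vertex subset flips the sign of exactly the edges in a cut, and since $|F|$ is odd, after resigning along an appropriate cut the inequality $\sum_{e \in E(C)\setminus F} x(e) - \sum_{e \in F} x(e) \geq 1 - |F|$ becomes an ordinary statement that the angular ``length'' around a cycle is at least... — more precisely, it reduces to summing triangle inequalities $\pi x(e_1) \le \pi x(e_2) + \dots$ along the cycle. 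So this direction follows from the triangle inequality for geodesic distance on $\mathbb{S}^{d-1}$ together with a resigning/sign-flip argument to handle the set $F$. This is the routine half.

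For the reverse inclusion when $G$ is $K_4$-minor free, I would proceed by induction on the structure of $K_4$-minor free (series-parallel) graphs. Recall $G$ is $K_4$-minor free iff every block is obtained from a single edge by repeated series and parallel extensions (equivalently, $G$ has treewidth at most $2$). Given $x \in {\rm MET}(G)$, I want to produce $c \in \mathcal{E}(G)$ with ${\rm arccos}(c)/\pi = x$, i.e., a spherical embedding $p: V \to \mathbb{S}^{d-1}$ realizing all the prescribed angles. The base case is a single edge, trivially realizable in $\mathbb{S}^1$. For the parallel step (two edges with the same endpoints), consistency of ${\rm MET}$ forces the two prescribed values to be compatible via the length-two cycle inequalities, which say exactly $|x(e_1) - x(e_2)| \le $ (constraint) and $x(e_1) + x(e_2) \le$ (constraint) — but parallel edges with distinct $x$-values cannot be simultaneously realized unless... so one checks the ${\rm MET}$ inequalities force $x(e_1) = x(e_2)$ here, reducing to one edge. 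For the series step, one has a degree-two vertex $v$ with neighbors $a, b$; realize the graph $G - v$ inductively in some $\mathbb{S}^{d-1}$, then the point $p(v)$ must lie on $\mathbb{S}^{d-1}$ at angle $\pi x(va)$ from $p(a)$ and $\pi x(vb)$ from $p(b)$. Such a point exists in one dimension up (i.e., in $\mathbb{S}^d$) precisely when the two angles and the angle $\theta$ between $p(a)$ and $p(b)$ satisfy the spherical triangle inequalities $|\,\pi x(va) - \pi x(vb)\,| \le \theta \le \pi x(va) + \pi x(vb)$ and $\pi x(va) + \pi x(vb) + \theta \le 2\pi$; but the first of these is exactly the triangle inequality from the $3$-cycle $v a b$ (split into the three odd subsets), and the second is the cycle inequality with $F = \emptyset$ on that triangle — and the crucial point is that $\theta$ itself, being realized, lies in $[0,\pi]$, so $\theta \le \pi x(va) + \pi x(vb)$ combined with $\pi x(va), \pi x(vb) \le \pi$ gives the $2\pi$ bound. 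So the series extension always succeeds, possibly at the cost of one extra dimension; iterating over the $O(|V|)$ series-parallel construction steps gives a realization in bounded dimension.

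For the ``only if'' direction I would exhibit, for $G$ containing a $K_4$-minor, a point $x \in {\rm MET}(G)$ that is not in ${\rm arccos}(\mathcal{E}(G))/\pi$. It suffices to do this for $G = K_4$ itself (and then pull back along the minor operations, checking that $\mathcal{E}$ and ${\rm MET}$ both behave monotonically under edge deletion and contraction — contraction corresponding to identifying two antipodal-free points, as in the nondegeneracy footnote). For $K_4$ one takes the point $x \equiv 1/2$ on all six edges: this satisfies every triangle inequality with equality-ish slack and lies in ${\rm MET}(K_4)$, but $x \equiv 1/2$ corresponds to $c \equiv 0$, i.e., a requirement that four unit vectors be pairwise orthogonal — impossible in... no: four pairwise orthogonal unit vectors \emph{are} realizable in $\mathbb{R}^4$, so $c \equiv 0$ is actually in $\mathcal{E}(K_4)$. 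Hence I must instead choose the known separating point: take one edge to have $x$-value strictly between $0$ and the triangle-bound and the others tuned so that the three facet-defining triangle inequalities of ${\rm MET}(K_4)$ are all satisfied but no consistent spherical embedding exists — this corresponds to the fact that ${\rm arccos}(\mathcal{E}(K_4))/\pi$ is a \emph{strict} subset cut out by an extra non-polyhedral (a determinantal/Cayley–Menger-type) inequality on the Gram matrix being PSD, which for four points on a sphere is genuinely nonlinear. Concretely one uses that $X \succeq 0$ for a $4\times 4$ correlation matrix is equivalent to $\det X \ge 0$ together with the $3\times 3$ minors, and the $\det X \ge 0$ constraint is not implied by the metric (triangle) constraints; picking $x$ violating only the determinantal constraint finishes it.

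\textbf{Main obstacle.} The genuinely nontrivial part is the reverse inclusion for $K_4$-minor free $G$ — specifically, controlling the spherical realization through the series-parallel decomposition and verifying that the ${\rm MET}(G)$ inequalities are \emph{exactly} the obstructions (no hidden determinantal constraint survives when $G$ has no $K_4$-minor). The clean way to see this is the ``only if'' contrapositive above, but turning it into an ``if'' requires the inductive spherical-embedding argument to never get stuck, which hinges on the spherical triangle inequality analysis (the $[0,\pi]$-range of already-realized angles is what saves the $\le 2\pi$ condition for free). I expect the author's proof to either invoke a projection/polytope duality argument or to run essentially this series-parallel induction.
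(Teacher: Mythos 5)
The paper does not prove this statement---it is quoted from Laurent's paper \cite{l97}---so I am judging your argument on its own terms, with the paper's signed analogues (Lemma~\ref{lem:met_poly}, the path reduction of Section~\ref{subsub:path}, and the $K_4$ example in the proof of Theorem~\ref{thm:oddK4projection}) as the reference points. Your forward inclusion is essentially correct once made precise: walking around the cycle and flipping the sign of the current point each time an edge of $F$ is crossed, the odd parity of $|F|$ forces the walk to end at $-p(v_1)$, and the triangle inequality for geodesic distance gives $\sum_{e\notin F}\arccos(c(e))+\sum_{e\in F}(\pi-\arccos(c(e)))\ge d(p(v_1),-p(v_1))=\pi$, which is the metric inequality.

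The genuine gap is in your series step. You realize $G-v$ inductively and then try to insert $p(v)$ at the prescribed angles from $p(a)$ and $p(b)$; this needs the realized angle $\theta=\arccos(p(a)\cdot p(b))$ to satisfy spherical compatibility conditions, which you justify by the cycle inequalities on the triangle $vab$. But $ab$ need not be an edge of $G$, so $vab$ need not be a cycle of $G$, and $\theta$ is then completely uncontrolled by $x$. Concretely, for the $4$-cycle $a\,v\,b\,w\,a$ with $x(av)=x(vb)=0$ and $x(aw)=x(wb)=1/2$ (which lies in ${\rm MET}$), the path $a$--$w$--$b$ admits the realization $p(w)=(1,0)$, $p(a)=(0,1)$, $p(b)=(0,-1)$, and then no $p(v)$ exists; the induction hypothesis hands you \emph{a} realization, not an extendable one. (Your claim that the $\le 2\pi$ perimeter bound comes ``for free'' also fails when $x(va)+x(vb)>1$.) The repair is exactly the path-reduction device the paper develops in Section~\ref{subsub:path}: replace the path $a\,v\,b$ by a single edge $ab$ carrying the prescribed value $\arccos(c'(ab))=\arccos(c(av))+\arccos(c(vb))$ (folded back into $[0,\pi]$ if the sum exceeds $\pi$), \emph{prove} that the reduced vector still lies in ${\rm MET}$ of the reduced graph, realize that graph, and only then place $p(v)$ on the geodesic (or antipodal geodesic) between $p(a)$ and $p(b)$. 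Separately, your necessity argument never lands: you correctly discard $x\equiv 1/2$ but end with ``pick $x$ violating only the determinantal constraint.'' The standard witness, recorded in the paper's proof of Theorem~\ref{thm:oddK4projection}, is $c(ij)=-1/2$ for all $ij$, i.e.\ $x\equiv 2/3$: every cycle inequality holds (the triangle inequalities with $|F|=3$ are tight), yet $\tfrac32 I-\tfrac12 J$ has eigenvalue $-1/2$. A last cosmetic point: the perimeter inequality $\sum_{e\in C}x(e)\le |C|-1$ on a triangle is the case $|F|=3$, not $F=\emptyset$, which is even and hence not an admissible choice in the definition of ${\rm MET}(G)$.
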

See \cite{dl} for more details on the metric polytope.

Suppose that we are given a signed graph $(G,\Sigma)$.
Each signing $\Sigma$ defines  a {\em sign function} $\sigma:E(G)\rightarrow \{-1, +1\}$ such that $\sigma(e)=-1$ if and only if $e\in \Sigma$.
We define a signed version of the metric polytope as follows.
\begin{equation}
\label{eq:met}
{\rm MET}(G,\Sigma)=
\left\{x\in [0,1]^E \ \Bigg| \ \sum_{e\in E(C)} \sigma(e)x(e) 
\geq 1-|E(C)\cap \Sigma| : 
\forall \text{odd cycle $C$ in $(G,\Sigma)$ }
\right\}.
\end{equation}

Note that ${\rm MET}(G)\subseteq {\rm MET}(G,\Sigma)$.
Note also that (a projection of) the metric polytope of the odd-$G^2$ coincides with ${\rm MET}(G)$.
\begin{lemma}
\label{lem:met_poly}
For a signed graph $(G,\Sigma)$, 
${\rm arccos}({\cal E}(G,\Sigma))/\pi\subseteq {\rm MET}(G,\Sigma)$.
\end{lemma}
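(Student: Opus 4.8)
The plan is to show that for any feasible $c\in{\cal E}(G,\Sigma)$, the point $x:={\rm arccos}(c)/\pi$ satisfies every odd-cycle inequality defining ${\rm MET}(G,\Sigma)$, and that $x\in[0,1]^E$. The latter is immediate since ${\rm arccos}$ maps $[-1,1]$ onto $[0,\pi]$. So fix an odd cycle $C$ in $(G,\Sigma)$, say on vertices $v_1,\dots,v_k$ (cyclically), and let $X$ be a feasible solution of ${\rm P}(G,\Sigma,c)$ with Gram representation $p:V\to\mathbb{S}^{d-1}$, so $X[i,j]=p(i)\cdot p(j)$. For each edge $e=v_\ell v_{\ell+1}$ of $C$ put $\theta(e):={\rm arccos}(p(v_\ell)\cdot p(v_{\ell+1}))\in[0,\pi]$, the spherical (geodesic) distance between the two unit vectors. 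The feasibility constraints give $X[i,j]\ge c(ij)$ on even edges and $X[i,j]\le c(ij)$ on odd edges; since ${\rm arccos}$ is decreasing, this translates to $\theta(e)\le\pi x(e)$ for even $e$ and $\theta(e)\ge\pi x(e)$ for odd $e$.

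The core geometric input is the parity-refined triangle inequality for spherical distance: going around a closed walk on the sphere, the total geodesic length is at least the length of any single "shortcut" — but here we need the sharper cyclic version used by Laurent, namely that for points $p(v_1),\dots,p(v_k)$ on a sphere and any subset $F$ of the cycle edges with $|F|$ odd,
\[
\sum_{e\in E(C)\setminus F}\theta(e)\;-\;\sum_{e\in F}\theta(e)\;\ge\;(1-|F|)\pi .
\]
This is exactly the statement that the angles $\theta(e)/\pi$ lie in the metric polytope of the cycle $C_k$ (a $K_4$-minor-free graph), which is a known consequence of Theorem~\ref{thm:met} applied to the cycle, or can be proved directly by induction on $k$ using the elementary spherical triangle inequalities $|\theta(e)-\theta(f)|\le\theta(e\cup f)$ and $\theta(e)+\theta(f)\ge\theta(e\cup f)$ together with $\theta\le\pi$ on each edge. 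Now choose $F:=E(C)\cap\Sigma$; since $C$ is an odd cycle, $|F|$ is odd, so the displayed inequality applies. Substituting $\theta(e)\le\pi x(e)$ for $e\notin F$ (even edges) and $\theta(e)\ge\pi x(e)$ for $e\in F$ (odd edges) — note both substitutions push the left side of the displayed inequality in the direction of making it harder to satisfy, i.e. they only decrease the left-hand side — we get
\[
\pi\!\!\sum_{e\in E(C)\setminus\Sigma}\!\!x(e)\;-\;\pi\!\!\sum_{e\in E(C)\cap\Sigma}\!\!x(e)\;\ge\;(1-|E(C)\cap\Sigma|)\pi,
\]
which, dividing by $\pi$ and recalling $\sigma(e)=+1$ on even edges and $\sigma(e)=-1$ on odd edges, is precisely the defining inequality $\sum_{e\in E(C)}\sigma(e)x(e)\ge 1-|E(C)\cap\Sigma|$ of ${\rm MET}(G,\Sigma)$. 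Hence $x\in{\rm MET}(G,\Sigma)$.

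The only real obstacle is the displayed cyclic spherical inequality with the odd subset $F$; everything else is bookkeeping with the monotonicity of ${\rm arccos}$ and the parity of $|E(C)\cap\Sigma|$. One clean way to dispatch it is to invoke the unsigned Laurent theorem (Theorem~\ref{thm:met}) for the cycle graph $C_k$: the Gram matrix of $p(v_1),\dots,p(v_k)$ lies in ${\cal E}_k$, hence its image under $\pi_{C_k}$, after ${\rm arccos}/\pi$, lies in ${\rm MET}(C_k)$, whose defining inequalities are exactly the ones above. Alternatively — and this is perhaps more in the spirit of a self-contained preliminary lemma — one proves it by induction on $k$: the base case $k=2$ (two parallel edges, $|F|=1$) is $\theta(e_1)-\theta(e_2)\ge -\pi$ and $\theta(e_2)-\theta(e_1)\ge-\pi$, trivially true; the inductive step contracts one edge of $C$ using the spherical triangle inequality in the appropriate ($F$-dependent) orientation, keeping track of how $|F|$ changes. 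I would write it up via the first route for brevity, citing Theorem~\ref{thm:met}, and relegate the direct induction to a remark.
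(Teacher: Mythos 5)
Your proof is correct and follows essentially the same route as the paper: pass to a Gram representation $p$ of a feasible $X$, invoke the unsigned containment ${\rm arccos}({\cal E})/\pi\subseteq {\rm MET}$ from Theorem~\ref{thm:met}, and transfer the resulting cycle inequalities to $c$ using the monotonicity of ${\rm arccos}$ together with the direction of the constraint on each even/odd edge (the paper applies Theorem~\ref{thm:met} to the whole graph $G$ with weights $c'(ij)=p(i)\cdot p(j)$ rather than to each cycle separately, but that difference is immaterial). The only blemish is in your dispensable aside sketching a direct induction: for $k=2$ with $|F|=1$ the right-hand side is $(1-|F|)\pi=0$, not $-\pi$, though the inequality still holds there because parallel edges join the same pair of points and hence have equal angles.
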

\begin{proof}
Suppose that $c\in {\cal E}(G,\Sigma)$.
Let $X$ be a feasible solution of ${\rm P}(G,\Sigma,c)$, 
and take $p:V(G)\rightarrow \mathbb{S}^d$ such that  $X={\rm Gram}(p)$.
Then $c'\in \mathbb{R}^E$ defined by $c'({ij})=p(i)\cdot p(j)$ for $ij\in E$ should be in ${\cal E}(G)$. Thus ${\rm arccos}(c')/\pi \in {\rm MET}(G)$ by Theorem~\ref{thm:met}.
Observe also that $\sigma(e){\rm arccos}(c(e))\geq \sigma(e){\rm arccos}(c'(e))$ for any $e\in E$. 
Therefore we get $\sum_{e\in E(C)}\sigma(e){\rm arccos}(c(e))\geq \sum_{e\in E(C)} \sigma(e){\rm arccos}(c'(e))\geq (1-|E(C)\cap \Sigma|)\pi$ for any odd cycle $C$,
meaning that ${\rm arccos}(c)/\pi\in {\rm MET}(G,\Sigma)$.
\end{proof}

For simplicity, we use
\[
{\rm val}(H, x):=\sum_{e\in E(H)\setminus \Sigma} x(e)+\sum_{e\in E(H)\cap \Sigma} (1-x(e))
\]
 for each subgraph $H$ in a signed graph $(G,\Sigma)$. 
Note that $x\in {\rm MET}(G,\Sigma)$ if and only if ${\rm val}(C, x)\geq 1$ for every odd cycle $C$. 
If $x$ is clear from the context, we simply denote ${\rm val}(H)$.


\section{Characterizing ${\cal E}(G,\Sigma)$}
\label{sec:main}

Let $(G,\Sigma)$ be a signed graph  and $c\in [-1,1]^E$.
We say that an edge $e$ is {\em degenerate (with respect to $c$)} if 
\[
c(e)=\sigma(e),
\]
and that $c$ is {\em nondegenerate} if  none of the edges are degenerate, i.e., 
$c\in (-1, 1]^{\Sigma}\times  [-1,1)^{E\setminus \Sigma}$.
An odd cycle $C$ is  called {\em tight} (with respect to $c$) if 
\[{\rm val}(C,  {\rm arccos}(c)/\pi)=1.\]
An edge is  said to be {\em tight} if it is contained in some tight odd cycle,
and  called {\em strictly tight} if it is contained in a tight odd cycle of length at least three.

We are ready to state our main theorem.

\begin{theorem}
\label{thm:oddK4}
Let $(G,\Sigma)$ be an odd-$K_4$ minor free signed graph and  
$c$ be nondegenerate.
If ${\rm arccos}(c)/\pi\in {\rm MET}(G,\Sigma)$, then 
${\rm P}(G,\Sigma,c)$ is feasible and there is a dual solution which is supported on strictly tight edges and satisfies  the strict complementarity condition with a maximum rank solution of 
${\rm P}(G,\Sigma, c)$.
\end{theorem}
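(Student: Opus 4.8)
The plan is to prove Theorem~\ref{thm:oddK4} by induction on $|E(G)|$, following the case distinction provided by the structural theorem of Lov\'asz and Schrijver (Theorem~\ref{thm:structural}), and gluing partial certificates together using Lemma~\ref{lem:glueing}. The base cases are the two ``irreducible'' situations in Theorem~\ref{thm:structural}: $(G,\Sigma)$ equivalent to odd-$K_3^2$, and $(G,\Sigma)$ odd-$K_3^2$ minor free (together with trivial small cases such as a single edge or a single odd cycle). In each base case I would explicitly exhibit a Gram representation $p:V\to\mathbb{S}^{d-1}$ realizing the weights $c$ with $d\le 2$ in the odd-$K_3^2$-minor-free case and $d\le 3$ when odd-$K_3^2$ is allowed --- this is exactly the low-dimensional embeddability claim advertised in the introduction --- and simultaneously a dual solution $\Omega$ supported on the strictly tight edges with $\rank X+\rank\Omega=|V|$. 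For a single odd cycle this is a direct computation: place the points on a great circle (a one-parameter family on $\mathbb{S}^1$), the MET inequality ${\rm val}(C,{\rm arccos}(c)/\pi)\ge 1$ is precisely the triangle-type condition guaranteeing such a placement exists, tightness corresponds to all of the cycle lying in a half-circle, and the dual certificate is the classical ``stressed cycle'' rank-one-deficient matrix.

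For the inductive step I would handle the two reductions in case (i) separately. If $(G,\Sigma)$ has a cut vertex $v$, split $G$ into the blocks $G_1,G_2$ sharing only $v$; apply induction to each (the hypothesis ${\rm arccos}(c)/\pi\in{\rm MET}(G_i,\Sigma_i)$ is inherited since odd cycles of $G_i$ are odd cycles of $G$), obtain strict complementarity pairs $(X_i,\Omega_i)$, note $X_1[\{v\},\{v\}]=X_2[\{v\},\{v\}]=1$ trivially match, and invoke Lemma~\ref{lem:glueing} with $F=\emptyset$ to get the glued certificate; strict tightness of the support is preserved because a tight odd cycle in $G_i$ is a tight odd cycle in $G$. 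If instead $(G,\Sigma)$ has a strong 2-split into parts $(G_1,\Sigma_1),(G_2,\Sigma_2)$ with $V(G_1)\cap V(G_2)=\{u,w\}$, the subtlety is that the added odd-$K_2^2$ between $u$ and $w$ needs a consistent weight on its two parallel edges. The natural choice is to set the weight on $uw$ in each part to be governed by the shortest odd/even ${\rm val}$-path between $u$ and $w$ through the other part --- equivalently, pick $c_{uw}$ so that ${\rm arccos}(c_{uw})/\pi$ equals the appropriate side of the tightest MET constraint crossing the 2-separation. One must check: (a) with this choice ${\rm arccos}(c_i)/\pi\in{\rm MET}(G_i,\Sigma_i)$ (this follows because any odd cycle of $G_i$ using the virtual edge $uw$ closes up with an odd or even $uw$-walk through $G_{3-i}$ to an odd cycle of $G$, to which the hypothesis applies, and by the definition of odd-$K_2^2$ the two parallel virtual edges cover both parities); (b) the maximum-rank solutions $X_1,X_2$ produced by induction can be rotated so that $X_1[\{u,w\},\{u,w\}]=X_2[\{u,w\},\{u,w\}]$ --- here I would use that both agree on $u,w$ because they both realize the same prescribed inner product $c_{uw}=p_i(u)\cdot p_i(w)$, so a rigid motion of $\mathbb{S}^{d-1}$ aligns the two-point configurations; (c) the virtual edge $uw$, or at least one of its two copies, is degenerate in the glued problem so that $\Omega_1[u,w]+\Omega_2[u,w]=0$ and Lemma~\ref{lem:glueing} applies with $F=\{uw\}$ to delete it and return to ${\rm P}(G,\Sigma,c)$. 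Tracking which copy of $uw$ is tight, and showing the deleted virtual edge does not carry dual weight, is where the ``strictly tight'' bookkeeping (length-$\ge 3$ tight cycles) becomes essential: a virtual edge can be tight only via a length-$2$ cycle internal to the gadget, which is excluded.

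The main obstacle I anticipate is precisely step (b)--(c) of the 2-split case: one must choose the virtual-edge weight $c_{uw}$ so that both induced subproblems are feasible \emph{and} their maximum-rank completions have compatible $2\times 2$ restrictions \emph{and} the dual weights on the virtual edge cancel. The cleanest route is to show that $c_{uw}$ is forced: the tightest MET inequality of $(G,\Sigma)$ crossing $\{u,w\}$ determines ${\rm arccos}(c_{uw})/\pi$ on the nose, and feasibility of ${\rm P}(G_i,\Sigma_i,c_i)$ then follows from the inductive hypothesis applied to $(G_i,\Sigma_i)$ because all MET constraints of $(G_i,\Sigma_i)$ reduce to MET constraints of $(G,\Sigma)$. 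Compatibility of the $2\times2$ blocks is then automatic (both equal $\begin{pmatrix}1 & c_{uw}\\ c_{uw}& 1\end{pmatrix}$ up to orthogonal conjugation), and the cancellation $\Omega_1[u,w]+\Omega_2[u,w]=0$ is arranged by noting that a dual optimum is supported on strictly tight edges, so if $uw$ carries nonzero dual weight in $\Omega_i$ it is strictly tight in $(G_i,\Sigma_i)$, hence --- unwinding the gadget --- part of a length-$\ge 3$ tight cycle of $(G,\Sigma)$ through both parts; in that case the weight on the other copy of $uw$ in $\Omega_{3-i}$ is the negative by the odd-$K_2^2$ structure, and one can also simply reabsorb any residual virtual-edge weight into a genuine strictly tight cycle of $(G,\Sigma)$ before deleting the gadget. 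I would also need a short lemma that the rank bound (two or three) is additive under these gluings --- it is, because concatenating Gram representations along a common face as in~\eqref{eq:sum} does not increase the row dimension --- which both closes the induction and yields the low-dimensional embeddability corollary.
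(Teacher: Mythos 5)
Your overall architecture is the same as the paper's: induction driven by Theorem~\ref{thm:structural}, gluing certificates across cut vertices and strong 2-splits via Lemma~\ref{lem:glueing}, with odd-$K_3^2$ and odd-$K_3^2$-minor-free graphs as the irreducible cases. But there is a genuine gap at the heart of your 2-split step. You assert that the weight on the virtual edge $uw$ is ``forced on the nose'' by the tightest MET inequality crossing the separation. It is not: the crossing constraints only confine ${\rm arccos}(c_{uw})/\pi$ to an interval $I=\{\lambda : \max_i\lambda_{i,o}\le\lambda\le\min_j\lambda_{j,e}\}$, where $\lambda_{i,o}$ (resp.\ $\lambda_{i,e}$) ranges over odd (resp.\ even) $uw$-paths in part $i$, and this interval is a single point only when some crossing constraint of $(G,\Sigma)$ is actually tight. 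The paper's proof chooses $\lambda^*$ in the \emph{relative interior} of $I$, and this is precisely what guarantees that a virtual edge becomes tight in part $i$ only when the corresponding endpoint of $I$ is attained, i.e.\ only when there is a matching tight path in the other part that closes it up into a genuine tight odd cycle of $(G,\Sigma)$ (the fourth bullet of Claim~\ref{claim:main_cycle}). If you instead pin $c_{uw}$ to an endpoint of a nondegenerate interval, you manufacture a tight odd cycle through a virtual edge in one part with no counterpart in the other; the inductive dual solution may then carry weight on that virtual edge, there is nothing in $\Omega_{3-i}$ to cancel it against, and $\Omega_1+\Omega_2$ acquires a nonzero entry in position $\{u,w\}$ --- which need not even be an edge of $G$, so the glued matrix is not dual feasible for ${\rm P}(G,\Sigma,c)$ at all. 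Relatedly, your bookkeeping claim that ``a virtual edge can be tight only via a length-$2$ cycle internal to the gadget'' is false: $f_{i,e}$ or $f_{i,o}$ can lie on a tight odd cycle of length $\ge 3$ formed with a path through $\tilde G_i$, and handling exactly this situation (by shifting weight between the two parallel virtual edges and rescaling, as in the three cases of Claim~\ref{claim:main_cycle3}) is where the real work of the gluing step lies.

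A second, quieter gap is your treatment of the odd-$K_3^2$-minor-free base case as an ``explicit Gram representation'' plus a ``direct computation.'' For a single tight odd cycle this is fine, but for a general odd-$K_4$- and odd-$K_3^2$-minor-free graph the theorem demands a \emph{maximum-rank} primal solution in strict complementarity with a dual supported on strictly tight edges; the maximum rank is generally larger than $2$, and producing it requires first building a rank-$\le 2$ solution of a tightened problem (via path reductions along tight odd ladders) and then fanning out the non-tight portions, which is only possible because the hypergraph of tight odd cycles is acyclic --- a fact that itself needs the excluded-minor hypotheses (Lemma~\ref{lem:K32acyclic}). This is the bulk of the argument and is not recoverable from the sketch as written.
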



The proof of Theorem~\ref{thm:oddK4} will be given in Section~\ref{sec:proof}.

%
By using the following  resigning operations, the nondegeneracy assumption on $c$ can be eliminated in the first claim of Theorem~\ref{thm:oddK4}.  
Let $(G,\Sigma)$ be a signed graph and let  $S\subseteq V(G)$.
For  $x\in [0, 1]^{E(G)}$, we define the {\em resigning} $x^S\in [0,1]^{E(G)}$ of $x$ by 
$x^S(e)=1-x(e)$ if $e \in \delta(S)$ and
$x^S(e)=x(e)$ if $e\notin \delta(S)$.
Then a simple calculation shows that $x\in {\rm MET}(G,\Sigma)$ if and only if $x^S\in {\rm MET}(G, \Sigma\Delta \delta(S))$.

On the other hand, for $c\in [-1,1]^{E}$,  we define the {\em resigning} $c^S\in [0,1]^{E}$ of $c$ by 
$c^S(e)=-c(e)$ if $e \in \delta(S)$ and
$c^S(e)=c(e)$ if $e\notin \delta(S)$.
Then $c\in {\cal E}(G,\Sigma)$ if and only if $c^S\in {\cal E}(G,\Sigma\Delta \delta(S))$.
Note also that ${\rm arccos}(c^S)/\pi=x^S$ if and only if ${\rm arccos}(c)/\pi=x$.

Using resigning operations and Theorem~\ref{thm:oddK4} we can now prove the following generalization of Theorem~\ref{thm:met}.
\begin{theorem}
\label{thm:oddK4projection}
Let $(G,\Sigma)$ be a signed graph.
Then ${\rm arccos}({\cal E}(G,\Sigma))/\pi={\rm MET}(G,\Sigma)$ holds if and only if $(G,\Sigma)$ is odd-$K_4$ minor free.
\end{theorem}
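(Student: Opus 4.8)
The plan is to prove Theorem~\ref{thm:oddK4projection} by reducing it to Theorem~\ref{thm:oddK4} (for the ``if'' direction) and by exhibiting explicit obstructions coming from odd-$K_4$ (for the ``only if'' direction). Recall from Lemma~\ref{lem:met_poly} that the inclusion ${\rm arccos}({\cal E}(G,\Sigma))/\pi\subseteq {\rm MET}(G,\Sigma)$ always holds, so the content is the reverse inclusion precisely when $(G,\Sigma)$ is odd-$K_4$ minor free, together with a counterexample when it is not.

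For the ``if'' direction, suppose $(G,\Sigma)$ is odd-$K_4$ minor free and let $x\in {\rm MET}(G,\Sigma)$; I want $c\in {\cal E}(G,\Sigma)$ with ${\rm arccos}(c)/\pi=x$. Set $c(e)=\cos(\pi x(e))$. If $c$ is nondegenerate, then since ${\rm arccos}(c)/\pi=x\in{\rm MET}(G,\Sigma)$, Theorem~\ref{thm:oddK4} applies directly and gives feasibility of ${\rm P}(G,\Sigma,c)$, i.e.\ $c\in{\cal E}(G,\Sigma)$. To handle degenerate $c$, I would first perform a resigning. Using the resigning identities recorded just after Theorem~\ref{thm:oddK4} — namely $x\in{\rm MET}(G,\Sigma)\iff x^S\in{\rm MET}(G,\Sigma\Delta\delta(S))$, $c\in{\cal E}(G,\Sigma)\iff c^S\in{\cal E}(G,\Sigma\Delta\delta(S))$, and ${\rm arccos}(c^S)/\pi=x^S\iff {\rm arccos}(c)/\pi=x$ — I can move any degenerate even edge (where $c(e)=1$, i.e.\ $x(e)=0$) to become a degenerate odd edge (where $c(e)=-1$, i.e.\ $x(e)=1$), or vice versa. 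Concretely, a degenerate edge $e=ij\in E\setminus\Sigma$ with $x(e)=0$ means the two endpoints are forced to coincide in any Gram representation; I would contract it, noting that $G/ij$ is still odd-$K_4$ minor free (contraction of an even edge is a minor operation) and that ${\rm MET}$ and ${\cal E}$ both restrict compatibly along this contraction — then induct on the number of degenerate edges. A degenerate odd edge with $x(e)=1$ becomes an even degenerate edge after resigning on one endpoint, so the same contraction argument applies. Once all degenerate edges are removed, the remaining $c$ is nondegenerate on an odd-$K_4$ minor free signed graph, and Theorem~\ref{thm:oddK4} finishes the case; lifting back through the contractions and resignings gives $c\in{\cal E}(G,\Sigma)$ in the original graph.

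For the ``only if'' direction, suppose $(G,\Sigma)$ is not odd-$K_4$ minor free; I must produce $x\in{\rm MET}(G,\Sigma)\setminus {\rm arccos}({\cal E}(G,\Sigma))/\pi$. The natural candidate is the point witnessing the failure already in odd-$K_4$ itself: on odd-$K_4$ take $x(e)=1/2$ on every edge. One checks this lies in ${\rm MET}({\rm odd}\text{-}K_4)$ (each odd cycle $C$ has ${\rm val}(C,x)=|E(C)|/2\ge 3/2\ge 1$) but the corresponding $c\equiv 0$ is not completable: a Gram representation would need four unit vectors pairwise at angle $\pi/2$, i.e.\ an orthonormal set, forcing all off-diagonal inner products to be exactly $0$, which is fine — so I instead need the genuinely infeasible witness. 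The correct witness comes from the unsigned statement: since odd-$K_4^2$'s metric polytope projects to ${\rm MET}(K_4)$ and $K_4$ is not $K_4$-minor free, Theorem~\ref{thm:met} already gives a point of ${\rm MET}(K_4)\setminus {\rm arccos}({\cal E}(K_4))/\pi$, and this pulls back to odd-$K_4$ via the observation (stated after equation~\eqref{eq:met}) that the metric polytope of odd-$G^2$ coincides with ${\rm MET}(G)$. Having such a witness $x_0$ on a minor model of odd-$K_4$ in $(G,\Sigma)$, I extend it to all of $E(G)$: on the paths of the subdivision realizing odd-$K_4$'s edges distribute the value so ${\rm val}$ is preserved, on contracted (even) edges set $x=0$, and on all remaining edges set $x=1/2$ (or whatever makes every other odd cycle slack). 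The main obstacle is verifying that this extension stays inside ${\rm MET}(G,\Sigma)$ — i.e.\ every odd cycle constraint of $G$, not just those inside the minor model, remains satisfied — while still being outside ${\cal E}(G,\Sigma)$; infeasibility is inherited because a completion of $(G,\Sigma)$ restricts to a completion of the minor (resigning and contracting even edges are operations under which completability is preserved, by the footnote reduction and the resigning identities), contradicting the choice of $x_0$. I would likely handle the ${\rm MET}$-membership by choosing the extension values generously (close to $1/2$ on even edges not in the model after a global resigning that makes the model's non-contracted edges all even), so that every odd cycle using an edge outside the model has ${\rm val}\ge 1$ with room to spare, reducing the check to finitely many cycle types within the bounded-size model.

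To summarize the order of steps: (1) invoke Lemma~\ref{lem:met_poly} for one inclusion; (2) for the ``if'' direction, reduce to nondegenerate $c$ via resigning and contraction of degenerate edges, using that odd-$K_4$ minor freeness and both ${\rm MET}$ and ${\cal E}$ are preserved, then apply Theorem~\ref{thm:oddK4}; (3) for the ``only if'' direction, extract a witness in ${\rm MET}(K_4)\setminus{\rm arccos}({\cal E}(K_4))/\pi$ from Theorem~\ref{thm:met}, transfer it to an odd-$K_4$ minor model inside $(G,\Sigma)$ via the odd-$G^2$ identity, extend to all of $E(G)$ keeping ${\rm MET}(G,\Sigma)$-membership, and derive infeasibility in $(G,\Sigma)$ from infeasibility in the minor. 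The delicate point throughout is bookkeeping of the resigning/contraction reductions in step (2) and the ${\rm MET}$-membership of the extended witness in step (3).
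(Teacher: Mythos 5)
Your ``if'' direction is essentially the paper's argument: reduce to nondegenerate $c$ by contracting degenerate even edges and resigning degenerate odd edges (operations preserving odd-$K_4$ minor freeness and membership in both ${\rm MET}$ and ${\cal E}$), then invoke Theorem~\ref{thm:oddK4}. No issues there.

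The ``only if'' direction has a genuine gap in the construction of the base witness. You correctly discard $c\equiv 0$, but the replacement you propose does not live where you need it. Laurent's obstruction from Theorem~\ref{thm:met} is a point of ${\rm MET}(K_4)\setminus{\rm arccos}({\cal E}(K_4))/\pi$ for the \emph{equality}-constrained problem; via the identity between ${\rm MET}(K_4)$ and the metric polytope of odd-$K_4^2$ it yields a witness for the signed graph odd-$K_4^2$, whose completion problem is equivalent to the unsigned one. It does \emph{not} automatically ``pull back'' to odd-$K_4$: writing odd-$K_4$ as $(K_4,E(K_4))$, the program ${\rm P}(K_4,E(K_4),c)$ imposes only the one-sided constraints $X[i,j]\le c(ij)$, so its feasible region strictly contains that of the equality-constrained problem, and a point infeasible for $K_4$ could in principle become feasible after this relaxation. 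Moreover, a signed graph with an odd-$K_4$ minor need not have an odd-$K_4^2$ minor (odd-$K_4$ itself does not), so a witness supported on odd-$K_4^2$ cannot in general be planted inside an odd-$K_4$ minor model. What is needed, and what the paper uses, is a witness that is infeasible already for the one-sided constraints: take all six edges odd and $c(ij)=-1/2$, i.e.\ $x(ij)=2/3$. Every odd cycle of odd-$K_4$ is a triangle with ${\rm val}(C,x)=3(1-2/3)=1$, so $x\in{\rm MET}(K_4,E(K_4))$; but unit vectors with $p_i\cdot p_j\le -1/2$ for all six pairs would give $0\le\bigl\|\sum_i p_i\bigr\|^2\le 4+12\cdot(-1/2)=-2$, a contradiction --- and this argument uses only the upper bounds, which is exactly why it survives the relaxation. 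With that corrected witness, your extension strategy (degenerate weights on contracted even edges, slack weights on deleted edges, infeasibility inherited by restriction) matches the paper's one-sentence remark; the remaining check that the extended weight stays in ${\rm MET}(G,\Sigma)$, which you rightly flag as delicate, is left implicit by the paper as well.
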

\begin{proof}
Suppose that $(G,\Sigma)$ is odd-$K_4$ minor free.
By Lemma~\ref{lem:met_poly} it suffices to show that, for a given $c\in [-1,1]^E$
with ${\rm arccos}(c)/\pi \in {\rm MET}(G,\Sigma)$, 
$c\in {\cal E}(G,\Sigma)$ holds.
  
Suppose that there is an edge $ij\in E(G)\setminus \Sigma$ with $c({ij})=1$.
Let $(G',\Sigma)$ be the signed graph obtained  by contracting $ij$
and let $c'$ be the restriction of $c$ to $E(G')$.
Since ${\rm arccos}(c({ij}))=0$, 
${\rm arccos}(c')/\pi \in {\rm MET}(G',\Sigma)$ if and only if 
 ${\rm arccos}(c)/\pi \in {\rm MET}(G,\Sigma)$. 
 Also, for any solution $X$ of ${\rm P}(G,\Sigma,c)$,  
 we have $1\geq X[i,j]\geq c({ij})=1$, meaning that $X[i,j]=1$, and hence $X[i,k]=X[j,k]$ for any $k$.
 Hence  $c\in {\cal E}(G,\Sigma)$ if and only if $c'\in {\cal E}(G',\Sigma)$.
Therefore,  we may focus on the case when there is no edge $ij\in E(G)\setminus \Sigma$ with $c({ij})=1$.

Suppose that there is an edge $ij\in  \Sigma$ with $c({ij})=-1$.
Then we can consider resigning with respect to a cut $\delta(S)$ with $ij\in \delta(S)$,
which makes $ij$ even with $c^S({ij})=1$.
Hence we can again apply the same argument.
In total we can always reduce the problem to the situation when every edge is nondegenerate,
and the sufficiency follows from Theorem~\ref{thm:oddK4}.

For the necessity, the same example as the unsigned case works.
Indeed, if $(G,\Sigma)$ is odd-$K_4$ with $\Sigma=E(G)$, then 
matrix $X\in {\cal E}_4$ with $X[i,j]=-1/2 \ (i\neq j)$ is in ${\rm MET}(G,\Sigma)$ but not in ${\rm arccos}({\cal E}(G,\Sigma))/\pi$ (see \cite[Section 4]{l97}). 
This example can be extended to any signed graph having an odd-$K_4$ minor by assigning degenerate edge weight. 
\end{proof}

\section{Proof of Theorem~\ref{thm:oddK4}}
\label{sec:proof}

The proof of Theorem~\ref{thm:oddK4} consists of a series of lemmas based on the structural theorem of odd-$K_4$ minor free graphs (Theorem~\ref{thm:structural}). In Subsection~\ref{subsec:K32} we deal with the odd-$K_3^2$, and in Subsection~\ref{subsec:oddK32free} we deal with graphs having no odd-$K_3^2$ minor. In Subsection~\ref{subsec:proof} we solve the remaining case.

For simplicity of notation,  a dual solution $\omega$ in ${\rm P}(G,\Sigma,c)$ is  said to be {\em nice}  if
 it satisfies the strict complementarity condition with some (any) maximum rank solution of  ${\rm P}(G,\Sigma, c)$.

\subsection{Odd-$K_3^2$}
\label{subsec:K32}
Theorem~\ref{thm:structural} says that odd-$K_3^2$ is one of fundamental pieces when constructing odd-$K_4$ minor free graphs. In this subsection we deal with this special signed graph.
We begin with the case when $G$ is isomorphic to $K_3$ and the cycle is tight.  
\begin{lemma}
\label{K3}
Let $(G,\Sigma)$ be a signed graph and 
$c$ be nondegenerate.
Suppose that $G$ is isomorphic to $K_3$ and the cycle is odd and tight.
Then ${\rm P}(G,\Sigma, c)$ has a unique solution, whose rank is equal to two,
and there exists a nice dual solution $\omega$ supported on strictly tight edges.
\end{lemma}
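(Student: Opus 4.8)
The plan is to work directly with a Gram representation on the circle $\mathbb{S}^1$, since the anticipated rank is two. Write $K_3$ on vertices $\{1,2,3\}$ with edge signs $\sigma$, and after resigning assume without loss of generality that $\Sigma=\emptyset$ (this changes $c$ to $c^S$ and keeps the cycle odd only if we are careful: note an odd cycle remains odd under resigning, so in the resigned picture the single triangle is odd, i.e.\ $|E(C)\cap\Sigma|$ is odd; if we resign to make two of the three edges even, the third stays odd). So suppose $12,13$ are even and $23$ is odd, and set $\theta_e={\rm arccos}(c(e))/\pi\in[0,1]$ for each edge. Tightness of the odd triangle says, in the resigned coordinates, ${\rm val}(C,\theta)=\theta_{12}+\theta_{13}+(1-\theta_{23})=1$, i.e.\ $\theta_{23}=\theta_{12}+\theta_{13}$. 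Nondegeneracy says $\theta_{12},\theta_{13}\in(0,1]$ and $\theta_{23}\in[0,1)$, so in particular $\theta_{12}+\theta_{13}=\theta_{23}<1$.

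\textbf{Step 1: feasibility and uniqueness.} First I would place $p(1),p(2),p(3)$ on the unit circle $\mathbb{S}^1$: put $p(1)$ at angle $0$, $p(2)$ at angle $\pi\theta_{12}$, and $p(3)$ at angle $-\pi\theta_{13}$. Then the angle between $p(1)$ and $p(2)$ is $\pi\theta_{12}$, giving $p(1)\cdot p(2)=\cos(\pi\theta_{12})=c(12)$; similarly for edge $13$; and the angle between $p(2)$ and $p(3)$ is $\pi(\theta_{12}+\theta_{13})=\pi\theta_{23}<\pi$, so $p(2)\cdot p(3)=\cos(\pi\theta_{23})=c(23)$. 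Hence $X={\rm Gram}(p)$ is feasible, and (since the three points are not collinear through the origin, as $\theta_{12},\theta_{13}>0$ and $\theta_{23}<1$) $\rank X=2$. For uniqueness, I would argue that any feasible $X$ satisfies $X[1,2]\ge c(12)$, $X[1,3]\ge c(13)$, $X[2,3]\le c(23)$; translating to arc-lengths on whatever sphere $X$ lives on via the triangle inequality ${\rm arccos}(X[2,3])\le{\rm arccos}(X[1,2])+{\rm arccos}(X[1,3])$ (valid on any sphere) and the monotonicity of ${\rm arccos}$, the tightness constraint forces all three inequalities to be equalities and forces the three vectors to lie on a common great circle in the prescribed configuration. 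Hence $X$ is the unique solution.

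\textbf{Step 2: constructing the nice dual solution.} Here I would produce $\omega\in\mathbb{R}^{V\cup E}$ with $\Omega\succeq0$, complementary slackness against $X$, and $\rank\Omega=|V|-\rank X=3-2=1$, supported on the edges (all three of which are strictly tight, being in the length-$3$ tight triangle). Because $X$ has a $1$-dimensional kernel spanned by some $v\in\mathbb{R}^3$, the natural candidate is $\Omega=vv^{\top}$ (up to scaling), which is rank one, PSD, and has ${\rm Tr}(X\Omega)=0$ automatically; one then reads off $\omega(i)=v_i^2$ and $\omega(ij)=2v_iv_j$ and must check the sign conditions $\omega(ij)\le0$ for even edges $12,13$ and $\omega(ij)\ge0$ for the odd edge $23$, and the per-edge complementary slackness $(X[i,j]-c(ij))\omega(ij)=0$ (which holds since every edge is tight, hence active). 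The sign check is where one must be a little careful: $v$ is orthogonal to all three vectors $p(i)$ on the great circle, so $v$ is (up to sign) the normal to that plane — wait, in $\mathbb{R}^2$ the three $p(i)$ span the plane, so actually the kernel of $X$ as a $3\times3$ matrix comes from the linear dependence among the \emph{rows}, i.e.\ $v$ records the affine/linear dependency $v_1 p(1)+v_2 p(2)+v_3 p(3)=0$. The signs of $v_1,v_2,v_3$ are then determined by the cyclic order of the three points on the circle, and a direct computation (using $p(2)$ at positive angle, $p(3)$ at negative angle, $p(1)$ between them) shows $v_1$ has one sign and $v_2,v_3$ the opposite, which is exactly what makes $\omega(12)=2v_1v_2$ and $\omega(13)=2v_1v_3$ negative and $\omega(23)=2v_2v_3$ positive. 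Undoing the resigning restores the original sign pattern.

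\textbf{Main obstacle.} The only genuinely delicate point is the sign bookkeeping in Step 2 — verifying that the unique (up to scaling) kernel vector $v$ of $X$ has the sign pattern forcing $\Omega=vv^{\top}$ to be dual feasible with the correct inequality directions on even versus odd edges, and that this survives the resigning reduction. Everything else (feasibility by explicit circle placement, rank count, uniqueness via the spherical triangle inequality, ${\rm Tr}(X\Omega)=0$, strictly-tight support) is routine. I would also double-check the edge case $\theta_{12}=1$ or $\theta_{13}=1$ (one $c$-value equal to $-1$ on an even edge, still nondegenerate): then two of the points are antipodal but the configuration is still on $\mathbb{S}^1$, rank is still $2$, and the argument goes through unchanged.
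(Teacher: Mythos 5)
Your proposal is correct and follows essentially the same route as the paper's proof: place the three points on the unit circle via the tightness equation, take the kernel vector $v$ of the rank-two Gram matrix, and verify that $vv^{\top}$ has the sign pattern making it dual feasible (the paper does the sign check by casing on $|\Sigma|\in\{1,3\}$ rather than resigning to a single odd edge, which is cosmetic). The edge case $\theta_{12}=1$ you flag cannot actually occur, since $\theta_{12}+\theta_{13}=\theta_{23}<1$.
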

\begin{proof}
%
Let $V(G)=\{1,2,3\}$.
The equation, 
\[
\sum_{e\in E(G)} \sigma(e){\rm arccos}(c(e))=(1-|\Sigma|)\pi,
\] 
of the tightness of the triangle uniquely determines (up to rotations) three points $p_1, p_2, p_3$ on the unit circle in the plane $\mathbb{R}^2$ such that $p_i\cdot p_j=c(ij)$.
Moreover, by $c\in (-1, 1]^{\Sigma}\times  [-1,1)^{E\setminus \Sigma}$, 
any two among the three are linearly independent.

Since $p_1, p_2, p_3$ lie on a plane, there is a nonzero vector $u\in \mathbb{R}^3$ such that 
$\sum_{i=1}^3 u_ip_i=0$.
If $|\Sigma|=3$ then $-p_3$ lies on the cone generated by $p_1$ and $p_2$,
meaning that all the signs of $u_i$ are the same.
If $|\Sigma|=1$ (say $\sigma(12)=+$), then $p_3$ lies on the cone generated by $p_1$ and $p_2$,
meaning that $\sigma(13)=\sigma(23)=-$ and $\sigma(12)=+$.
Moreover, since any two points  are linearly independent, 
$u_i\neq 0$ for every $i$, meaning that $u_iu_j\neq 0$ for every $i$ and $j$.
Summarizing these case analysis, we conclude that the sign of $u_iu_j$ is equal to $\sigma(ij)$ for every edge $ij$.

Let $X={\rm Gram}(p_1, p_2, p_3)$. 
Then observe that $u\in {\rm ker} X$.
Therefore $u u^{\top}$ is a feasible dual solution such that every entry is nonzero. 
Also, since $X$ has rank two, $\rank X+\rank u u^{\top}=3$.
Therefore, $(X, u u^{\top})$ satisfies the  complementarity condition.
\end{proof}

The following lemma solves the case when $(G,\Sigma)$ is odd-$K_3^2$.
\begin{lemma}
\label{K32}
Let $(G,\Sigma)$ be a signed graph equivalent to the odd-$K_3^2$ and $c$ be nondegenerate.
If ${\rm arccos}(c)/\pi\in {\rm MET}(G,\Sigma)$, then ${\rm P}(G,\Sigma, c)$ is feasible and 
there is a nice dual solution supported on strictly tight edges.
\end{lemma}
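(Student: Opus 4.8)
The plan is to reduce the odd-$K_3^2$ case to the triangle case (Lemma~\ref{K3}) handled above, by choosing, for each of the three ``parallel pairs'' of edges, the one edge that is actually active in any feasible completion, and then gluing the resulting triangle solutions via Lemma~\ref{lem:glueing}. Concretely, write $V(G)=\{1,2,3\}$ and for each pair $\{i,j\}$ let $e_{ij}^{+}$ be the even edge and $e_{ij}^{-}$ the odd edge between $i$ and $j$. After resigning I may assume $\Sigma=E(G)$, i.e.\ $(G,\Sigma)$ is literally odd-$K_3^2$; note resigning preserves nondegeneracy, membership in ${\rm MET}$, and the conclusion of the lemma (a nice dual solution on strictly tight edges transforms to a nice dual solution on strictly tight edges). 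Then for each pair the two constraints say $c(e_{ij}^{+})\le X[i,j]\le c(e_{ij}^{-})$; since ${\rm arccos}(c)/\pi\in{\rm MET}(G,\Sigma)$ one first checks feasibility of this sandwich, i.e.\ $c(e_{ij}^{+})\le c(e_{ij}^{-})$ for each pair — this follows from applying the odd-cycle inequality to the length-$2$ odd cycle $\{e_{ij}^{+},e_{ij}^{-}\}$.

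First I would dispose of the degenerate-looking subcases. If for some pair the two weights coincide, $c(e_{ij}^{+})=c(e_{ij}^{-})=:c_0$, then $X[i,j]=c_0$ is forced, so we may contract that pair down to a single edge and we are essentially looking at a two-vertex (or, after the forced identification $X[i,k]=X[j,k]$ is weighed against the remaining constraints, a smaller) problem that is handled directly; I'd check that the ``all odd cycles of length $\geq 3$'' requirement is met because the remaining parallel pairs still give length-$2$ odd cycles but the triangle through the third vertex survives. The main case is when $c(e_{ij}^{+})<c(e_{ij}^{-})$ strictly for all three pairs. Here I claim that the metric-polytope membership is equivalent to the single scalar inequality asserting that the ``tightest'' triangle one can form is still a valid spherical triangle: among the $8$ triangles obtained by picking one edge from each pair, ${\rm MET}(G,\Sigma)$ exactly says each of them $C$ has ${\rm val}(C,{\rm arccos}(c)/\pi)\ge 1$, and the binding one is the triangle $C^{*}$ that uses, from each pair, the edge minimizing $\sigma(e){\rm arccos}(c(e))$ — equivalently, for each pair one picks $e_{ij}^{+}$ or $e_{ij}^{-}$ to make the angle contribution smallest.

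The core construction: let $C^{*}$ be this extremal triangle. If ${\rm val}(C^{*})>1$ strictly, then all $8$ triangle inequalities are strict, no odd cycle is tight, and I would argue directly that ${\rm P}(G,\Sigma,c)$ has a feasible solution of rank $3$ in the relative interior of its feasible region, for which the only nice dual solution is $0$ — which is vacuously supported on strictly tight edges. If ${\rm val}(C^{*})=1$, apply Lemma~\ref{K3} to the triangle signed graph $(C^{*},\Sigma\cap E(C^{*}))$, which is odd and tight and nondegenerate; this yields the unique rank-$2$ Gram solution $X^{*}$ on the three vertices together with a nice dual $\Omega^{*}=uu^{\top}$ supported on all three edges of $C^{*}$. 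Now I need to check that $X^{*}$, read as a completion, also satisfies the three slack constraints coming from the edges of $G$ not in $C^{*}$: for each pair, if $C^{*}$ used $e_{ij}^{-}$ (the $\le$ side) then I must verify $X^{*}[i,j]\ge c(e_{ij}^{+})$, and symmetrically; this is exactly the content of the remaining (non-binding) triangle inequality being satisfied, which holds by ${\rm MET}$ membership, and it is strict by the strict separation $c(e_{ij}^{+})<c(e_{ij}^{-})$, so $\Omega^{*}$ extended by zeros on $E(G)\setminus E(C^{*})$ remains dual feasible with complementary slackness. Finally Lemma~\ref{lem:glueing} is not even needed here since everything lives on one triangle; I just note $\rank X^{*}+\rank\Omega^{*}=2+1=3=|V|$, so $\Omega^{*}$ is a nice dual solution, and every edge of $C^{*}$ lies in the tight odd cycle $C^{*}$ of length $3$, hence is strictly tight.

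The step I expect to be the real obstacle is the identification of the extremal triangle $C^{*}$ and the verification that the non-binding parallel-edge constraints are satisfied (and strictly, under nondegeneracy) by the forced solution $X^{*}$ — in other words, showing that the length-$2$ odd-cycle inequalities together with one binding triangle inequality already imply all the other triangle inequalities at the specific point $X^{*}$. This is a finite but slightly fiddly case analysis on which of $e_{ij}^{+},e_{ij}^{-}$ gets selected in each pair and on the signs of the coordinates of the kernel vector $u$ (whose sign pattern is pinned down by Lemma~\ref{K3}); I'd organize it by the number of odd edges among the three chosen edges of $C^{*}$, exactly mirroring the $|\Sigma|\in\{1,3\}$ split in the proof of Lemma~\ref{K3}.
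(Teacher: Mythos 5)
Your treatment of the tight\hyphenation{tri-angle}-triangle case has the same shape as the paper's (apply Lemma~\ref{K3} to a tight odd triangle, check the three remaining parallel-edge constraints, note $\rank X^*+\rank \Omega^*=3$), though note that the check for the parallel edge of $e\in C^*$ is the length-$2$ odd-cycle inequality $c(e_{ij}^+)\le c(e_{ij}^-)$, not a triangle inequality. However, the setup around it has problems. The claim that ${\rm MET}(G,\Sigma)$-membership says all $8$ triangles have ${\rm val}\ge 1$ is false: only \emph{odd} cycles are constrained, and the coordinate-wise ``extremal'' triangle (picking from each pair the edge of smallest contribution) may be an \emph{even} triangle with ${\rm val}<1$ without violating membership. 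Because of this parity constraint the binding odd triangle is not obtained by independent per-pair choices, so your dichotomy ``${\rm val}(C^*)>1$ versus $=1$'' is not well-founded as stated. (Also, resigning can never achieve $\Sigma=E(G)$ here: both edges of a parallel pair flip together, so each pair always keeps one odd and one even edge.)

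More seriously, the two cases where no triangle is tight are left unproved, and they are where the paper does its real work. (a) If $c(e_{ij}^+)=c(e_{ij}^-)=c_0$ for some pair, then $c_0\in(-1,1)$ by nondegeneracy, so $X[i,j]=c_0$ does \emph{not} force $X[i,k]=X[j,k]$ and no contraction or vertex identification is available; since a tight $2$-cycle is not a tight odd cycle of length $\ge 3$, the only admissible dual is $0$, and you must still exhibit a rank-$3$ completion with $X[i,j]$ pinned to $c_0$. (b) If no odd cycle is tight, the existence of a rank-$3$ feasible point is precisely the nontrivial feasibility assertion and cannot be asserted ``directly'': you need a single point of the box $\prod_{ij}[x(e_{ij}^-),x(e_{ij}^+)]$ lying in the interior of ${\rm MET}(K_3)$, whereas the four odd-triangle inequalities of the signed graph only certify four \emph{different} vertices of the box, one per inequality, and no common point follows formally. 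The paper closes both gaps with one construction: continuously tighten the non-tight, non-degenerate edges until either every $2$-cycle is tight (reducing to an interior point of ${\rm MET}(K_3)$, hence a rank-$3$ Gram matrix) or a triangle becomes tight (giving a rank-$2$ solution of the perturbed problem, after which one point is moved slightly off the plane to regain rank $3$ while remaining feasible for the original $c$). Some argument of this kind is needed to complete your proof.
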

\begin{proof}
If there is a tight triangle, then the statement follows from Lemma~\ref{K3}.
Suppose that there is no tight triangle. 
We show that ${\rm P}(G,\Sigma,c)$ has a solution of rank three
(which satisfies the strict complementarity pair condition the zero matrix).

To see this we shall continuously changes the value of $c(e)$ for every non-tight edge $e$ such that 
$c(e)$ is monotone decreasing (resp.~increasing) if $e$ is odd (resp.~even) until a new tight cycle appears.
Since every pair of  vertices is linked to each other by parallel edges with distinct signs, 
$c$ keeps staying in $(-1,1]^{E\cap \Sigma}\times [-1, 1)^{E\setminus \Sigma}$.

If the new tight cycle has length two, we keep on the continuous change for the remaining non-tight edges, until either every 2-cycle is tight or a triangle becomes tight. Let $c'$ be the resulting $c$. 

If there is no tight triangle with respect to $c'$, then every two cycle is tight, 
and hence ${\rm P}(G,\Sigma,c')$ coincides with the undirected case ${\rm P}(K_3,c')$ 
(where $c'$ is naturally considered as a vector on $E(K_3)$ since $c'(e_1)=c'(e_2)$ for every pair of parallel edges $e_1$ and $e_2$).
Since ${\rm arccos}(c')/\pi\in {\rm MET}(K_3)$ and there is no tight triangle, 
${\rm arccos}(c')$  is an interior point of ${\rm MET}(K_3)$.
Thus the corresponding $X\in {\cal E}_3$ is also an interior point, meaning that $X$ is a rank-three solution $X$ of ${\rm P}(K_3,c')$.
This $X$  is also a solution of ${\rm P}(G,\Sigma,c)$. 
 
If there is a tight triangle with respect to $c'$, then 
Lemma~\ref{lem:K32acyclic} gives a rank-two solution $X$ of ${\rm P}(G,\Sigma, c')$,
which is also a solution of ${\rm P}(G,\Sigma, c)$.
More specifically, there are three points $p_1, p_2, p_3$ on a (2-dimensional) plane such that 
$p_i\cdot p_j=c'(ij)$, where $ij$ denotes the edge between $i$ and $j$ in the tight triangle.
Let $\epsilon$ be a positive number.
Since there was no tight triangle at the beginning (with respect to $c$), there must exists a pair $i$ and $j$ of vertices such that $\sigma(e)c'(e)>\sigma(e)c(e)$ for each edge $e$ between $i$ and $j$.
Without loss of generality let $i=1$ and $j=2$.
We shall slightly move $p_1$ off the plane spanned by $p_2$ and $p_3$ 
such that $|p_1\cdot p_2|$ changes at most $\epsilon$ while 
$p_2\cdot p_3$ and $p_3\cdot p_1$ are unchanged.
(For any $\epsilon\geq 0$, we can move $p$ in such a manner due to the triangle inequality on the spherical space.)
If $\epsilon$ is sufficiently small, we have
$\sigma(e) ( p_1\cdot p_2)\geq \sigma(e)c'(e)-\epsilon\geq \sigma(e)c(e)$
for any edge $e$ between $1$ and $2$.
This implies that ${\rm Gram}(p_1, p_2, p_3)$ is a feasible solution of ${\rm P}(G, \Sigma, c)$, whose rank is equal to three.
\end{proof}

For later use we extend Lemma~\ref{K3} to the following case.
\begin{lemma}
\label{cycle}
Let $(G,\Sigma)$ be a signed graph and $c$ be nondegenerate.
Suppose that $G$ is a tight odd cycle of length at least three.
Then ${\rm P}(G,\Sigma, c)$ has a unique solution $X$ with $\rank X=2$ 
and there is a nice dual solution $\omega$ such that  $\omega(e)\neq 0$ for all edges $e$.
\end{lemma}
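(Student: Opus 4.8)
The plan is to reduce Lemma~\ref{cycle} to Lemma~\ref{K3} by an ``unfolding'' argument. Write the odd cycle as $C=v_1v_2\cdots v_kv_1$ with $k\geq 3$. The tightness equation ${\rm val}(C,{\rm arccos}(c)/\pi)=1$, i.e. $\sum_{e\in E(C)}\sigma(e)\,{\rm arccos}(c(e))=(1-|E(C)\cap\Sigma|)\pi$, together with $c\in(-1,1]^{\Sigma}\times[-1,1)^{E\setminus\Sigma}$, should force any feasible solution to place the vertices $p_1,\dots,p_k$ on the sphere in a configuration which is essentially $1$-dimensional in the angular sense: going around the cycle and summing (signed) arccos-angles, the inequality $|\alpha_i|\le\pi$ on each edge forces all the geodesic arcs $\widehat{p_ip_{i+1}}$ to lie on a common great circle with no ``backtracking'', so all $p_i$ lie on a single great circle, hence on a common $2$-plane. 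This is the spherical analogue of the statement that equality in the triangle inequality forces collinearity; I would phrase it carefully as: for a feasible $X$ with Gram representation $p$, the spherical distances $d_i:={\rm arccos}(\sigma(e_i)p_i\cdot p_{i+1})$ (reading signs so that $d_i\in[0,\pi]$ after resigning) satisfy $\sum d_i\equiv 0\pmod{2\pi}$ in a way that is tight, and then a length/metric argument on the sphere pins down the configuration up to rotation, and in particular gives $\rank X=2$.

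The cleanest way to make this rigorous is probably to first resign so that $\Sigma=\{e_1\}$ contains exactly one (odd) edge — any odd cycle is equivalent to such a signing — so that the tightness equation becomes $\sum_{i=1}^{k}{\rm arccos}(c(e_i))=\pi$ with the convention that we negate $c(e_1)$; after this normalization all the relevant angles are nonnegative and sum to $\pi$. Now it is the standard fact that $k$ points on a sphere whose consecutive geodesic distances sum to exactly $\pi$ must be arranged in order along a geodesic (half great circle) from a point $p$ to its antipode $-p$; consequently they lie on a $2$-plane and the configuration is unique up to the $O(2)$ (really $O(1)$, a reflection) symmetry fixing that geodesic, which after un-resigning is the claimed uniqueness. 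Hence $\rank X=2$. For the dual solution, since $p_1,\dots,p_k\in\mathbb{R}^2$ they satisfy a (unique up to scale) nontrivial affine-type dependence; more precisely the $k\times k$ Gram matrix $X$ has rank $2$, so $\ker X$ has dimension $k-2$. To get a dual certificate $\Omega$ supported on the cycle edges with $\Omega[i,j]\ne 0$ for every edge and $\rank\Omega=k-2$, I would proceed inductively on $k$: when a vertex $v_i$ lies in the relative interior of the geodesic arc (which happens for all internal vertices), the three consecutive points $p_{i-1},p_i,p_{i+1}$ are on a common line, giving a local dependence $\lambda_{i-1}p_{i-1}+\lambda_i p_i+\lambda_{i+1}p_{i+1}=0$ with all $\lambda$'s nonzero; the corresponding rank-one matrix contributes nonzero entries on exactly the edges $v_{i-1}v_i$ and $v_iv_{i+1}$ and the ``diagonal'' chord $v_{i-1}v_{i+1}$, which is not an edge of $G$, so a suitable nonnegative combination of such rank-one pieces over consecutive triples can be arranged to cancel all chords and leave nonzero coefficients on all cycle edges, with total rank $k-2$. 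Then one checks the sign condition $\sigma(e)\omega(e)\ge 0$ is met (by the same cone-of-$p_i$ argument as in the proof of Lemma~\ref{K3}, applied locally to each triple), and the rank count $\rank X+\rank\Omega=2+(k-2)=k$ gives strict complementarity, so $\omega$ is nice.

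An alternative, and probably shorter, route for the dual certificate is to glue: the odd cycle $C$ of length $k$ is obtained from a tight triangle by repeatedly subdividing even edges, and subdividing an even edge $uv$ with a new vertex $w$ corresponds to taking a strong-$2$-split / gluing along $\{u,w\}$ and $\{w,v\}$ with two tight $K_3$'s — so Lemma~\ref{lem:glueing} together with Lemma~\ref{K3} and Lemma~\ref{K32} (applied to the odd-$K_2^2$ pieces) would assemble a nice dual solution on the subdivided cycle directly, each application preserving the property that the certificate is nonzero on every original edge. One has to verify that the Gram data agree on the $2$-element overlaps (which they do, since the unique configuration of the whole tight cycle restricts to the unique configuration of each sub-piece), and that the gluing never zeroes out an edge of $C$. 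The main obstacle in either approach is the first one: establishing rigorously that tightness of the cycle forces the whole point configuration onto a single great circle — i.e. the spherical ``equality in the polygon inequality implies degeneracy'' lemma — and that the resulting configuration is unique. Everything after that (the rank count, the sign bookkeeping for the dual, strict complementarity) is routine linear algebra mirroring the $k=3$ case already handled in Lemma~\ref{K3}. I would state and prove the great-circle degeneracy claim as a small standalone sublemma before invoking it.
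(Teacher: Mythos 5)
Your second route for the dual certificate is, up to direction of the induction, exactly the paper's proof: the paper peels off one degree-two vertex $v$ with neighbours $i,j$, forms the reduced cycle $G-v+ij$ with $\arccos(c'(ij))=\arccos(c(iv))+\arccos(c(vj))$, obtains a rank-two primal and an everywhere-nonzero dual for it by induction, takes the tight triangle on $\{v,i,j\}$ from Lemma~\ref{K3}, scales so the two dual weights on the chord $ij$ cancel (possible because $ij$ is even in one piece and odd in the other), and invokes Lemma~\ref{lem:glueing} for strict complementarity; uniqueness then falls out because the everywhere-nonzero dual forces $Y[i,j]=c(ij)$ on all edges and the tight equation pins down the rest. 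Your direct ``great-circle degeneracy'' argument for the primal is a genuinely different and valid route to uniqueness and $\rank X=2$: it buys a geometric picture without induction, at the cost of having to prove the equality case of the spherical polygon inequality. Be careful how you set that sublemma up: a feasible $X$ only satisfies the one-sided constraints $\sigma(e)(p_i\cdot p_j)\geq\sigma(e)c(e)$, so you must first apply the metric-polytope inequality (Lemma~\ref{lem:met_poly}) to the \emph{realized} distances to get ${\rm val}(C,x)\geq 1$, sandwich it against ${\rm val}(C,\arccos(c)/\pi)=1$ to conclude every edge constraint holds with equality, and only then run the collinearity argument. Your write-up treats the realized distances as already tight, which is the conclusion rather than the hypothesis.

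Your first route for the dual, however, does not work as stated. For a cycle of length $k\geq 5$ the chord $v_{i-1}v_{i+1}$ is contained in exactly one consecutive triple, namely $(v_{i-1},v_i,v_{i+1})$; since each rank-one piece $u^{(i)}(u^{(i)})^{\top}$ has a \emph{nonzero} entry at that chord and no other piece is supported there, no nonnegative combination of these pieces can cancel it, so the resulting $\Omega$ is not supported on $E(G)$. Cancellation of chords requires each chord to be shared by two pieces with opposite-sign contributions, which is precisely the telescoping structure of the gluing construction (a length-$(k-1)$ cycle dual and a triangle dual overlapping on the single chord $ij$), not a sum over consecutive triples. Since you offer the gluing route as a fallback and it is correct, the proof goes through, but the consecutive-triples construction should be deleted or replaced by the telescoping one.
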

\begin{proof}
The proof is done by induction on the number of vertices.
The base case has been done in Lemma~\ref{K3}.

Suppose that $|V(G)|\geq 4$.
Take any vertex $v$ and let $i, j$ be the neighbors of $v$.
Without loss of generality (by resigning), assume that $vi$ and $vj$ are both even.
We first remark
\begin{equation}
\label{eq:cycle1}
0<{\rm arccos}(c(iv))+{\rm arccos}(c(vj))<\pi.
\end{equation}
The first inequality follows from the fact that $c$ is nondegenerate and hence $c(iv)\neq 1$ and $c(vj)\neq 1$.
The second inequality follows since $G$ is a tight odd cycle and hence
$1={\rm val}(G, {\rm arccos}(c)/\pi)=\sum_{e\in E(G)\setminus \Sigma}{\rm arccos}(c)/\pi
+\sum_{e\in \Sigma}(1-{\rm arccos}(c)/\pi)
> {\rm arccos}(c(iv))/\pi+{\rm arccos}(c(vj))/\pi$
(where the last inequality follows from the fact that $\Sigma\neq \emptyset$ and $c$ is nondegenerate).

Let $(G', \Sigma)=(G-v+ij, \Sigma)$ be the signed graph obtained from $(G,\Sigma)$ by removing $v$ and inserting a new even edge between $i$ and $j$,
and define $c'\in [-1,1]^{E(G')}$ by $c'(e)=c(e)$ for $e\in E(G')\setminus \{ij\}$ and 
\[{\rm arccos}(c'(ij))={\rm arccos}(c(iv))+{\rm arccos}(c(vj))\] for the new edge $ij$. 
By (\ref{eq:cycle1}), $c'$ is well-defined. (Recall that ${\rm arccos}$ is assumed to be a bijection between $[-1,1]$ and $[0,\pi]$.)
Note that $(G', \Sigma)$ is odd and tight with respect to $c'$. 
Also, by (\ref{eq:cycle1}), $c'$ is nondegenerate.
Therefore, by induction, ${\rm P}(G', \Sigma', c')$ has a  pair $(X', \Omega')$ satisfying the strict complementarity condition such that $X'$ has rank two and $\omega'(e)\neq 0$ for every edge $e$.

Let $K_3$ be the complete graph on $\{v, i, j\}$, and define $\tilde{c}\in \mathbb{R}^{E(K_3)}$ by
$\tilde{c}(ij)=c'(ij)$ and $\tilde{c}(e)=c(e)$ for $e\in E(K_3)\setminus \{ij\}$.
Then signed graph $(K_3, \{ij\})$ is odd and tight with respect to $\tilde{c}$.
Also, by (\ref{eq:cycle1}), $\tilde{c}$ is nondegenerate.
Hence, by Lemma~\ref{K3}, ${\rm P}(K_3, \{ij\}, \tilde{c})$ has a  pair $(\tilde{X}, \tilde{\Omega})$ satisfying the strict complementarity condition such that $\tilde{X}$ has rank two and $\tilde{\omega}(e)\neq 0$ for every edge $e$.

By the complementarity condition, we have $X'[i,j]=c(ij)=\tilde{X}[i,j]$, meaning that we can glue $X'$ and $\tilde{X}$ to form a matrix $X$ of size $|V(G)|\times |V(G)|$ and rank two.
Note also that, since $ij$ is even in $(G', \Sigma')$ and $ij$ is odd in $(K_3, \{ij\})$, we may assume (by scaling by a positive number) that $\omega'(ij)+\tilde{\omega}(ij)=0$.
Now, by regarding $\Omega'$ and $\tilde{\Omega}$ as matrices of size $|V(G)|\times |V(G)|$ (by appending zero rows and zero columns), let $\Omega=\Omega'+\tilde{\Omega}$. 
Since $\omega'(ij)+\tilde{\omega}(ij)=0$, $\Omega$ is dual feasible in ${\rm P}(G, \Sigma, c)$.
By Lemma~\ref{lem:glueing}, $(X, \Omega)$ satisfies the  strict complementarity condition.

Since $\omega(ij)\neq 0$ for every edge $ij$,  $Y[i,j]=c(ij)$ for any feasible solution $Y$ of ${\rm P}(G,\Sigma, c)$. 
Since $(G,\Sigma)$ is a tight cycle, the tight equation uniquely determines the other entries of $Y$,
which implies the uniqueness of the solution.
\end{proof}

\subsection{Odd-$K_3^2$ minor free graphs}
\label{subsec:oddK32free}

Let $(G,\Sigma)$ be a signed graph and ${\cal C}$ be a set of odd cycles.
We denote $E({\cal C}):=\bigcup_{i=1}^k E(C_i)$, $V({\cal C}):=\bigcup_{i=1}^k V(C_i)$,
and $G[{\cal C}]:=(V({\cal C}), E({\cal C}))$.
Also, based on ${\cal C}$ we  define a hypergraph ${\cal H}({\cal C})$ on $V(G)$ as follows.
We first construct a hypergraph ${\cal H}'$ on $V(G)$ by regarding each cycle in ${\cal C}$ as a  hyperedge. 
Then we construct hypergraph ${\cal H}({\cal C})$ from ${\cal H}'$ by greedily margining two hyperedges $e_1$ and $e_2$ with $|e_1\cap e_2|\geq 2$ into a single hyperedge. (The resulting hypergraph is uniquely determined.)
Hence in ${\cal H}({\cal C})$ we have $|e\cap e'|\leq 1$ for any two hyperedges.

Our goal is to show Lemma~\ref{lem:K32free} which proves Theorem~\ref{thm:oddK4} for odd-$K_3^2$ minor free graphs. Since the proof is a bit involved, we split it into three parts. 
In Section~\ref{subsec:acyclic}, we prove acyclicity of ${\cal H}({\cal C})$, which is a key property of expanding 
a low-rank solution to a maximum rank solution.
In Section~\ref{subsub:path}, we give a tool for inductively constructing a completion.
In Section~\ref{subsub:K32}, we prove Lemma~\ref{lem:K32free} by using the tools given in Sections~\ref{subsub:path} and~\ref{subsub:K32}.

\subsubsection{Acyclicity of ${\cal H}({\cal C})$}
\label{subsec:acyclic}
We say that a hypergraph is {\em acyclic} if the underlying edge-vertex incidence bipartite graph  has no cycle.
The following is an important property of odd-$K_3^2$ minor free graphs.
\begin{lemma}
\label{lem:K32acyclic}
Let $(G,\Sigma)$ be a signed graph and ${\cal C}$ be a set of odd cycles.
Suppose that $(G,\Sigma)$ is odd-$K_4$ minor free and odd-$K_3^2$ minor free.
Then ${\cal H}({\cal C})$ is acyclic.
\end{lemma}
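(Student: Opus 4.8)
The claim is that odd-$K_4$ minor freeness together with odd-$K_3^2$ minor freeness forces the hypergraph $\mathcal{H}(\mathcal{C})$ associated with any family $\mathcal{C}$ of odd cycles to be acyclic, i.e.\ the bipartite incidence graph between merged hyperedges and vertices has no cycle. The plan is to argue by contradiction: assume $\mathcal{H}(\mathcal{C})$ contains a cycle in its incidence bipartite graph, and then extract an odd-$K_3^2$ minor (or, if that fails, an odd-$K_4$ minor) from the union $G[\mathcal{C}]$ of the cycles involved. The key structural observation to exploit is the merging rule: two cycles that share at least two vertices get merged into one hyperedge, so within a single hyperedge the underlying subgraph is ``2-connected-like'' and highly interwoven, whereas distinct hyperedges meet in at most one vertex. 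A cycle in the incidence bipartite graph therefore corresponds to a genuine cyclic chain $e_1, v_1, e_2, v_2, \dots, e_k, v_k, e_1$ of distinct hyperedges and distinct vertices, with $v_i \in e_i \cap e_{i+1}$.

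\textbf{Key steps.} First I would reduce to the minimal case: take a shortest incidence cycle, so all the $e_i$ are distinct hyperedges, all the $v_i$ are distinct cut-like vertices, and $k \geq 2$. The case $k=2$ is impossible by the merging rule (two hyperedges sharing two vertices would have been merged), so $k \geq 3$. Second, inside each hyperedge $e_i$ I would use that it was built by merging odd cycles along $\geq 2$-vertex overlaps to find, between its two designated contact vertices $v_{i-1}$ and $v_i$, two internally disjoint paths $P_i, P_i'$ in $G[\mathcal{C}]$ whose union forms a cycle; moreover, because the generating cycles are \emph{odd}, I can arrange a parity statement — at least one of these connection structures can be taken to carry an odd total sign, or the ``theta-graph'' sitting inside the hyperedge between $v_{i-1}$ and $v_i$ has paths of both parities. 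Third, stringing these pieces around the incidence cycle gives a long cyclic structure on the contact vertices $v_1, \dots, v_k$ in which consecutive $v$'s are joined by (at least) a doubled connection of controllable parity; contracting each path to an edge and using resigning, this should produce either an odd-$K_3^2$ (when $k$ small, e.g.\ $k=3$, three pairs of parallel edges of opposite sign on a triangle) or, when the structure is richer, an odd cycle with a chord of the right parity, which after one more reduction yields odd-$K_4$. I would organize the parity bookkeeping via resigning so that without loss of generality the generating odd cycles have exactly one odd edge each, which makes the ``odd theta'' extraction cleaner.

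\textbf{Main obstacle.} The hard part will be the parity control in step two and three: an incidence cycle in $\mathcal{H}(\mathcal{C})$ is purely a combinatorial statement about vertex sharing and says nothing a priori about signs, yet the forbidden minors odd-$K_3^2$ and odd-$K_4$ are inherently signed objects. So I must show that the \emph{oddness} of every cycle in $\mathcal{C}$, propagated through the merging operations, is enough to guarantee that the doubled connections extracted around the incidence cycle can be given the sign pattern required by odd-$K_3^2$ or odd-$K_4$. Concretely, I expect to need a lemma of the form: if a subgraph is an edge-union of odd cycles and contains two internally disjoint $u$--$v$ paths, then either these two paths have opposite parity (giving an odd $2$-cycle on $\{u,v\}$ after contraction, the building block of odd-$K_3^2$) or one can find a third $u$--$v$ path of the complementary parity using a further odd cycle in the family. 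Handling the bookkeeping uniformly, and checking that when this local extraction fails at every contact vertex one is instead forced into an odd-$K_4$, is where the real work lies; everything else (choosing the shortest incidence cycle, contracting even paths, applying resigning) is routine. I would likely also invoke the structural theorem (Theorem~\ref{thm:structural}) to restrict attention to the odd-$K_3^2$-minor-free ``base'' pieces, so that the obstruction analysis only has to rule out one kind of bad configuration at a time.
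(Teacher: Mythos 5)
Your overall architecture is the same as the paper's: take a shortest cycle $v_1e_1\dots v_ke_kv_1$ in the incidence bipartite graph, note $k\geq 3$ because merged hyperedges meet in at most one vertex, extract a signed connection between the two contact vertices inside each hyperedge, and chain these around the incidence cycle to produce an odd-$K_3^2$ minor (when every hyperedge yields an odd cycle through its two contact vertices) or an odd-$K_4$ minor (otherwise). The paper does exactly this, and its endgame is short once the right local lemma is in hand.

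The genuine gap is in that local lemma, which you both misstate and defer. Your proposed dichotomy --- ``either the two internally disjoint $u$--$v$ paths have opposite parity, or there is a third $u$--$v$ path of the complementary parity'' --- does not work: in the second branch the third path is in general not internally disjoint from the first two, so it yields neither an odd cycle through $u$ and $v$ nor the ``building block of odd-$K_3^2$'' you want. The odd-$K_4^-$ rooted at $\{u,v\}$ is the standing counterexample: it is an edge-union of two odd cycles sharing two vertices, its two internally disjoint $u$--$v$ paths have the \emph{same} parity, a complementary-parity $u$--$v$ walk exists but is useless, and $u,v$ lie on no common odd cycle at all. The correct dichotomy (the paper's Lemma~\ref{lem:K32acyclic2}) is: within each hyperedge there is either an odd cycle through $u$ and $v$, \emph{or} an odd-$K_4^-$ minor rooted at $\{u,v\}$ (five internally disjoint paths, $u$ and $v$ of degree two, both side cycles odd). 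It is proved by induction on the merging process: pick an odd cycle $C$ through $u$ in one glued piece, use 2-connectivity to route two disjoint paths from $v$ to $C$, and the parity of the two resulting $u$--$v$ paths decides the branch. You then still need the complementary endgame you only gesture at: if some $e_1$ yields the rooted odd-$K_4^-$ between $v_1$ and $v_2$, the remaining hyperedges $e_2,\dots,e_k$ (each connected and non-bipartite) supply both an odd and an even $v_1$--$v_2$ path internally disjoint from $G[e_1]$, and one of these completes the rooted odd-$K_4^-$ to an odd-$K_4$ minor. Without the correct formulation of the local dichotomy, the case analysis you defer to ``the real work'' cannot be carried out. (Your appeal to Theorem~\ref{thm:structural} is not needed here and would be circular in spirit, since this lemma is an ingredient of the main proof rather than a consequence of the decomposition.)
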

In order to prove this lemma we have to introduce a special signed graph.
The odd-$K_4^-$ is a signed graph obtained from the odd-$K_4$ by removing an edge.
Let $u,v$ be a pair of distinct vertices in a signed graph $(G,\Sigma)$. 
We say that $(G,\Sigma)$ has an odd-$K_4^-$-minor {\em rooted at $\{u,v\}$} if 
it contains an odd-subdivision of the odd-$K_4^-$ in which $u$ and $v$ have degree two and are not contained in an odd cycle.
(In other words, there are five internally vertex disjoint paths as in Figure~\ref{fig:ladder}(a) such that the cycle through $a, b, u$ and the cycle through $a, b, v$ are both odd.)

\begin{figure}
\centering
\begin{minipage}{0.4\textwidth}
\centering
\includegraphics[scale=0.6]{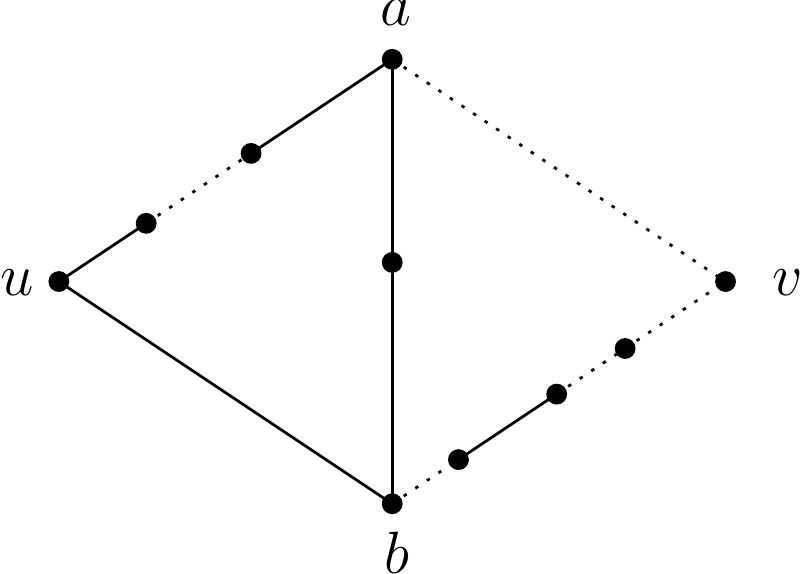}
\par(a)
\end{minipage}
\begin{minipage}{0.55\textwidth}
\centering
\includegraphics[scale=0.65]{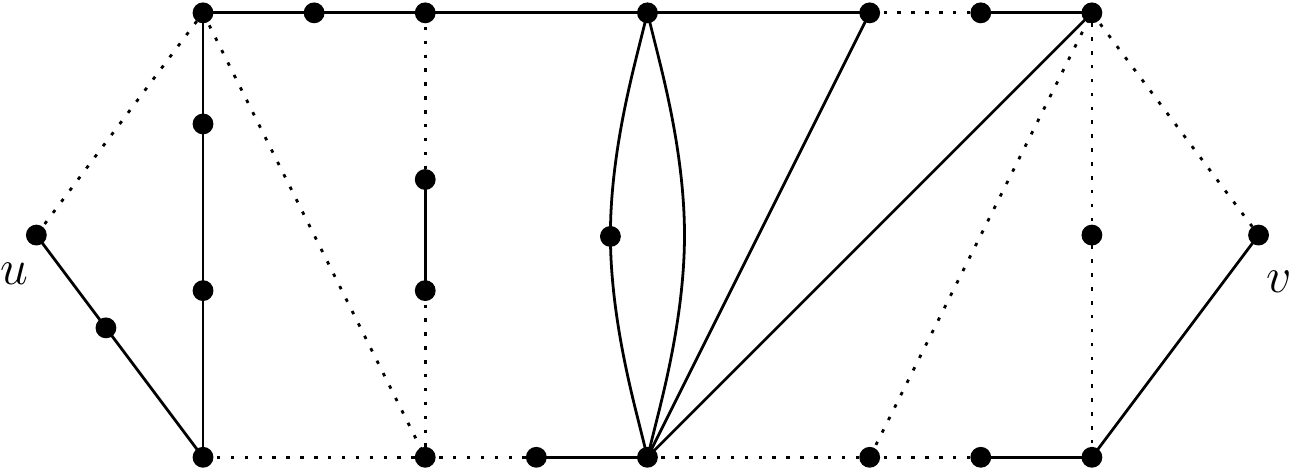}
\par (b)
\end{minipage}
\caption{(a) An odd-subdivision of the odd-$K_4^-$ rooted at $\{u,v\}$ and (b) an odd ladder rooted at $\{u,v\}$, where dotted edges are even and bold edges are odd.}
\label{fig:ladder}
\end{figure}

\begin{lemma}
\label{lem:K32acyclic2}
Let $(G,\Sigma)$ be a signed graph and ${\cal C}$ be a set of odd cycles.
Let $e$ be  a hyperedge in ${\cal H}({\cal C})$ and let $(G',\Sigma')$ be the subgraph of  $(G,\Sigma)$ induced by (the vertices of) $e$. Then  for each pair of distinct vertices $u, v\in V(G')$, $(G',\Sigma')$ has an odd cycle passing through $u$ and $v$ or an  odd-$K_4^-$ minor rooted at $\{u,v\}$.
\end{lemma}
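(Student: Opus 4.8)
The statement asserts that inside a single "merged" hyperedge $e$ of $\mathcal{H}(\mathcal{C})$ — which by construction is a union of odd cycles from $\mathcal C$ glued pairwise along at least two vertices — every pair $u,v$ is either jointly covered by one odd cycle of $(G',\Sigma')$ or spanned by an odd-$K_4^-$ minor rooted at $\{u,v\}$. The natural approach is induction on the number of cycles of $\mathcal C$ that were merged to form $e$. The base case is a single odd cycle $C$: then every $u,v\in V(C)$ lie on $C$, which is odd, so the first alternative holds. For the inductive step, write $e=e'\cup E(C)$ where $e'$ is a merged hyperedge on fewer cycles, $C\in\mathcal C$, and $|V(e')\cap V(C)|\ge 2$; let $G''$ be the subgraph induced by $e'$, so by induction $G''$ already satisfies the conclusion for every pair in $V(G'')$, and $C$ is an odd cycle sharing at least two vertices, say $a,b$, with $G''$.

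The core case analysis is on where $u$ and $v$ sit relative to $V(G'')$ and $V(C)$. If both $u,v\in V(G'')$, induction gives the conclusion directly (an odd cycle or rooted odd-$K_4^-$ inside $G''\subseteq G'$). If both $u,v\in V(C)$, they lie on the odd cycle $C$ and we are done. The genuinely new case is $u\in V(G'')\setminus V(C)$ and $v\in V(C)\setminus V(G'')$ (up to symmetry). Here I would pick two vertices $a,b$ of $V(G'')\cap V(C)$ and first apply the inductive hypothesis to the pair $\{u,a\}$ in $G''$: we get inside $G''$ either an odd cycle through $u$ and $a$, or an odd-$K_4^-$ rooted at $\{u,a\}$. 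On the $C$ side, the two arcs of $C$ between $a$ and $b$, one of which passes through $v$, together with the point $v$, give a controlled "theta"-like configuration with known parities (their sum of signs is odd since $C$ is odd). The plan is to splice the $G''$-side gadget with the $C$-side arcs at $a$ (and $b$) to produce the five internally disjoint paths of Figure~\ref{fig:ladder}(a), checking that the two relevant cycles come out odd — this is where one uses the freedom to choose which of the two $a$–$b$ arcs of $C$ to use, exploiting that one is even and the other odd (their sum being odd). One must also handle the degenerate subcases where $a=b$ is impossible ($|V(e')\cap V(C)|\ge 2$ rules it out) and where the $G''$-gadget is merely an odd cycle rather than an odd-$K_4^-$; in the latter situation the odd cycle through $u,a$ plus the $C$-arcs still yields either a longer odd cycle through $u,v$ or an odd-$K_4^-$ rooted at $\{u,v\}$, depending on parities.

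I would organize the parity bookkeeping by resigning so that, after a resigning, we may assume a chosen spanning structure (e.g.\ a convenient tree or one arc of $C$) is all-even, reducing "odd cycle" to "contains an odd number of odd edges" in a fixed reference signing; this makes the splice arguments into finite parity checks rather than ad hoc drawings. The main obstacle I anticipate is precisely this splicing step: ensuring the five paths are genuinely internally vertex-disjoint (paths from the $G''$-gadget may re-enter $V(C)$, or the $C$-arcs may share interior vertices with the gadget), and simultaneously that both designated cycles are odd. To control disjointness I would, when a gadget path meets $V(C)\cap V(G'')=\{a,b\}$ again, truncate it at the first such meeting and reroute along $C$, re-verifying parity using the odd/even arc dichotomy of $C$; since $C$ and the $G''$-structure overlap in at most the two vertices $a,b$ (by the merging rule $|e_1\cap e_2|\le 1$ in the final hypergraph is not available here, but the pairwise-merge construction guarantees the two pieces meet in exactly the vertex set along which they were merged, which we may take to be $\{a,b\}$), the interference is localized and can be resolved. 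Once the rooted odd-$K_4^-$ (or the single spanning odd cycle) is exhibited for the mixed pair, the symmetric and same-side cases are immediate, completing the induction.
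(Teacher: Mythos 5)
Your overall strategy is the same as the paper's (induct over the merging process, start from a single odd cycle, and use the parity dichotomy ``two internally disjoint $u$--$v$ paths of different parity give an odd cycle through $u,v$; of equal parity they combine with a third path into an odd-$K_4^-$ rooted at $\{u,v\}$''), but the decisive splicing step --- which you yourself flag as the main obstacle --- is not carried out, and the two devices you propose for it do not work. First, the disjointness premise is false: you assert that $C$ and the $G''$-structure meet ``in exactly the vertex set along which they were merged, which we may take to be $\{a,b\}$,'' but the merging rule only guarantees $|V(e')\cap V(C)|\geq 2$; that intersection can be arbitrarily large, the gadget extracted from $G''$ for the pair $\{u,a\}$ may meet $V(C)$ in many vertices other than $a,b$, and the arcs of $C$ may run through the gadget. ``Truncate at the first meeting and reroute along $C$'' is not a repair: truncating a branch of an odd-$K_4^-$ subdivision destroys both the structure and the parity bookkeeping. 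Second, when the inductive hypothesis returns an odd-$K_4^-$ rooted at $\{u,a\}$ rather than an odd cycle through $u$ and $a$, you give no way to convert it into one rooted at $\{u,v\}$: a root must be a degree-two vertex of the subdivision not lying on an odd cycle, and attaching a path from $a$ to $v$ leaves $v$ pendant (and that path cannot in general be contracted away, since only even edges may be contracted). A further structural gap: the induction ``adjoin one cycle of ${\cal C}$ at a time'' is not always available, because the merging process may have to merge two composite hyperedges neither of which is a single cycle, and the resulting hyperedge need not be buildable by adding cycles one at a time while keeping two shared vertices at every step.

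The paper resolves exactly these points by a different organization. The inductive statement is: if two $2$-connected signed graphs each satisfy the conclusion and share at least two vertices, then their union does. In the mixed case $u\in V(G_1)\setminus V(G_2)$, $v\in V(G_2)\setminus V(G_1)$ it uses only an odd cycle $C$ of $G_1$ through $u$ (available because every hyperedge is a union of odd cycles of ${\cal C}$ covering all its vertices --- no appeal to the full dichotomy for the pair $\{u,a\}$ is needed, so the problematic rooted-gadget case never arises), and then invokes $2$-connectivity of $G_1\cup G_2$ to obtain two paths $P_1,P_2$ from $v$ to $V(C)$ that are internally disjoint from $C$ and meet each other only at $v$. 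Disjointness is thus delivered by a Menger/fan-lemma argument rather than by rerouting along designated vertices, and the parity dichotomy applied to $P_i$ together with the two arcs of $C$ from $u$ to the attachment points finishes the proof. I recommend restructuring your argument along these lines: replace ``odd cycle through $u$ and $a$'' by ``odd cycle through $u$,'' and replace the routing through $a,b$ by the fan-lemma application to the union.
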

\begin{proof}
We say that a signed graph satisfies property (A) if  for each pair $u, v$ of vertices it has an odd cycle through $u, v$ or an odd-$K_4^-$ minor rooted at $\{u,v\}$.
From the construction of ${\cal H}({\cal C})$ (based on ${\cal H}'$), it suffices to show the following:
Given two 2-connected signed graphs $(G_1, \Sigma_1)$ and $(G_2, \Sigma_2)$ with $|V(G_1)\cap V(G_2)|\geq 2$ satisfying property (A),  their union satisfies the property (A).

Now take any $u, v$ of $V(G_1)\cup V(G_2)$ with $u\in V(G_1)\setminus V(G_2)$ and $v\in V(G_2)\setminus V(G_1)$.
Since $(G_1, \Sigma_1)$ satisfies (A), $(G_1, \Sigma_1)$ contains an odd cycle $C$ that passes though $u$.
Since $G_1\cup G_2$ is 2-connected, 
there are two paths $P_1$ and $P_2$ between $v$ and $V(C)$ such that 
$|V(P_i)\cap V(C)|=1$ for $i=1,2$ and 
$V(P_1)\cap V(P_2)=\{v\}$.
Then $P_1\cup P_2\cup C$ contains an odd cycle through $u, v$ or odd-$K_4^-$ minor rooted at $\{u,v\}$.
Indeed, denoting $V(P_i)\cap V(C)=\{a_i\}$ for $i=1,2$,
let $Q_i$ be the path between $u$ and $a_i$ in $C$ such that $V(Q_1)\cap V(Q_2)=\{u\}$.
Then $P_i\cup Q_i\ (i=1,2)$ are internally vertex-disjoint paths between $u$ and $v$. 
If $P_1\cup Q_1$ and $P_2\cup Q_2$ are both even or both odd, then 
$P_1\cup P_2\cup C$ forms an odd-$K_4^-$ minor rooted at $\{u,v\}$.
\end{proof}

\begin{proof}[Proof of Lemma~\ref{lem:K32acyclic}]
Suppose that ${\cal H}({\cal C})$ is not acyclic.
Take a shortest cycle $v_1e_1\dots v_k e_k v_1$ in the underlying vertex-edge incidence bipartite graph of ${\cal H}({\cal C})$.
Since $|e\cap e'|\leq 1$, $k\geq 3$ holds. 
By Lemma~\ref{lem:K32acyclic2} 
the subgraph $(G_i, \Sigma_i)$ of $(G,\Sigma)$ induced by $e_i$ contains an odd cycle through $v_i, v_{i+1}$ or odd-$K_4^-$ minor rooted at $\{v_i, v_{i+1}\}$.
If $G_1$  contains an odd-$K_4^-$ minor rooted at $\{v_1, v_2\}$, then 
$G_2\cup \dots \cup G_k$ contains an odd path and an even path between $v_1$ and $v_2$, which are internally disjoint from $G_1$. Hence $(G,\Sigma)$ contains an odd-$K_4$ minor, contradicting the assumption.
Thus $G_i$ contains an odd cycle through $v_i, v_{i+1}$ for every $1\leq i\leq k$.
However, since $k\geq 3$, this would imply that $G_1\cup \dots \cup G_k$ contains an odd-$K_3^2$ minor, which is a contradiction.
\end{proof}

\subsubsection{Path reduction}
\label{subsub:path}

In order to explicitly construct solutions for odd-$K_3^2$ minor free signed graphs, 
we generalize the proof of Lemma~\ref{cycle} based on the following reduction technique.
Suppose that $(G, \Sigma)$ is a signed graph and $c\in [-1, 1]^E$,
and let $P$ be a path  with $P\cap \Sigma= \emptyset$ contained in a tight odd cycle $C$.
Also let $u, v$ be the two endvertices of $P$.
Note that
\begin{equation}
\label{eq:path_reduction}
0\leq \sum_{e\in E(P)}{\rm arccos}(c(e))\leq \pi
\end{equation}
since $1={\rm val}(C, {\rm arccos}(c)/\pi)\geq \sum_{e\in E(P)}{\rm arccos}(c(e))/\pi$.
We say that $(G', \Sigma', c')$ is the {\em path-reduction} of $(G,\Sigma,c)$ through $P$, if 
$(G', \Sigma')$ is obtained from $(G,\Sigma)$ by removing all the internal vertices  in $V(P)\setminus \{u,v\}$ and inserting a new even edge $uv$ 
and $c'\in [-1,1]^{E(G')}$ is obtained by setting ${\rm arccos}(c'(uv))=\sum_{e\in E(P)} {\rm arccos}(c(e))$ for the new edge $uv$ and $c'(e')=c(e')$ for the remaining edges $e'$ of $G'$.  
By (\ref{eq:path_reduction}) $c'$ is well-defined.
%
The path-reduction through a general path $P$ (where $P\cap \Sigma$ may not be empty) is accordingly defined by using resining operations.

Primal solutions of the completion problems will be constructed inductively by using path-reductions,
and our next goal is to prove Lemma~\ref{lem:ear_reduction} which states that there always exists a path through which the path-reduction preserves a certain property of the auxiliary hypergraph ${\cal H}({\cal C})$.
In order to prove this, we need a statement similar to  Lemma \ref{lem:K32acyclic2} for a set of tight odd cycles.
Because of the restriction to tight cycles, we have to further extend the concept of odd-$K_4^-$ rooted minor as follows.

A {\em wheel}  is an undirected graph with $n$ vertices formed by connecting a vertex to all vertices of the cycle of length $n-1$. 
The vertex adjacent to all other vertices is called the {\em center vertex}.
Note that a graph  is allowed to contain parallel edges, and hence a wheel may contain parallel edges incident to the center vertex. 
The graph obtained from a wheel by removing one edge not incident to the center is called a {\em fan}.

Given a signed graph $(G,\Sigma)$, a sequence ${\cal C}=(C_1, \dots, C_k)$ of distinct cycles  is said to be  a {\em tight odd ladder} if 
\begin{itemize}
\item each $C_i$ is a tight odd cycle,
\item for each $i$ with $1\leq i\leq k-1$, $C_i\cap C_{i+1}$ forms a path of length at least one, and
\item for each  $i, j$ with  $1\leq i\leq  j\leq k$,  if $V(C_i)\cap V(C_j)\neq \emptyset$, then $\bigcup_{\ell=i}^j C_{\ell}$ forms a subdivision of a fan.
\end{itemize}
See Figure~\ref{fig:ladder}(b) for an example.
A tight odd ladder ${\cal C}=(C_1,\dots, C_k)$ is said to be {\em rooted at $\{u,v\}$} if 
$u\in V(C_1)\setminus V(\bigcup_{i=2}^k C_i)$ and $v\in V(C_k)\setminus V(\bigcup_{i=1}^{k-1} C_i)$.
Similarly, for an edge $e$ and a vertex $v$,  ${\cal C}$ is said to be {\em rooted at $\{e,v\}$} if 
$e\in E(C_1)\setminus \bigcup_{i=2}^k E(C_i)$ and $v\in V(C_k)\setminus \bigcup_{i=1}^{k-1} V(C_i)$.
To prove Lemma \ref{lem:ear_reduction}, we need the following two technical lemmas.

\begin{lemma}
\label{lem:two_cycles}
Let $(G,\Sigma)$ be a signed graph, $c\in [-1,1]^E$ with ${\rm arccos}(c)/\pi\in {\rm MET}(G)$,
and  $C_1$ and $C_2$ be two distinct tight odd cycles with $|V(C_1)\cap V(C_2)|\geq 2$.
Also let $u$ and $v$ be two distinct vertices in $V(C_1)\cap V(C_2)$,
and suppose that $C_i$ is decomposed into two paths $P_i^1$ and $P_i^2$ between $u$ and $v$ for each $i=1, 2$. 
If $P_1^1$ is internally disjoint from $C_2$, then either $P_1^1\cup P_2^1$ or $P_1^1\cup P_2^2$ is  a tight odd cycle.
\end{lemma}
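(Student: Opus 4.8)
The plan is to work with the arc-length values $\theta(e) := \mathrm{arccos}(c(e))/\pi$, so that the tightness of a cycle $C$ becomes the statement $\mathrm{val}(C,\theta) = 1$, i.e. $\sum_{e\in E(C)\setminus\Sigma}\theta(e) + \sum_{e\in E(C)\cap\Sigma}(1-\theta(e)) = 1$. After resigning we may assume $\Sigma\cap E(C_1) = \Sigma\cap E(C_2)$ restricted to the shared portion is handled uniformly; more carefully, the quantities $\mathrm{val}(P,\theta)$ for a path $P$ are resigning-invariant once we fix endpoints, so I would first reduce to the case where the four paths $P_1^1, P_1^2, P_2^1, P_2^2$ are all even (every edge even) by resigning on a suitable vertex set — this is legitimate because being a tight odd cycle and lying in $\mathrm{MET}(G)$ are preserved under resigning, and the conclusion "is a tight odd cycle" is also resigning-invariant. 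In that setting, write $a = \mathrm{val}(P_1^1,\theta)$, $b = \mathrm{val}(P_1^2,\theta)$, $c' = \mathrm{val}(P_2^1,\theta)$, $d = \mathrm{val}(P_2^2,\theta)$, all nonnegative, with $a+b = 1$ (tightness of $C_1$) and $c'+d = 1$ (tightness of $C_2$).

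**Key steps.** The cycle $P_1^1\cup P_2^1$ has $\mathrm{val}$-value $a + c'$ and is odd precisely when this value is... no — parity is combinatorial, not numeric, so I need to be careful. Here is the cleaner route: since $C_1$ and $C_2$ are both odd cycles and $P_1^1$ is a common sub-path pattern, the parity of $P_1^1\cup P_2^j$ is determined by the parities of the four sub-paths; since $P_1^1\cup P_1^2 = C_1$ is odd and $P_2^1\cup P_2^2 = C_2$ is odd, exactly one of the two closed walks $P_1^1\cup P_2^1$, $P_1^1\cup P_2^2$ is odd (as a cycle). Relabel so that $P_1^1\cup P_2^1$ is the odd one. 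Now I must show it is a simple cycle and that it is tight, i.e. $\mathrm{val}(P_1^1\cup P_2^1,\theta) = a + c' = 1$, equivalently $c' = b$. Simplicity: $P_1^1$ is internally disjoint from $C_2$ by hypothesis, so $P_1^1$ meets $P_2^1$ only at $u,v$, hence the union is a genuine cycle. For tightness: apply the $\mathrm{MET}(G)$ constraint to the odd cycle $C := P_1^1\cup P_2^1$ (which is in $G$), giving $a + c' \ge 1$. Symmetrically the other combination $P_1^2\cup P_2^?$ — here I should use that $C_1$ is the union $P_1^1\cup P_1^2$ and apply $\mathrm{MET}$ to further odd cycles built from the remaining pieces. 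Concretely: $P_1^2\cup P_2^2$ is then also odd (matching parities) and in $G$, so $b + d \ge 1$; adding to $a + c' \ge 1$ gives $(a+b) + (c'+d) \ge 2$, but the left side is exactly $1 + 1 = 2$ by tightness of $C_1, C_2$. Hence both inequalities are equalities, so $a + c' = 1$, i.e. $P_1^1\cup P_2^1$ is tight. (One must double-check that $P_1^2 \cup P_2^2$ is indeed a valid cycle in $G$ to invoke $\mathrm{MET}$; if $P_1^2$ and $P_2^2$ share internal vertices this needs a small argument, but the parity/value bookkeeping via $\mathrm{MET}$ applied to closed walks decomposed into simple cycles still yields $\mathrm{val}\ge 1$ on each odd piece, and the total is unaffected.)

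**Main obstacle.** The delicate point is the parity bookkeeping combined with the possibility that $P_1^2$ is *not* internally disjoint from $C_2$: then $P_1^2\cup P_2^2$ may not be a simple cycle, and I cannot directly apply a single $\mathrm{MET}$ inequality to it. The fix I anticipate is to decompose the closed walk $P_1^2 \cup P_2^2$ (or rather the symmetric-difference cycle in the $\mathbb{Z}_2$-cycle space spanned by $C_1, C_2$, and the new cycle $C$) into edge-disjoint simple cycles; the odd ones among them each contribute $\ge 1$ to $\mathrm{val}$ by the $\mathrm{MET}$ hypothesis, the even ones contribute $\ge 0$ trivially, and the total $\mathrm{val}$ over the decomposition equals $\mathrm{val}(C_1) + \mathrm{val}(C_2) - \mathrm{val}(C) = 2 - (a+c')$ up to the even/odd edge accounting. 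Pinning down this identity in the signed setting — tracking how the $(1-\theta(e))$ terms on odd edges distribute across the symmetric difference — is where I expect to spend the most effort, though conceptually it is just the statement that $\mathrm{val}$ is "additive modulo the even-edge count" on symmetric differences. Once that identity is in hand, the sign of $\mathrm{val}(C) - 1 = a + c' - 1$ is forced to be $\le 0$, which together with the direct $\mathrm{MET}$ bound $\ge 0$ finishes the proof.
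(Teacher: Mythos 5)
Your argument is essentially the paper's own proof: pass to the complementary edge multiset $P_1^2\uplus P_2^2$, decompose it (with repeated edges viewed as parallel edges) into edge-disjoint cycles, observe by parity that at least one of these is odd, and combine the MET bounds ${\rm val}\geq 1$ on odd cycles and ${\rm val}\geq 0$ on even ones with the additivity identity ${\rm val}(C_1)+{\rm val}(C_2)=2$ to force ${\rm val}(P_1^1\cup P_2^1)=1$. One correction: your opening reduction ``by resigning we may assume all four paths consist of even edges'' is impossible, since that would make $C_1=P_1^1\cup P_1^2$ an even cycle, contradicting the hypothesis that it is odd; fortunately this step is also unnecessary, because ${\rm val}(H,x)=\sum_{e\in E(H)\setminus\Sigma}x(e)+\sum_{e\in E(H)\cap\Sigma}(1-x(e))$ is already additive over edge-disjoint unions in the signed setting, so the bookkeeping you defer to the last paragraph (``tracking how the $(1-\theta(e))$ terms distribute'') is immediate from the definition and the whole argument goes through without any resigning.
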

\begin{proof}
Since $P_1^1$ is internally disjoint from $C_2$, $P_1^1\cup P_2^1$ and $P_1^1\cup P_2^2$ are cycles,
and either one of them is odd. 
Without loss of generality, assume that $P_1^1\cup P_2^1$ is odd.
Consider the graph obtained from $P_1^2\cup P_2^1$ by regarding each edge in $E(P_1^2)\cap E(P_2^1)$ as parallel edges. 
Then this graph is Eulerian, and it can be decomposed into edge-disjoint cycles $E_1, \dots, E_k$ with $k\geq 1$.
Moreover, there is at least one odd cycle among $E_i$, since otherwise $P_1^1\cup P_2^1$ would be even.
For each $i$, we have ${\rm val}(E_i)\geq 0$ if $E_i$ is even and otherwise ${\rm val}(E_i)\geq 1$.
Therefore 
$2={\rm val}(C_1)+{\rm val}(C_2)={\rm val}(P_1^1\cup P_2^1)+\sum_{i=1}^k {\rm val}(E_i)\geq k'+1$,
where $k'$ is the number of odd cycles among $E_i$, which is at least one.
Thus  ${\rm val}(P_1^1\cup P_2^1)=1$ holds, implying that $P_1^1\cup P_2^1$ is tight. 
\end{proof}

\begin{lemma}
\label{lem:ladder}
Let $(G,\Sigma)$ be a signed graph which is odd-$K_4$ minor and odd $K_3^2$ minor free, 
$c\in [-1,1]^E$ with ${\rm arccos}(c)/\pi\in {\rm MET}(G)$, and $u, v$ be distinct vertices.
Also, let ${\cal C}=(C_1,\dots, C_k)$ be a tight odd ladder  rooted at $\{u,v\}$, 
$C$ be a tight odd cycle passing through $u$, and $e^*$ be an edge in $C$ incident to  $u$. 
Suppose that $|V(C)\cap V({\cal C})|\geq 2$, then there is a tight odd ladder ${\cal D}=(D_1,\dots, D_k)$ rooted at $\{e^*,v\}$ with $E({\cal D})\subseteq E({\cal C})\cup E(C)$. 
\end{lemma}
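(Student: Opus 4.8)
The plan is to argue by induction on $k$, the length of the tight odd ladder $\mathcal{C}=(C_1,\dots,C_k)$, with the goal of "rerouting" the root from the vertex $u$ to the edge $e^*$ incident to $u$ without leaving the edge set $E(\mathcal{C})\cup E(C)$ and without destroying tightness of any cycle used. First I would handle the base case $k=1$: here $\mathcal{C}=(C_1)$ with $u\in V(C_1)$, $v\in V(C_1)$, and $C$ is a tight odd cycle through $u$ with $|V(C)\cap V(C_1)|\ge 2$. Pick a second vertex $w\in V(C)\cap V(C_1)$, $w\neq u$; decompose $C_1$ into two $u$--$w$ paths and $C$ into two $u$--$w$ paths, and, after passing to a sub-path of $C$ that is internally disjoint from $C_1$ (which exists since $C\neq C_1$ and they share $\ge 2$ vertices), apply Lemma~\ref{lem:two_cycles} to obtain a new tight odd cycle $D_1$ built from pieces of $C_1$ and $C$. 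The point of choosing the internally disjoint piece of $C$ to contain $e^*$ (or of choosing $w$ as the first vertex of $C_1$ met when traversing $C$ away from $u$ through $e^*$) is to guarantee $e^*\in E(D_1)$ while $v$ is still reachable along the remaining arc of $C_1$, so that $D_1$ is rooted at $\{e^*,v\}$; one then checks $E(D_1)\subseteq E(C_1)\cup E(C)$, which is immediate from the construction in Lemma~\ref{lem:two_cycles}.

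For the inductive step, the idea is that $C$ interacts with only the "top" part of the ladder near $u$. Since $u\in V(C_1)\setminus V(\bigcup_{i\ge 2}C_i)$ and $|V(C)\cap V(\mathcal{C})|\ge 2$, let $w$ be a vertex of $V(C)\cap V(\mathcal{C})$ other than $u$, and consider the smallest index $j$ with $w\in V(C_j)$. If $j=1$, one can largely reduce to the base-case rerouting on $C_1$ and $C$ to produce $D_1$, then set $D_i=C_i$ for $i\ge 2$, checking that $(D_1,D_2,\dots,D_k)$ is still a tight odd ladder — the fan condition for indices involving $D_1$ must be re-verified, and this is where I expect to lean on odd-$K_3^2$-minor-freeness and odd-$K_4$-minor-freeness to rule out the bad configurations (a third internally disjoint path between two ladder vertices would force an odd-$K_4$ or odd-$K_3^2$ minor). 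If $j>1$, then $C$ meets $C_j$ but not $C_1$; I would use the ladder structure (consecutive $C_i\cap C_{i+1}$ being paths, and the union of consecutive cycles being a subdivided fan) to show that $C\cup C_1\cup\dots\cup C_j$ still forms something fan-like, apply Lemma~\ref{lem:two_cycles} repeatedly (or Lemma~\ref{lem:ladder} inductively to the shorter ladder $(C_1,\dots,C_j)$ together with $C$) to get a rerouted ladder $(D_1,\dots,D_j)$ rooted at $\{e^*, v_j\}$ for an appropriate vertex $v_j$ shared with $C_{j+1}$, and then concatenate with $(C_{j+1},\dots,C_k)$.

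The main obstacle, I expect, is verifying the third ladder axiom — that for every pair $i\le j$ with $V(D_i)\cap V(D_j)\neq\emptyset$, the union $\bigcup_{\ell=i}^{j} D_\ell$ is a subdivision of a fan — after the rerouting. Tightness of each $D_i$ follows cleanly from Lemma~\ref{lem:two_cycles} and from the averaging identity ${\rm val}(C_1)+{\rm val}(C_2)=\sum{\rm val}(E_i)+{\rm val}(\text{new cycle})$ used there; and the rootedness and the containment $E(\mathcal{D})\subseteq E(\mathcal{C})\cup E(C)$ are bookkeeping. But the fan condition is a global structural constraint, and a naive surgery could create a cycle that shortcuts across several rungs in a way that is not a subdivided fan. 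To control this I would argue by contradiction: a violation of the fan condition produces either three internally disjoint paths between two vertices with a parity pattern yielding an odd-$K_4^-$ (hence odd-$K_4$) minor, or a "theta with three odd constituents" configuration giving an odd-$K_3^2$ minor, each excluded by hypothesis. A secondary subtlety is the degenerate situation where $C$ shares with the ladder exactly the edge $e^*$ and one more vertex, or where $w$ coincides with a "hub" vertex of the fan; these should be handled by choosing $w$ canonically (the first vertex of the ladder encountered along $C$ starting from $u$ through $e^*$) so that $e^*$ is automatically placed on the internally-disjoint arc supplied to Lemma~\ref{lem:two_cycles}.
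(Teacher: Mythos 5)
Your skeleton --- induction on $k$, base case from Lemma~\ref{lem:two_cycles}, tracing $C$ from $u$ through $e^*$ to the first ladder vertex encountered, and a case split on where that vertex sits --- is the same as the paper's, and the case where that first vertex lies on $C_1$ is handled essentially correctly. But the inductive step has two genuine gaps. First, when the first non-$u$ contact is with some $C_j$, $j>1$, you propose to induct on the head $(C_1,\dots,C_j)$ and then concatenate with $(C_{j+1},\dots,C_k)$. Besides the fact that $j$ may equal $k$ (if the only ladder vertex of $C$ other than $u$ lies in $C_k$, your ``shorter'' ladder is the whole ladder and the induction is circular), the concatenation is exactly as hard as the original problem: the rerouted cycles now contain the arc $P$ of $C$ and can meet $C_{j+1},\dots,C_k$ in new ways, so the fan axiom at the junction must be re-established from scratch. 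The paper avoids this by inducting from the \emph{other} end, on the tail $(C_{s'},\dots,C_k)$ rooted at $\{x,v\}$ where $x$ is the first ladder vertex met along $C$; since $x\notin V(C_1)$ forces $s'\ge 2$ this tail is strictly shorter, and because $P$ is internally disjoint from the ladder, the rerouted first cycle automatically absorbs $P$ and hence $e^*$, with no splicing step left over.

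Second, and more seriously, you do not address the hard case in which $C$ meets $C_s\cup\dots\cup C_k$ in only the single vertex $x$. There, no application of Lemma~\ref{lem:two_cycles} to a \emph{pair} of tight cycles yields the required tight odd cycle through $e^*$: the paper must first show that $C_1\cup\dots\cup C_s$ is a fan (via odd-$K_4$ exclusion), normalize signs at its hub, trace $C$ onward to the first vertex $y\in V(C_1)$, prove that the two arcs $P$ and $Q$ of $C$ carry the same sign (using both odd-$K_3^2$ and odd-$K_4$ exclusion), and only then certify tightness of a cycle assembled from pieces of \emph{three} tight cycles ($C$, $C_1$, $C_s$) by the three-cycle ${\rm val}$-counting identity of Claim~\ref{claim:ladder}. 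Your contradiction template (``a fan violation yields an odd-$K_4$ or odd-$K_3^2$ minor'') covers only the structural half of this; tightness of the newly assembled cycle is a metric statement about ${\rm val}$ and does not follow from minor exclusion, so this case is missing an essential idea rather than merely a routine verification.
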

\begin{proof}
The statement is clear if  $e^*\in E({\cal C})$.
Hence we assume $e^*\notin E({\cal C})$.
The proof is done by induction on $k$.
If $k=1$, then the statement follows from Lemma~\ref{lem:two_cycles} by $|V(C_1)\cap V(C)|\geq 2$.
Thus we assume $k\geq 2$.

Consider tracing $C$ from $u$ in the direction to $e^*$, and let $x$ be the first vertex in $V({\cal C})$ encountered during the tracing. 
Let $P$ be the path between $u$ and $x$ in $C$ that contains $e^*$, which is internally disjoint from $G[{\cal C}]$.

Suppose that $x\in V(C_1)$.
Then $C_1$ is decomposed into two paths $P_1$ and $P_2$ between $u$ and $x$,
and either $P\cup P_1$ or $P\cup P_2$ is a tight odd cycle by Lemma~\ref{lem:two_cycles}.
Without loss of generality, assume $P\cup P_1$ is a tight odd cycle.
If $E(P\cup P_1)\cap E(C_2)\neq \emptyset$ then $\{P\cup P_1, C_2, \dots, C_k\}$ is a tight odd ladder,
and otherwise  $\{P\cup P_1, C_1, \dots, C_k\}$ is a tight odd ladder. 
The resulting tight odd ladder satisfies the condition of the statement.

 Hence we may  suppose that $x\notin V(C_1)$.
 Let $s$ (resp., $s'$) be the smallest (resp., largest) index $i$ such that $x\in V(C_i)$.
 We first solve the following special case.
 \begin{claim}
 The statement holds if $|V(C)\cap V(C_s\cup \dots \cup C_k)|\geq 2$.
 \end{claim}
 \begin{proof}
 Suppose first that $|V(C)\cap V(C_{s'}\cup \dots \cup C_k)|\geq 2$.
 Then ${\cal C}_x:=(C_{s'}, \dots, C_k)$ is a tight odd ladder rooted at $\{x, v\}$, and $C$ is a tight odd cycle passing through $x$ with $|V(C)\cap V({\cal C}_x)|\geq 2$.
Let $f$ be the edge  in $P$ incident to  $x$.
Since $s'\geq s\geq 2$ by $x\notin V(C_1)$, we can apply induction to get a tight odd ladder ${\cal C}_y$ rooted at $\{f, v\}$ with $E({\cal C}_y)\subseteq E({\cal C}_x)\cup E(C)$.
Since $P$ is internally disjoint from $G[{\cal C}_x]$ 
and only the first cycle of ${\cal C}_y$ contains $f$,
 the first cycle in ${\cal C}_y$ contains $P$ but the remaining cycles do not.
 In particular, $e^*$ is contained only in the first cycle of ${\cal C}_y$, meaning hat ${\cal C}_y$ is also a tight odd ladder rooted at $\{e^*, v\}$.

Hence we may suppose $V(C)\cap V(C_{s'}\cup \dots \cup C_k)=\{x\}$.
Take the maximum index $t$ such that $|V(C)\cap V(C_{t})|\geq 2$.
By our assumption, we have $s\leq t<s'$.
By Lemma~\ref{lem:two_cycles}, $C\cup C_t$ contains a tight odd cycle $C'$ that contains $e^*$.
If $E(C')\cap E(C_{t+1})\neq \emptyset$ then $\{C', C_{t+1}, \dots, C_k\}$ is a tight odd ladder,
and otherwise  $\{C', C_t, \dots, C_k\}$ is a tight odd ladder with the required property.
\end{proof}

Thus we assume $|V(C)\cap V(C_s\cup \dots \cup C_k)|=1$. In particular $|V(C)\cap V(C_s)|=1$.
\begin{claim}
$C_1\cup \dots \cup C_s$ forms a fan
\end{claim}
\begin{proof}
Suppose that $C_1\cup \dots \cup C_s$ does not form a fan.
Then let $t$ be the largest index such that $C_1\cup \dots \cup C_t$ forms a fan.
Then $C_1\cup \dots \cup C_t$ contains an odd-subdivision $H$ of the odd-$K_4^-$ rooted at $\{w, u\}$ for some $w\in C_t$ with $C_1\subseteq H$. 
Since $C_1\cup \dots \cup C_s$ does not form a fan, $C_s$ and $C_1$ are vertex-disjoint.
Hence $C_{t}\cup C_{t+1}\cup \dots \cup C_s\cup P$ contains an even path $P_e$ and an odd path $P_o$ from $u$ to $w$ avoiding $V(C_1)$.  
This implies that $H\cup P_e$ or $H\cup P_o$ forms an odd-subdivision of the odd-$K_4$, contradicting the odd-$K_4$ freeness.
\end{proof}

Hence   $C_1\cup \dots \cup C_s$ forms a fan.
Let $c$ be the center vertex of the fan.
Without loss of generality, we can suppose (by resigning) that there is exactly one odd edge in each $C_i\ (1\leq i\leq s)$, and the odd edge in $C_i$ is incident to $c$.
Note that $c\neq x$ by $x\notin V(C_1)$.

Consider tracing $C$ from $u$ in the direction to $e^*$, and let $y$ be the first vertex in $V(C_1)$ encountered during the tracing. ($y$ should be encountered after $x$.)
Let $Q$ be the path from $x$ to $y$ in $C$ passed during the tracing.
See Figure~\ref{fig:cases}.
Since $|V(C)\cap V(C_s)|=1$, $V(Q)\cap V(C_s)=\{x\}$.
We have two cases.

Suppose that $|V(C_s)\cap V(C_1)|=1$. (See Figure~\ref{fig:cases}(a).)
Then the sign of $P$ should be equal to that of $Q$ since otherwise  $C_1\cup C_s\cup P\cup Q$ forms a minor of the odd-$K_3^2$. As $C$ is odd, this implies $y\neq u$.
Let $R$ be the path between $y$ and $c$ in $C_1$ that avoids $u$, and 
$S$ be the path between $u$ and $c$ in $C_1$ that avoids $y$.
Take the path $T$ between $c$ and $x$ in $C_s$ such that $Q\cup R \cup T$ forms an odd cycle.
\begin{claim}
\label{claim:ladder}
$P\cup S\cup (C_s\setminus T)$ is a tight odd cycle.
\end{claim}
\begin{proof}
Note that $P\cup S\cup (C_s\setminus T)$ forms a cycle.
To see the tightness, recall that every odd edge of $C_i$ is incident to $c$. Hence  $C_1\setminus (R\cup S)$ is even. Hence $R$ and $S$ have different signs.
Since the sign of $P$ is the same as that of $Q$, the sign of $P\cup S\cup (C_s\setminus T)$ is the same as that of $Q\cup R \cup T$, which is odd.
Now $(C_1\setminus (R\cup S)\cup (C\setminus (P\cup Q))$ is Eulerian, which can be decomposed into cycles $E_1, \dots, E_{\ell}$. 
Since $P$ and $Q$ have the same sign,  $C\setminus (P\cup Q)$ is an odd path.
Hence there is at least one odd cycle among  $E_1, \dots, E_{\ell}$.
Therefore, 
$3={\rm val}(C)+{\rm val}(C_1)+{\rm val}(C_k)={\rm val}(P\cup S\cup (C_s\setminus T))+{\rm val}(Q\cup R \cup T)+\sum_{i=1}^{\ell} {\rm val}(E_i)\geq 3$, implying that ${\rm val}(P\cup S\cup (C_s\setminus T))=1$.
In other words, $P\cup S\cup (C_s\setminus T)$ forms a tight odd cycle. 
\end{proof}

Hence, depending on whether $E(P\cup S\cup (C_s\setminus T))\cap E(C_{s+1})=\emptyset$ or not, 
either $\{P\cup S\cup (C_s\setminus T), C_s, C_{s+1}, \dots, C_k\}$ or $\{P\cup S\cup (C_s\setminus T), C_{s+1}, \dots, C_k\}$ forms a tight odd ladder rooted at $\{e^*, v\}$.

Suppose finally that $|V(C_s)\cap V(C_1)|>1$. (See Figure~\ref{fig:cases}(b), where the vertical edge at the center may be two parallel paths.) 
Observe first that $P$ and $Q$ must have the same sign, since otherwise $C_1\cup C_k\cup P$ or $C_1\cup C_k\cup Q$ contains an odd-subdivision of the odd-$K_4$.
Based on this fact, the same proof as Claim~\ref{claim:ladder} implies that  there is a path $U$ from $u$ to $x$ in $C_1\cup C_k$ such that $P\cup U$ forms a tight odd cycle, and $\{P\cup U, C_s, C_{s+1}, \dots, C_k\}$ or $\{P\cup U, C_{s+1}, \dots, C_k\}$ forms a tight odd ladder rooted at $\{e^*, v\}$.
This completes the proof of the lemma.
\end{proof}

\begin{figure}
\centering
\begin{minipage}{0.45\textwidth}
\centering
\includegraphics[scale=0.7]{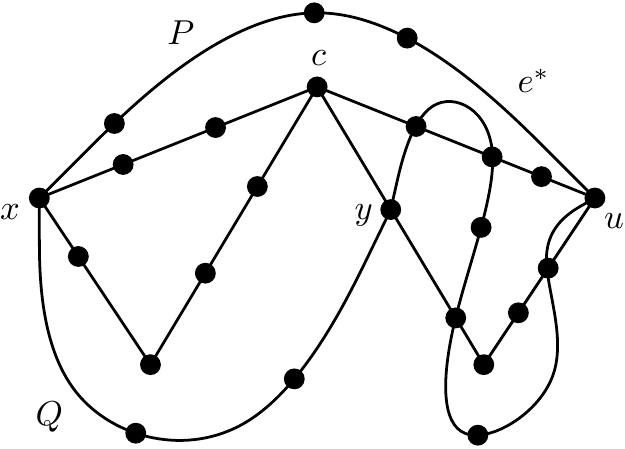}
\par
(a)
\end{minipage}
\begin{minipage}{0.45\textwidth}
\centering
\includegraphics[scale=0.7]{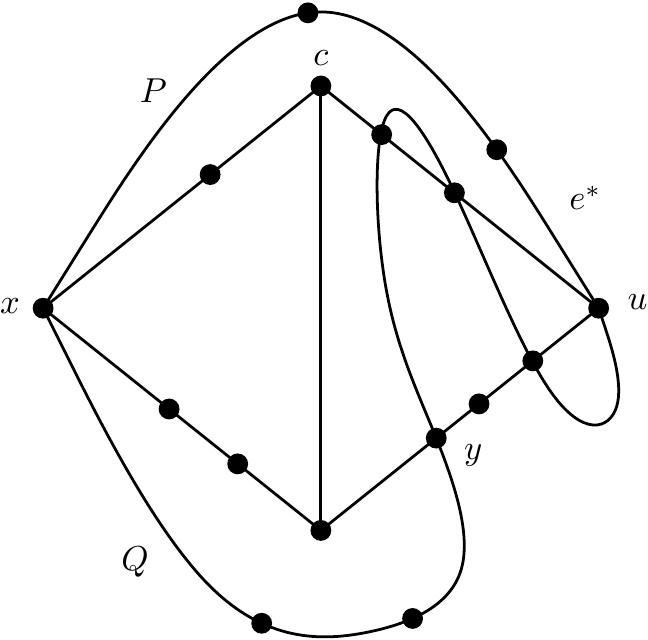}
\par
(b)
\end{minipage}
\caption{Proof of Lemma~\ref{lem:ladder}.}
\label{fig:cases}
\end{figure}

\begin{lemma}
\label{lem:ear_reduction}
Let $(G,\Sigma)$ be a signed graph which is odd-$K_3^2$ minor and odd-$K_4$ minor free with $|V(G)|\geq 3$, 
and let $c\in [-1,1]^E$ with  ${\rm arccos}(c)/\pi\in {\rm MET}(G,\Sigma)$.
Also let ${\cal C}$ be the family of tight odd cycles with respect to $c$.
Suppose that ${\cal H}({\cal C})$ consists of just one hyperedge spanning all the vertices. Then the following hold.
\begin{itemize}
\item $G$  has a path $P$ satisfying the following properties:
\begin{itemize}
\item it is contained in a tight odd cycle and its length is at least two,
\item ${\cal H}({\cal C}')$ consists of just one hyperedge spanning all the vertices of $G'$, where ${\cal C}'$ is the family of tight odd cycles in the path-reduction $(G', \Sigma', c')$ through $P$.
\end{itemize}
\item For any $u, v\in V(G)$, there is a tight odd ladder   rooted at $\{u,v\}$.
\end{itemize}
\end{lemma}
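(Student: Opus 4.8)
The two bullets are closely linked, so the plan is to prove the second one first and then extract the first from it. For the \emph{tight odd ladder} claim, I would argue as follows. Since ${\cal H}({\cal C})$ is a single hyperedge spanning $V(G)$, Lemma~\ref{lem:K32acyclic2}'s argument (adapted to tight cycles) should be used in an iterative merging fashion: walk through the sequence of tight odd cycles whose successive $\ge 2$-vertex overlaps build the hyperedge, and at each step glue a new tight odd cycle onto the current tight odd ladder. The gluing step is exactly what Lemma~\ref{lem:ladder} provides: given a tight odd ladder rooted at $\{u,v\}$ and a tight odd cycle $C$ through $u$ with $|V(C)\cap V({\cal C})|\ge 2$, one produces a new tight odd ladder rooted at $\{e^*,v\}$ for an edge $e^*$ of $C$ at $u$, hence (since the new ladder's first cycle is $C$-related) effectively a ladder reaching any prescribed vertex of $C$. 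So the induction is: order the tight cycles $C_{i_1},\dots,C_{i_m}$ forming the hyperedge; start with the single cycle $C_{i_1}$ as a trivial ladder rooted at any two of its vertices; and repeatedly apply Lemma~\ref{lem:ladder} (with the roles of $u$ and the new cycle chosen so the root follows along) to absorb $C_{i_2},\dots$, until the union of the ladder's cycles touches both $u$ and $v$. Odd-$K_4$- and odd-$K_3^2$-minor-freeness are needed precisely to invoke Lemma~\ref{lem:ladder}, and they hold by hypothesis.

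For the first bullet (existence of the path $P$), I would take the tight odd ladder ${\cal C}=(C_1,\dots,C_k)$ rooted at some pair $\{u,v\}$ produced above, choose $C_1$, and look at the part of $C_1$ ``hanging off'' the rest of the structure. Concretely: since ${\cal H}({\cal C})$ is one hyperedge spanning all of $V(G)$, and by acyclicity of ${\cal H}({\cal C})$ (Lemma~\ref{lem:K32acyclic}), there is a tight odd cycle, say $C_1$, that shares with $\bigcup_{i\ge 2}C_i$ (equivalently, with the rest of $G[{\cal C}]$) only a single bounded-overlap piece — concretely a subpath. Then the complementary subpath $P$ of $C_1$ is internally disjoint from the rest of $G[{\cal C}]$, is contained in the tight odd cycle $C_1$, and — provided $C_1$ is not a 2-cycle — has length $\ge 2$ (if it had length $1$ we would contract, but the family of tight cycles includes triangles only via Lemma~\ref{K3}; in the generic step we can assume $|E(C_1)|\ge 3$ so $P$ has $\ge 2$ edges). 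It then remains to check that the path-reduction through $P$ preserves the ``single hyperedge spanning all vertices'' property of ${\cal H}$: after reduction, the internal vertices of $P$ are gone, the new even edge $uv$ lies on the reduced image of $C_1$ which is still a tight odd cycle $C_1'$, and every other tight odd cycle survives verbatim; the overlap structure among the surviving cycles is unchanged and $C_1'$ overlaps them exactly where $C_1$ did, so merging still produces a single hyperedge, now on $V(G')$. I also need to argue no \emph{new} tight odd cycle is created by the reduction that could change ${\cal H}$; but any tight odd cycle of $(G',\Sigma',c')$ using the new edge $uv$ pulls back (replacing $uv$ by $P$) to a tight odd cycle of $(G,\Sigma,c)$ of the same value, so this adds nothing new to the merge.

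The main obstacle I anticipate is the bookkeeping in the first bullet: one must locate a tight odd cycle $C_1$ in ${\cal C}$ and a subpath $P$ of it that is genuinely ``pendant'' with respect to the union of all the other tight odd cycles — not merely the other ladder cycles — and simultaneously ensure $|E(P)|\ge 2$. This requires combining the acyclicity of ${\cal H}({\cal C})$ with the fan/ladder structure from Lemma~\ref{lem:ladder}; in particular, one may need to first pass to a \emph{maximal} tight odd ladder and identify an ``extreme'' cycle $C_1$ at one end whose non-shared arc is the desired $P$. A secondary subtlety is handling degenerate lengths: a tight odd cycle could be a 2-cycle (parallel odd/even pair), in which case there is no pendant path of length $\ge 2$ inside it; here one argues that if \emph{every} tight odd cycle through the relevant vertices is a 2-cycle or triangle then we are already in the base situation handled by Lemmas~\ref{K3} and~\ref{cycle}, so in the inductive step of the main proof one may assume a longer tight odd cycle exists and pick $C_1$ among those. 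Once $P$ is correctly chosen, verifying the two required properties of $P$ is routine via the value-preservation of path-reduction recorded in~\eqref{eq:path_reduction}.
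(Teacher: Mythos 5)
Your plan has the logical dependency between the two bullets inverted, and this is where the real gaps lie. The paper proves both bullets by a \emph{simultaneous} induction on $|V(G)|$: the first bullet is proved using the second bullet applied to smaller vertex sets, and the second bullet is then proved by applying induction to the path-reduction through the path $P^*$ produced by the first bullet. You instead propose to prove the second bullet directly, by iterating Lemma~\ref{lem:ladder} over the cycles whose overlaps build the hyperedge, and then to read off the first bullet. That direct argument does not go through. Lemma~\ref{lem:ladder} only glues a tight odd cycle $C$ that \emph{passes through the current root} $u$ onto a ladder rooted at $\{u,v\}$, and what it returns is a ladder rooted at $\{e^*,v\}$ for an edge $e^*$ of $C$ at $u$; it gives you no way to re-root a ladder at an arbitrary vertex of its first cycle (an endpoint of $e^*$ need not lie only in the first cycle of the new ladder), no way to absorb a cycle that meets the ladder in two vertices away from the root, and no way to concatenate two ladders sharing a vertex. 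Since the tight odd cycles forming a single hyperedge are merged via pairwise $\geq 2$-vertex overlaps of \emph{vertex sets} and in general form a branching (tree-like) arrangement rather than one linear chain, your iteration does not reach an arbitrary pair $\{u,v\}$. The paper's treatment of exactly this difficulty is the case analysis at the end of its proof, where a ladder rooted at $\{e^*,u\}$ (for the new edge $e^*$ of the path-reduction) is built from ladders rooted at $\{s,u\}$ and $\{t,u\}$ together with a tight odd cycle through $e^*$, using odd-$K_4$- and odd-$K_3^2$-exclusion to rule out the bad configurations; nothing in your proposal substitutes for that step.

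For the first bullet, the ``pendant cycle'' you want is not justified: when ${\cal H}({\cal C})$ is a single hyperedge its acyclicity is vacuous and says nothing about how the tight odd cycles inside that hyperedge overlap, so the existence of a cycle meeting the union of \emph{all} the others in a single subpath does not follow (you yourself flag this as the main obstacle). The paper's mechanism is different and essential: grow a subfamily ${\cal D}$ greedily by adding tight odd cycles meeting $V({\cal D})$ in $\geq 2$ vertices, prove the nontrivial claim that ${\cal D}$ ends up spanning $V(G)$ (this uses the second bullet's induction hypothesis on the smaller graph $G[{\cal D}']$, Lemma~\ref{lem:ladder}, and both minor-exclusions), and take $P$ to be a maximal arc, internally disjoint from $G[{\cal D}]$, of the cycle added at the first moment the union spans $V(G)$. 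That $P$ automatically has length $\geq 2$ because it contains an internal vertex outside $V({\cal D})$, so your worry about $2$-cycles does not arise, but the pendancy is only relative to the partial family at that moment, not to all tight odd cycles. Your observation that tight odd cycles through the new edge pull back to tight odd cycles of $(G,\Sigma,c)$ is correct and matches the paper, but it is the only part of the plan that survives; as written, the proposal is missing the two central arguments of the proof.
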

\begin{proof}
We prove the two claims  by induction on $|V(G)|$ simultaneously. 
In the base (i.e., $|V(G)|=3$), since $(G,\Sigma)$ is odd-$K_3^2$ minor free and ${\cal H}({\cal C})$ consists of just one hyperedge spanning all the vertices, 
$G$ contains a tight odd triangle.  
Thus the second claim holds. Also any two edges in the tight odd triangle  forms a path satisfying the property of the first claim. 

Thus we assume $|V(G)|\geq 4$.
To see the first claim, we consider the following process for constructing a family ${\cal D}$ of tight odd cycles:
\begin{itemize}
\item First, take any tight odd cycle $C_1$ in $G$ and set ${\cal D}=\{C_1\}$; 
\item Then, greedily add a new tight odd cycle $C'$ of $G$ to ${\cal D}$ if $C'\notin {\cal D}$ and  $|V(C')\cap V({\cal D})|\geq 2$. Repeat until every tight odd cycle $C'\notin {\cal D}$ satisfies $|V(C')\cap V({\cal D})|\leq 1$.
\end{itemize}
If the first tight odd cycle $C_1$ in the process spans $V(G)$, then any two consecutive edges forms a path with the desired property. Thus we assume that $V(C_1)\neq V(G)$ at the beginning of the process.
\begin{claim}
\label{claim:ear1}
At the end of the process, $V({\cal D})=V(G)$.
\end{claim}
\begin{proof}
Suppose that  ${\cal D}$ does not span $V(G)$ at the end.
Since ${\cal H}({\cal C})$ consists of just one hyperedge spanning all the vertices (by the lemma assumption), $(G,\Sigma,c)$ has a family ${\cal D}'$ of tight odd cycles disjoint from ${\cal D}$  with  $|V({\cal D'})\cap V({\cal D})|\geq 2$. Due to the maximality of ${\cal D}$, we have   $E({\cal D})\cap E({\cal D}')=\emptyset$.
Also $V({\cal D'})\neq V(G)$.
Let $X=V({\cal D})\cap V({\cal D}')$.

Suppose that there are two distinct vertices $u, v\in X$ which are contained in a tight odd cycle $C$ in $G[{\cal D}]$. By induction $G[{\cal D}']$ contains a tight odd ladder $(D_1, \dots, D_k)$  rooted at $\{u,v\}$. 
Since $\{u,v\}\subseteq V(C)\cap V({\cal D}')$, Lemma~\ref{lem:ladder} implies that 
there is a tight odd cycle $D_1$ intersecting both $E(C)$ and $E({\cal D}')$ (by taking the first cycle in the sequence of cycles obtained by Lemma~\ref{lem:ladder}).
Since $E({\cal D})\cap E({\cal D}')=\emptyset$, this further implies that $D_1'\notin {\cal D}$. 
However, since  $|V(D_1')\cap V({\cal D})|\geq 2$, this contradicts the maximality of ${\cal D}$.

Thus $G[{\cal D}]$ contains no tight odd cycle intersecting more than one vertex in $X$.
We take two distinct vertices $s$ and $t$ in $X$ such that the sequence  of a tight odd ladder  rooted at $\{s, t\}$ is as short as possible in $G[{\cal D}]$.
Let  ${\cal C}_{st}=(C_1, \dots, C_{\ell})$ be a tight odd ladder at $\{s,t\}$ with smallest $\ell$.
Since $G[{\cal D}]$ contains no tight odd cycle intersecting more than one vertex in $X$, 
the minimality of $|{\cal C}_{st}|$ implies that $V({\cal C}_{st})\cap X=\{s,t\}$.
Since $G[{\cal D}']$ contains a tight odd radder rooted at $\{s,t\}$, 
$G[{\cal D}']$ has an odd path $P_1$ and an even path $P_2$ between $s$ and $t$ (which may  not be internally disjoint).
By $V({\cal C}_{st})\cap X=\{s,t\}$, $P_i$ is internally disjoint from $G[{\cal C}_{st}]$ for each $i=1,2$.
Thus, by $\ell\geq 2$, $P_1\cup G[{\cal C}_{st}]$ or $P_2\cup G[{\cal C}_{st}]$ contains an odd-subdivision of the odd-$K_4$, contradicting the assumption.
\end{proof}

By Claim \ref{claim:ear1}, $V({\cal D})=V(G)$ at the end.
Since $V(C_1)\neq V(G)$, 
this means that, during the above greedy process of constructing ${\cal D}$, there was a moment at which we find a tight odd cycle $C'$ with $|V(C')\cap V({\cal D})|\geq 2$, 
$V({\cal D})\neq V(G)$, and $V(C')\cup V({\cal D})=V(G)$.
Let $P$ be a maximal path in $C'$ internally disjoint from $G[{\cal D}]$. 
Then $P$ is a path claimed in the statement.
This completes the proof of the first claim.

\medskip

To see the second claim, we take a path $P^*$ proved in the first claim.
Let $e^*$ be the new edge in the path-reduction $(G', \Sigma', c')$ through $P^*$, and let $s, t$ be the endvertices of $e^*$.
(Hence $s$ and $t$ are the endvertices of $P^*$.)
 By induction, for any $u, v$ in $G'$, there is a tight odd ladder rooted at $\{u,v\}$ in the path-reduction $(G', \Sigma', c')$.
For any tight odd cycle $C$ passing through $e^*$, $(C-e^*)\cup P$ is odd and tight due to the definition of $c'$.
Hence a tight odd ladder  rooted at $\{u,v\}$ in $(G', \Sigma', c')$ can be modified to be a tight odd ladder rooted at $\{u,v\}$ in $(G, \Sigma, c)$.
 Thus, what remains to show  the second claim is that 
 $(G, \Sigma, c)$ has a tight odd ladder  rooted at $\{u,v\}$ for  any $u\in V(P^*)\setminus \{s,t\}$ and $v\in V(G)$.
To see this, it suffices to show that for any $u\in V(G')$  $(G',\Sigma',c')$ has a 
tight odd ladder rooted at $\{e^*,u\}$.

Take any tight odd ladder ${\cal C}_s=(C_1,\dots, C_k)$ rooted at $\{s, u\}$
and any  tight odd ladder ${\cal D}_t=(D_1,\dots, D_{\ell})$  rooted at $\{t, u\}$ in $(G', \Sigma', c')$.
If $(G', \Sigma', c')$ has a tight odd cycle $C$ that contains $e^*$ and satisfies $|V(C)\cap V({\cal C}_s)|\geq 2$,
then we can get a tight odd ladder rooted at $\{e^*,u\}$  by Lemma~\ref{lem:ladder}. 
Thus we may assume that any tight odd cycle $C$ that contains $e^*$ satisfies 
$|V(C)\cap V({\cal C}_s)|=1$.
By the same reason, we may assume that any tight odd cycle $C$ that contains $e^*$
satisfies $|V(C)\cap V({\cal D}_t)|=1$.



Recall that there is at least one tight odd cycle $C$ that contains $e^*$ in $(G', \Sigma', c')$.
By the above assumption we have
\begin{equation}
\label{eq:path_reduction}
V(C)\cap V({\cal C}_s)=\{s\} \text{ and } V(C)\cap V({\cal D}_t)=\{t\}.
\end{equation}
If $|V(C_1)\cap V(D_1)|\leq 1$, then take a  path $P$ in $G[{\cal C}_s]\cup G[{\cal D}_t]$ connecting between $V(C_1)$ and $V(D_1)$ and internally disjoint from $G[C_1]\cup G[D_1]$ (where $P$ is empty if $|V(C_1)\cap V(D_1)|=1$).
Then by (\ref{eq:path_reduction}) $P\cup C_1\cup D_1\cup C$ forms a minor of the odd $K_3^2$, which is a contradiction.
Hence $|V(C_1)\cap V(D_1)|\geq 2$. 
Then by Lemma~\ref{lem:two_cycles} 
$C_1\cup D_1$ contains either an odd-subdivision of the odd-$K_4^-$ rooted at $\{s,t\}$
or a tight odd cycle passing through $s$ and $t$. 
If $C_1\cup D_1$ contains an odd-subdivision of the odd-$K_4^-$ rooted at $\{s,t\}$,
then by (\ref{eq:path_reduction}) $C_1\cup D_1\cup C$ contains an odd-subdivision of the odd-$K_4$, which is a contradiction.
Thus $C_1\cup D_1$ contains a tight odd cycle $D$ passing through $s$ and $t$.
Then Lemma~\ref{lem:two_cycles} implies that $C\cup D$ contains a tight odd cycle $D'$ with 
$e^*\in E(D')$ and  $E(D)\cap E(D')\neq \emptyset$ (and hence $(E({\cal C}_s)\cup E({\cal D}_t))\cap E(D')\neq \emptyset$).
The existence of such tight odd cycle $D'$ contradicts the assumption that any tight odd cycle that contains $e^*$ is edge-disjoint from $G[{\cal C}_s]\cup G[{\cal D}_t]$.
This completes the proof of the second claim.
\end{proof}

\subsubsection{Constructing maximum rank completions}
\label{subsub:K32}
Recall that an edge $e$ is said to be degenerate if $c(e)=\sigma(e)$.
We first solve the case when every edge is tight or nondegenerate. 

\begin{theorem}
\label{lem:l1embedding}
Let $(G,\Sigma)$ be a 2-connected signed graph which is odd-$K_3^2$ minor free and odd-$K_4$ minor free, 
and let $c\in [-1,1]^E$ with ${\rm arccos}(c)/\pi\in {\rm MET}(G,\Sigma)$. 
Suppose that every edge is tight or degenerate.
Then ${\rm P}(G,\Sigma, c)$ has a unique solution that has rank at most two.
\end{theorem}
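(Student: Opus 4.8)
The plan is to reduce the statement to the two special cases already handled in Subsection~\ref{subsec:K32}, namely tight odd cycles (Lemma~\ref{cycle}) and the behaviour of degenerate edges, and to glue the pieces together with Lemma~\ref{lem:glueing}. First I would dispose of the degenerate edges: if $e=ij$ is degenerate, say $c(e)=\sigma(e)$, then after an appropriate resigning we may assume $e$ is even with $c(e)=1$, so every feasible $X$ has $X[i,j]=1$ and the rows $i$ and $j$ of any Gram representation coincide; thus the problem on $(G,\Sigma,c)$ is equivalent to the one on $(G/e,\Sigma,c)$ obtained by contracting $e$, and the contracted graph is still odd-$K_3^2$ minor free and odd-$K_4$ minor free (though possibly no longer $2$-connected). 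Hence I may assume \emph{every} edge is tight and nondegenerate, at the cost of allowing cut vertices; since a solution can be built block by block and the rank of a block-diagonal-up-to-shared-rows matrix is controlled, it suffices to treat each $2$-connected block separately, so I will assume $(G,\Sigma)$ is $2$-connected with every edge tight and nondegenerate.

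Now let $\mathcal C$ be the family of tight odd cycles; since every edge is tight, $E(\mathcal C)=E(G)$, and since $(G,\Sigma)$ is $2$-connected and odd-$K_3^2$ minor free, I expect that $\mathcal H(\mathcal C)$ is a single hyperedge spanning $V(G)$: if $\mathcal H(\mathcal C)$ had two distinct hyperedges, the $2$-connectivity would force two internally disjoint paths between them, and combining these with odd cycles from each hyperedge (using Lemma~\ref{lem:K32acyclic2} to produce odd cycles or odd-$K_4^-$ rooted minors through the two attachment vertices) would yield an odd-$K_4$ minor or an odd-$K_3^2$ minor, a contradiction. With $\mathcal H(\mathcal C)$ a single spanning hyperedge I am exactly in the hypothesis of Lemma~\ref{lem:ear_reduction}, and I would induct on $|V(G)|$ using the path $P$ produced by the first bullet of that lemma: the path-reduction $(G',\Sigma',c')$ through $P$ is a smaller signed graph (still odd-$K_3^2$ and odd-$K_4$ minor free, every edge still tight), with $\mathcal H(\mathcal C')$ a single spanning hyperedge, so by induction $\mathrm P(G',\Sigma',c')$ has a unique solution $X'$ of rank at most two, realised by points $p':V(G')\to\mathbb S^{1}$. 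Meanwhile the path $P$ together with its new edge $e^*=uv$ forms a tight odd cycle to which Lemma~\ref{cycle} applies, giving a unique rank-two solution $\tilde X$ on $V(P)$ with a nowhere-zero nice dual $\tilde\omega$; since both $X'$ and $\tilde X$ are rank-two realisations on the circle that agree on the entry $e^*$ (both equal $c'(uv)$, determined by $\mathrm{arccos}(c'(uv))=\sum_{e\in E(P)}\mathrm{arccos}(c(e))$), and any rank-$\le 2$ solution agreeing on an edge value extends uniquely along a path of tight even edges, I can glue $X'$ and $\tilde X$ along their common vertices $\{u,v\}$ into a single matrix $X$ of rank at most two that is feasible for $\mathrm P(G,\Sigma,c)$.

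Uniqueness I would get from the dual side. On the reduced instance, by induction there is a nice dual $\omega'$; on the triangle-like cycle $P\cup e^*$ there is the nowhere-zero dual $\tilde\omega$ from Lemma~\ref{cycle}; after scaling so that $\omega'(e^*)+\tilde\omega(e^*)=0$ (legitimate because $e^*$ is even on one side and odd on the other, exactly as in the proof of Lemma~\ref{cycle}), Lemma~\ref{lem:glueing} tells me $\Omega'+\tilde\Omega$ is a nice dual for $\mathrm P(G,\Sigma,c)$, and moreover it is nonzero on every edge of $G$ — on the internal edges of $P$ because $\tilde\omega$ is nowhere zero there, and on the other edges because $\omega'$ is, which follows by propagating the inductive claim (strengthening the induction hypothesis to record that the nice dual is nowhere zero on the current block). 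A nice dual that is nonzero on every edge forces $Y[i,j]=c(ij)$ for every feasible $Y$ and every edge, and since every vertex lies on a tight odd cycle whose tightness equation then pins down the remaining entries, $Y$ is uniquely determined; combined with the rank-$\le 2$ solution constructed above this proves the theorem. The main obstacle I anticipate is the bookkeeping in the gluing step: making sure that the rank does not exceed two (i.e.\ that the two circle realisations can genuinely be placed in a common $\mathbb R^2$, which is where the path of tight \emph{even} edges and the internal disjointness of $P$ from $G[\mathcal D]$ are essential) and that the $2$-connectivity hypothesis is correctly restored after the contraction of degenerate edges is handled — most plausibly by proving the $2$-connected case first and then reassembling blocks at cut vertices, observing that a rank-$\le 2$ solution on each block can be rotated independently so the union still has rank $\le 2$.
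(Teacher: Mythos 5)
Your overall skeleton coincides with the paper's: contract degenerate edges, reduce to the case where every edge is tight, show $\mathcal H(\mathcal C)$ is a single spanning hyperedge via $2$-connectivity and acyclicity, invoke Lemma~\ref{lem:ear_reduction} to get a reduction path $P$, induct on the path-reduction, and glue the inductive solution with the rank-two realisation of the tight cycle containing $P$. However, there is a genuine gap at the decisive step, which is exactly where the paper spends most of its effort: the \emph{feasibility of the glued configuration on the edges joining internal vertices of $P$ to the rest of the graph}. Let $F=\{uv\in E(G): u\in V(P)\setminus\{s,t\},\ v\in V(G)\setminus V(P)\}$. These edges belong to $G$ (they are tight, hence lie on tight odd cycles), but they appear neither in the path-reduction $(G',\Sigma',c')$ (whose vertex set omits the interior of $P$) nor in the cycle $P\cup\{e^*\}$. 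Gluing $X'$ with $\tilde X$ along $\{s,t\}$ does produce positions $p(u)$ for all vertices, but nothing in your argument controls $\sigma(uv)\,p(u)\cdot p(v)$ against $c(uv)$ for $uv\in F$; you simply assert the glued matrix "is feasible for $\mathrm P(G,\Sigma,c)$". The paper's proof of this point is nontrivial: it splits into the cases $|F|\ge 2$ (handled by deleting one edge of $F$, continuously tightening the rest, and invoking uniqueness of the solution on $G-F$) and $|F|=1$ (handled by adjoining an auxiliary even edge $e_{sv}$ to $G'$, applying the induction hypothesis to the enlarged graph, and a spherical-geodesic computation showing $p(u)$ lies on the arc from $p(s)$ to $p(v)$ so that in fact $p(u)\cdot p(v)=c(uv)$). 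Without some version of this, your construction only solves $\mathrm P(G-F,\Sigma\setminus F,c)$.

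The same omission invalidates your uniqueness argument as stated: you claim $\Omega'+\tilde\Omega$ is "nonzero on every edge of $G$", but it is identically zero on $F$, since neither $\omega'$ nor $\tilde\omega$ has support there; so complementary slackness does not pin down $Y[u,v]$ for $uv\in F$. (Uniqueness itself can be rescued more cheaply, and this is what the paper does on the primal side: the solution of $\mathrm P(G-F,\Sigma\setminus F,c)$ is already unique by induction plus the unique extension of $p'$ along the tight path $P$, and adding the constraints of $F$ can only shrink the feasible set. But that still presupposes the feasibility on $F$ that is missing.) Your peripheral points --- that contraction of degenerate edges may destroy $2$-connectivity and should be repaired by working block by block, and that the single-spanning-hyperedge claim follows from $2$-connectivity and acyclicity of $\mathcal H(\mathcal C)$ --- are fine and match the paper. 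The proof is incomplete until the $F$-edges are dealt with.
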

\begin{proof}
The proof is done by induction on $|E(G)|$.
If  $(G,\Sigma)$ has a degenerate edge, we apply the contraction-resigning argument.
Specifically, if there is an even edge $e$ with $c(e)=1$,  we contract it (and remove the resulting loops); if there is an odd edge $e$ with $c(e)=-1$, we contract it after making it even by resigning. 
In the resulting problem ${\rm P}(G', \Sigma', c')$, every edge is still tight or degenerate. 
Hence by induction ${\rm P}(G', \Sigma', c')$ has a unique solution of rank  at most two. 
Let ${\rm Gram}(p)$ be this solution with $p:V(G')\rightarrow \mathbb{S}^1$. 
We then backtrack the resigning-contraction process, extending $p$ to a solution of  ${\rm P}(G, \Sigma, c)$ as follows:
if $u$ and $v$ are contracted to $w$, then let $p_u\leftarrow p_w$ and $p_v\leftarrow p_w$;
if a resigning operation was done with respect to a vertex set $S$ then let $p_v\leftarrow -p_v\ (v\in S)$.
This extension preserves uniqueness.

Therefore, we may focus on  the case when every edge is tight.
Let ${\cal C}$ be the collection of all tight odd cycles.
\begin{claim} 
${\cal H}({\cal C})$ consists of just one hyperedge spanning all the vertices.
\end{claim}
\begin{proof}
Lemma~\ref{lem:K32acyclic} implies that ${\cal H}({\cal C})$ is acyclic,
and we also have $|e\cap e'|\leq 1$ for any distinct hyperedges in ${\cal H}({\cal C})$.
Therefore if ${\cal H}({\cal C})$ has more than one hyperedge then $(G, \Sigma)$ cannot be 2-connected.
\end{proof}

If $|V(G)=2$, the the statement is trivial. 
Thus we may suppose that $|V(G)|\geq 3$.
By Lemma~\ref{lem:ear_reduction} there is a path $P$ of length at least two such that 
$P$ is contained in a tight odd cycle and ${\cal H}({\cal C}')$ consists of just one hyperedge spanning all the vertices in $G'$, where ${\cal C}'$ is the family of tight odd cycles in the path-reduction $(G', \Sigma', c')$ through $P$.
Let $s, t$ be the endvertices of $P$ and let $e_{st}$ be the edge in $E(G')\setminus E(G)$.
Without loss of generality (by resigning) we may assume that every edge in $P$  is even.

Since $|E(G')|< |E(G)|$, by induction there is a map $p':V(G')\rightarrow \mathbb{S}^1$ such that 
${\rm Gram}(p')$ is a unique solution of ${\rm P}(G', \Sigma', c')$.
 Since $e_{st}$ is contained in a tight odd cycle, we have 
 $p'(s)\cdot p'(t)=c'(e_{st})$.
 Hence 
 \begin{equation}
 \label{eq:l1_1}
 {\rm arccos}(p'(s)\cdot p'(t))={\rm arccos}(c'(e_{st}))=\sum_{e\in E(P)} {\rm arccos}(c(e))<\pi
 \end{equation}
 where the second equation follows from the definition of path-reduction and the third inequality follows from the fact that $P$ is contained in a tight cycle and $c$ is nondegenerate.
 This equation determines a unique extension of $p'$ to $p:V(G)\rightarrow \mathbb{S}^1$ such that 
 \begin{equation}
 \label{eq:em0}
 p(i)\cdot p(j)=c(ij) \quad \text{ for every $ij$ in  $P$}.
  \end{equation}
  We show that the resulting ${\rm Gram}(p)$ is feasible in ${\rm P}(G, \Sigma, c)$.
  Let $F=\{e=uv\in E(G): u\in V(P)\setminus \{s,t\}, v\in V(G)\setminus V(P)\}$.
  To see the feasibility of ${\rm Gram}(p)$, it suffices to show that 
  $\sigma(uv)p(u)\cdot p(v)\geq \sigma(uv) c(uv)$ for any edge $uv\in F$.
 
 Take any $e=uv\in F$.
 Suppose that $|F|\geq 2$. Then take an edge $e'$ from  $F\setminus \{e\}$,
 and consider ${\rm P}(G-e', \Sigma\setminus \{e'\}, c)$.
 If $(G-e, \Sigma\setminus \{e'\},c)$ contains an edge $f$ which is neither tight not degenerate.
 Then $f\in F$. We continuously increase (resp., decrease)  the value of $c(f)$ if $f$ is even (resp., odd) 
 until $f$ becomes tight or degenerate. 
 We repeatedly perform this process until every edge becomes tight or degenerate, and let $\bar{c}$ be the resulting edge vector.  
  By induction, ${\rm P}(G-e', \Sigma\setminus \{e'\}, \bar{c})$ has a solution.
  Since $G-F$ is a subgraph of $G-e'$ and ${\rm Gram}(p)$ is a unique solution of ${\rm P}(G-F, \Sigma\setminus F, c)$, ${\rm Gram}(p)$ is also a unique solution of ${\rm P}(G-e', \Sigma\setminus \{e'\}, \bar{c})$.
  Thus we get $\sigma(uv)p(u)\cdot p(v)\geq \sigma(uv) \bar{c}(uv)\geq \sigma(uv)c(uv)$ for edge $e=uv$.
  
  Therefore we may suppose $|F|=1$.
  Since every edge is tight, there is a tight odd cycle that passes through $e$. 
  Since $|F|=1$, this cycle must passes through $s$ or $t$.
  Without loss of generality, we assume that the tight cycle passes through $s$.
  Let $P_s$ be the subpaths of $P$ between $s$ and $u$.
%
Without loss of generality we assume that $e=uv$ is even.
  Let $(\tilde{G}, \Sigma')$ be the signed graph obtained from 
  $(G', \Sigma')$ by inserting a new even edge $e_{sv}$ between $s$ and $v$.
  Note that $(\tilde{G}, \Sigma')$ is still odd-$K_4$ minor and odd-$K_3^2$ minor free.
 We extend $c'$ to $\tilde{c}:E(\tilde{G})\rightarrow [-1,1]$ by setting  
 \[
 {\rm arccos}(\tilde{c}(e_{sv}))={\rm arccos}(c(uv))+\sum_{e\in E(P_s)}{\rm arccos}(c(e))
 \]
 for the new edge $e_{sv}$.
Then ${\rm arccos}(\tilde{c})/\pi \in {\rm MET}(\tilde{G}, \Sigma')$.
Hence by induction ${\rm P}(\tilde{G}, \Sigma', \tilde{c})$ is feasible.
Since ${\rm P}(G', \Sigma', c')$ has a unique solution and $G'\subseteq \tilde{G}$, ${\rm Gram}(p')$ is the solution of ${\rm P}(\tilde{G}, \Sigma', \tilde{c})$.
This in turn implies that 
\begin{equation}
\label{eq:em1}
p(s)\cdot p(v)=\tilde{c}(sv).
\end{equation}
Also, since there is a tight cycle passing through $s, u, v$ and all the edges in the path between $s$ and $v$ are even, $p(u)$ lies on the spherical line segment between $p(s)$ and $p(v)$.
In other words, 
\begin{equation}
\label{eq:em2}
{\rm arccos}(p(v)\cdot p(u))+{\rm arccos}(p(u)\cdot p(s))={\rm arccos}(p(v)\cdot p(s)).
\end{equation}
Therefore, 
\begin{align*}
{\rm arccos}(p(v)\cdot p(u))&=
{\rm arccos}(p(v)\cdot p(s))-{\rm arccos}(p(s)\cdot p(u)) & (\text{by (\ref{eq:em2})}) \\
&={\rm arccos}(p(v)\cdot p(s))-\sum_{ij\in E(P_{s})} {\rm arccos}(p(i)\cdot p(j)) &  \\
&= {\rm arccos}(\tilde{c}(vs))-\sum_{ij\in E(P_s)} {\rm arccos}(c(ij)) & (\text{by (\ref{eq:em0})})\\
&={\rm arccos}(c(vu)) & (\text{by the definition of $\tilde{c}$}).
\end{align*}
%
%
%
%
Thus $p(v)\cdot p(s)=c(vu)$ holds, and ${\rm Gram}(p)$ is feasible in ${\rm P}(G, \Sigma, c)$.
\end{proof}

Let ${\cal H}$ be a hypergraph.
We say that $v$ is a {\em cut vertex} in ${\cal H}$ if $H-v$ is disconnected for the underlying vertex-edge incidence bipartite graph $H$ of ${\cal H}$. 
If $v$ is a cut vertex, then $G$ can be decomposed into more than one hyper-subgraphs which intersect each other only at $v$.
Such a hyper-subgraph is called a {\em fraction} at $v$ if it is a member of the finest decomposition (equivalently, $v$ is no longer a cut vertex in the hyper-subgraph).

\begin{lemma}
\label{lem:K32free}
Let $(G,\Sigma)$ be a signed graph which is odd-$K_3^2$ minor free and odd-$K_4$ minor free, and let $c$ be nondegenerate.
If ${\rm arccos}(c)/\pi\in {\rm MET}(G,\Sigma)$, then ${\rm P}(G,\Sigma, c)$ is feasible and 
there is a nice dual solution supported on the strictly tight edges.
\end{lemma}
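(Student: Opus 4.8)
The plan is to induct on $|E(G)|$, first reducing to the case where $(G,\Sigma)$ is $2$-connected, then reducing to the case where every edge is tight, and finally treating that all-tight case by means of the acyclicity lemma (Lemma~\ref{lem:K32acyclic}), the ear-reduction lemma (Lemma~\ref{lem:ear_reduction}), and the gluing lemma (Lemma~\ref{lem:glueing}). For the first reduction, if $(G,\Sigma)$ has a cut vertex $v$, I write $E(G)=E(G_1)\cup E(G_2)$ with $V(G_1)\cap V(G_2)=\{v\}$. Each $(G_i,\Sigma_i)$ is again odd-$K_3^2$ and odd-$K_4$ minor free, the restriction $c_i$ is nondegenerate, and ${\rm arccos}(c_i)/\pi\in{\rm MET}(G_i,\Sigma_i)$, so by induction there are feasible solutions $X_i$ and nice dual solutions $\omega_i$ supported on strictly tight edges. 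Choosing Gram representations and rotating one so that the images of $v$ coincide, I glue the $X_i$ to a feasible solution of ${\rm P}(G,\Sigma,c)$; since $X_1[S,S]=X_2[S,S]$ for $S=\{v\}$, Lemma~\ref{lem:glueing} (with $F=\emptyset$) shows $\Omega_1+\Omega_2$ is nice for ${\rm P}(G,\Sigma,c)$, and its support lies in the union of the two supports, which consists of strictly tight edges of $G$ because whether an odd cycle is tight depends only on $c$ restricted to that cycle. Hence I may assume $(G,\Sigma)$ is $2$-connected.

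Next I would reduce to the all-tight case. If some edge is not tight, I continuously move $c(e)$ for every non-tight $e$ — decreasing if $e\in\Sigma$, increasing otherwise, exactly as in the proofs of Lemmas~\ref{K32} and~\ref{lem:l1embedding} — until it becomes tight, or hits $\sigma(e)$, in which case I contract it (after resigning if needed), apply induction to the smaller graph, and lift back. Let $c'$ be the resulting weight; it keeps ${\rm arccos}(c')/\pi\in{\rm MET}(G,\Sigma)$, and, since a tight odd cycle contains no non-tight edge, every odd cycle tight with respect to $c$ is still tight with respect to $c'$. By the all-tight case below, ${\rm P}(G,\Sigma,c')$ has a rank-$\le 2$ solution $X'$ together with a nice dual $\Omega'$ which, by the way it is built from Lemma~\ref{cycle} and Lemma~\ref{lem:glueing}, is a sum of positive semidefinite dual solutions, one localized to each odd cycle tight with respect to $c'$. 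Because every non-tight weight was moved toward the harder side of its constraint, $X'$ is already feasible for ${\rm P}(G,\Sigma,c)$. I then discard from $\Omega'$ the summands coming from odd cycles that meet a perturbed edge, getting $\Omega$: it is still dual feasible (dropping positive semidefinite summands preserves positive semidefiniteness and the per-edge sign conditions), it is supported on odd cycles tight with respect to both $c$ and $c'$, i.e.\ on the strictly tight edges of $c$, and it is complementary with $X'$.

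For the all-tight, $2$-connected case, let ${\cal C}$ be the family of tight odd cycles. By Lemma~\ref{lem:K32acyclic} the hypergraph ${\cal H}({\cal C})$ is acyclic, and since distinct hyperedges share at most one vertex and $(G,\Sigma)$ is $2$-connected, ${\cal H}({\cal C})$ is a single hyperedge spanning $V(G)$. If $|V(G)|=2$ the claim is trivial; otherwise Lemma~\ref{lem:ear_reduction} supplies a path $P$ of length $\ge 2$ lying in a tight odd cycle such that the path-reduction $(G',\Sigma',c')$ through $P$ again has a spanning single-hyperedge ${\cal H}({\cal C}')$ — after one more application of the $2$-connected reduction if $G'$ acquires a cut vertex. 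I then mimic the proof of Lemma~\ref{cycle}: resign so that $P$ is even; by induction ${\rm P}(G',\Sigma',c')$ has a nice dual $\Omega'$ on its strictly tight edges; the cycle $C_0$ formed by $P$ and a fresh edge $e_{st}$ (odd in $C_0$) is a tight odd cycle of length $\ge 3$, so Lemma~\ref{cycle} gives a nice dual $\tilde\Omega$ for it with every entry nonzero; the two primal solutions agree on $\{s,t\}$ since the value on $e_{st}$ is forced; scaling so that $\Omega'[s,t]+\tilde\Omega[s,t]=0$ (possible because $e_{st}$ has opposite signs in $G'$ and in $C_0$) and applying Lemma~\ref{lem:glueing} with $F=\{e_{st}\}$ yields a nice dual $\Omega'+\tilde\Omega$ of ${\rm P}(G,\Sigma,c)$ supported on strictly tight edges. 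Feasibility of ${\rm P}(G,\Sigma,c)$ here also follows directly from Theorem~\ref{lem:l1embedding}.

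The main obstacle is the middle step: certifying that the pruned dual $\Omega$ obtained for $c$ from the dual $\Omega'$ for the perturbed weight $c'$ is still \emph{nice}, i.e.\ satisfies \emph{strict} complementarity with a \emph{maximum}-rank completion of ${\rm P}(G,\Sigma,c)$, rather than just complementarity with $X'$. I would handle this by perturbing $X'$ slightly off the plane at each perturbed edge, exactly as in the last paragraph of the proof of Lemma~\ref{K32}, so that the rank increases by precisely the drop $\rank\Omega'-\rank\Omega$ caused by discarding the cycle summands; the acyclicity of ${\cal H}({\cal C})$ is what makes those per-cycle summands effectively independent and lets this rank bookkeeping close. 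Everything else in the argument is an assembly of the lemmas already established.
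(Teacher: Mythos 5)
Your overall strategy is the paper's: perturb $c$ to a weight $c'$ under which every edge is tight or degenerate, invoke Theorem~\ref{lem:l1embedding} for a rank-$\le 2$ solution, build the dual out of the per-cycle certificates of Lemma~\ref{cycle}, and use acyclicity of ${\cal H}({\cal C})$ to inflate the primal rank until strict complementarity holds. (The paper is more direct about the dual: it simply sets $\Omega=\sum_{C\in{\cal C}}\Omega_C$ over the tight odd cycles of the \emph{original} $c$, so no pruning of a dual for $c'$ is needed, and complementarity of each $\Omega_C$ with any feasible $X$ is automatic because $X$ restricted to $V(C)$ must be the unique solution of the cycle subproblem.) So the architecture is sound and essentially identical.

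The gap is exactly where you flag "the main obstacle," and your proposed fix does not close it. First, the mechanism is wrong: "perturbing $X'$ slightly off the plane at each perturbed edge" is not a legal move in general, because two vertices $i,j$ joined by a non-tight edge may still lie in a common hyperedge of ${\cal H}({\cal C})$ (connected through tight odd cycles avoiding the edge $ij$); moving $p'(i)$ relative to $p'(j)$ then also moves $p'(i)$ relative to vertices to which it is bound by tight equalities, destroying complementarity with $\Omega$. The admissible degrees of freedom are not indexed by perturbed edges but by the block structure of ${\cal H}({\cal C})$: one rotates entire \emph{fractions} at each cut vertex of ${\cal H}({\cal C})$ (and places distinct connected components in independent subspaces), which fixes all inner products inside each hyperedge and yields $\rank X=|E({\cal H}({\cal C}))|+\omega$, where $\omega$ is the number of connected components. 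Second, the matching count on the dual side, $\corank\Omega=|E({\cal H}({\cal C}))|+\omega$, is not a consequence of "discarding summands"; it is a separate claim proved by applying Lemma~\ref{lem:glueing} inductively along the acyclic structure of ${\cal H}({\cal C})$, and without it the identity $\rank X+\rank\Omega=|V|$ — i.e.\ niceness — is unproven. Your bookkeeping "the rank increases by the drop $\rank\Omega'-\rank\Omega$" is numerically consistent with these two facts but presupposes both of them; as written it is an assertion, not an argument. A smaller but related soft spot: your all-tight induction produces $\Omega'$ by recursive gluing, and the claim that it decomposes as a sum of PSD certificates "one localized to each tight odd cycle of $c'$" (which your pruning step relies on) is not established by that recursion; defining $\Omega$ directly as $\sum_C\Omega_C$ over tight cycles of $c$, as the paper does, avoids the issue entirely.
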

\begin{proof}
Let ${\cal C}$ be the set of tight odd cycles in $(G,\Sigma)$ with respect to $c$, and let $T$ be the set of tight edges. 
By restricting the problem to each tight odd cycle $C$, Lemma~\ref{cycle} gives a dual solution $\Omega_C$ of corank two and supported on $E(C)$.
Let $\Omega=\sum_{C\in {\cal C}} \Omega_C$ by regarding each $\Omega_C$ as a matrix of size $|V(G)|\times |V(G)|$.
Our goal is to find a solution $X$ that satisfies the  strict complementarity condition with $\Omega$.

We shall continuously change the value of $c(e)$ for all non-tight and non-degenerate edges $e$ such that $c(e)$ monotonically increases (resp., decreases) if $e$ is even (resp., odd), until an edge becomes tight or degenerate.
We keep this process until every edge becomes tight or degenerate,
and let $c'$ be the resulting vector. 
Note that ${\rm arccos}(c')/\pi\in {\rm MET}(G,\Sigma)$.
Note also that $\sigma(e)c'(e)>\sigma(e)c(e)$ for every $e\in E(G)\setminus T$.
By Theorem~\ref{lem:l1embedding} there is a solution $X'$ of ${\rm P}(G,\Sigma, c')$ with rank at most two.
(If $G$ is not 2-connected, then one can apply Theorem~\ref{lem:l1embedding} to each 2-connected component and then combine the solutions of the 2-connected components 
to get  a solution of ${\rm P}(G,\Sigma, c')$.)  
This $X'$ is also a solution of ${\rm P}(G,\Sigma,c)$ and satisfies the  strict complementarity condition with $\Omega$.

Take $p':V(G)\rightarrow \mathbb{S}^d$ such that $X'={\rm Gram}(p')$.
Also let $\epsilon$ be a positive number.
By Lemma~\ref{lem:K32acyclic} ${\cal H}({\cal C})$ is acyclic. 
Therefore, by (slightly) rotating points $p'(v)$ of each fraction at each cut vertex of ${\cal H}({\cal C})$,  one can get $p:V(G)\rightarrow \mathbb{S}^{n-1}$ satisfying the following properties.
\begin{description}
\item[(i)] $p_i\cdot p_j=p_i'\cdot p_j'$ 
if $i$ and $j$ belong to a hyperedge of ${\cal H}({\cal C})$.
\item[(ii)] $|p_i\cdot p_j-p_i'\cdot p_j'|\leq \epsilon$ for every $i,j\in V(G)$.
\item[(iii)] For each cut vertex $v$ of ${\cal H}({\cal C})$ and distinct fractions $H$ and $H'$ at $v$, 
$(\spa p(V(H)))\cap (\spa p(V(H')))=\spa p(v)$.
\item[(iv)]  For any distinct connected components $C$ and $C'$ in ${\cal H}({\cal C})$, 
$(\spa p(V(C)))\cap (\spa p(V(C')))=\{0\}$.

\end{description}

Let $X={\rm Gram}(p)$.
(i) implies that $X\Omega=X'\Omega=0$.
Also (i) implies that $p_i\cdot p_j=p_i'\cdot p_j'$ for every $ij\in T$.
On the other hand, since $\sigma(e)c'(e)>\sigma(e)c(e)$ for $e\in E(G)\setminus T$, if $\epsilon$ is sufficiently small, then 
(ii) implies $\sigma(ij)  (p_i\cdot p_j) \geq \sigma(ij) (p_i' \cdot p_j')-\epsilon
\geq \sigma(ij) c'(ij)-\epsilon> \sigma(ij) c(ij)$ for every $ij\in E(G)\setminus T$.
Thus $X$ is a feasible solution for ${\rm P}(X, \Sigma, c)$.

(iii)(iv) imply that $\rank X=|E({\cal H}({\cal C}))|+\omega$, where $\omega$ denotes the number of connected components  in ${\cal H}({\cal C})$.
On the other hand, by applying Lemma~\ref{lem:glueing} inductively (following the acyclic structure of ${\cal H}({\cal C})$), we have that 
the corank of $\Omega$ is equal to $|E({\cal H}({\cal C}))|+\omega$. 
Therefore $(X, \Omega)$ satisfies the  strict complementarity condition.
\end{proof}

\subsection{The remaining case}
\label{subsec:proof}

The remaining for the proof of Theorem~\ref{thm:oddK4} is the case when 
$(G,\Sigma)$ has a cut vertex or a strong 2-split. 

\begin{proof}[Proof of Theorem~\ref{thm:oddK4}]
The proof is done by induction on the number of vertices.  
By Lemma~\ref{K32} and Lemma~\ref{lem:K32free}, we may assume that 
$(G,\Sigma)$ contains an odd-$K_3^2$ minor but is not equal to the odd-$K_3^2$.
By Theorem~\ref{thm:structural} $(G,\Sigma)$ has a cut vertex or a strong 2-split.

The statement easily follows from Lemma~\ref{lem:glueing} if  $(G,\Sigma)$ has a cut vertex.
Hence we may assume that $G$ is 2-connected and  $(G,\Sigma)$ has a strong 2-split.
Let $(G_i,\Sigma_i) \ (i=1,2)$ be the parts of the strong 2-split.
The vertices of $V(G_1)\cap V(G_2)$ are denoted by $u$ and $v$, and for $i=1, 2$ 
the new odd (resp., even) edge between $u$ and $v$ in $(G_i, \Sigma_i)$ is denoted by $f_{i, o}$ (resp., $f_{i,e}$).
Also let $(\tilde{G}_i, \tilde{\Sigma}_i)=(G_i- f_{i,e} -f_{i,o}, \Sigma_i-f_{i,o})$, which is a subgraph of $(G,\Sigma)$. 
Note that $(\tilde{G}_i,\tilde{\Sigma}_i)$ is connected and is not bipartite (by the definition of strong 2-split). 
Hence the 2-connectivity of  $G$ implies that $(\tilde{G}_i,\tilde{\Sigma}_i)$ has both  an even path and an odd path, respectively, between $u$ and $v$.

Let $x={\rm arccos}(c)/\pi \in {\rm MET}(G,\Sigma)$.
By $c\in (-1,1]^{E\cap \Sigma}\times [-1,1)^{E\setminus \Sigma}$, we have 
\begin{equation}
\label{eq:x_assumption}
x\in [0,1)^{E\cap \Sigma}\times (0,1]^{E\setminus \Sigma}.
\end{equation}
For each path $P$, we define $\lambda(P)\in \mathbb{R}$ by
\[
\lambda(P)=\begin{cases}
{\rm val}(P,x) & \text{ if $P$ is even} \\
1-{\rm val}(P,x) & \text{ if $P$ is odd}
\end{cases}
\] 
and for $i=1,2$ let 
\begin{equation}
\label{eq:lambda}
\begin{split}
\lambda_{i,e}&=\min\{\lambda(P): P \text{ is an even path between $u$ and $v$ in $(\tilde{G}_i, \tilde{\Sigma}_i)$}\} \\
\lambda_{i,o}&=\max\{\lambda(P): P \text{ is an odd path between $u$ and $v$ in $(\tilde{G}_i, \tilde{\Sigma}_i)$}\}.
\end{split}
\end{equation}
Since $x\in {\rm MET}(G,\Sigma)$, we have 
\begin{equation}
\label{eq:lambda1}
\lambda_{i,o}\leq \lambda_{j,e} \qquad (1\leq i, j\leq 2).
\end{equation}
Also by (\ref{eq:x_assumption}) we have
\begin{equation}
\label{eq:lambda2}
\lambda_{i,e}>0 \quad \text{and} \quad \lambda_{i,o}<1.
\end{equation}
These inequalities imply that
\[
I:=\{\lambda\in (0,1) : \lambda_{i,o}\leq \lambda \leq \lambda_{i,e} \ (i=1,2)\}
\]
is nonempty.
We take any number $\lambda^*$ from the relative interior of $I$.

Define $c_i\in [-1,1]^{E(G_i)}$ such that  $c_i(e)=c(e)$ for $e\in E(\tilde{G}_i)$ and 
$c_i(f_{i,e})=c_i(f_{i,o})=\cos(\pi \lambda^*)$.

\begin{claim}
\label{claim:main_cycle}
The following hold.
\begin{itemize}
\setlength{\parskip}{0.1cm} 
 \setlength{\itemsep}{0.1cm} 
\item For each $i$, ${\rm arccos}(c_i)/\pi\in {\rm MET}(G, \Sigma)$.
\item $\lambda^*=\lambda_{i,o}$ if and only if $(G_i,\Sigma_i)$ has a tight odd cycle $C_i$  that passes through $f_{i,e}$ and not through $f_{i,o}$.
\item $\lambda^*=\lambda_{i,e}$ if and only if $(G_i,\Sigma_i)$ has a tight odd cycle $C_i$ that passes through $f_{i,o}$ and not through $f_{i,e}$.
\item Suppose that  $(G_i, \Sigma_i)$ has a tight odd cycle $C_i$  passing through $f_{i,e}$ and
$(G_j, \Sigma_j)$ has  a tight odd cycle $C_j$   passing through $f_{j,o}$.
Then $(C_i\setminus \{f_{i,e}\})\cup (C_j\setminus \{f_{j,o}\})$ forms a tight odd cycle. 
\end{itemize}
\end{claim}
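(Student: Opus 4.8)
The plan is to reduce the whole claim to a case analysis over the odd cycles of $(G_i,\Sigma_i)$. First I would record that, writing $x={\rm arccos}(c)/\pi$ and $x_i={\rm arccos}(c_i)/\pi$, since ${\rm arccos}$ is a bijection onto $[0,\pi]$ and $\lambda^*\in(0,1)$ we have $x_i(e)=x(e)$ for every $e\in E(\tilde G_i)$ and $x_i(f_{i,e})=x_i(f_{i,o})=\lambda^*$. Next I would classify the odd cycles of $(G_i,\Sigma_i)$: each is of exactly one of three types --- (a) an odd cycle lying entirely in $(\tilde G_i,\tilde\Sigma_i)$; (b) the union of $f_{i,e}$ with an \emph{odd} $u$--$v$ path $P$ in $(\tilde G_i,\tilde\Sigma_i)$, or the union of $f_{i,o}$ with an \emph{even} $u$--$v$ path $P$; (c) the digon $\{f_{i,e},f_{i,o}\}$ --- noting that no odd cycle uses both parallel edges except the digon (a cycle through both already has $u$ and $v$ of degree two). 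The one computation I would set up once and reuse is that, by additivity of ${\rm val}(\cdot,x)$ over edge-disjoint unions together with ${\rm val}(f_{i,e},x_i)=\lambda^*$ and ${\rm val}(f_{i,o},x_i)=1-\lambda^*$, a type-(b) cycle $C=f_{i,e}\cup P$ (with $P$ odd, so $\lambda(P)=1-{\rm val}(P,x)$) has ${\rm val}(C,x_i)=1-\lambda(P)+\lambda^*$, while a type-(b) cycle $C=f_{i,o}\cup P$ (with $P$ even, so $\lambda(P)={\rm val}(P,x)$) has ${\rm val}(C,x_i)=\lambda(P)+1-\lambda^*$; this is exactly what the definition of $\lambda(P)$ is designed to produce.

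With this in hand, the first statement (which I read as ${\rm arccos}(c_i)/\pi\in{\rm MET}(G_i,\Sigma_i)$) amounts to checking ${\rm val}(C,x_i)\geq 1$ for the three types: type (a) gives ${\rm val}(C,x_i)={\rm val}(C,x)\geq 1$ because $C$ is an odd cycle of $(G,\Sigma)$ and $x\in{\rm MET}(G,\Sigma)$; the digon gives ${\rm val}=\lambda^*+(1-\lambda^*)=1$; and for type (b) I would use $\lambda(P)\geq\lambda_{i,e}\geq\lambda^*$ for even $P$ and $\lambda(P)\leq\lambda_{i,o}\leq\lambda^*$ for odd $P$ (the outer inequalities being exactly $\lambda^*\in I$), which makes the two displayed formulas at least $1$. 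For the second statement, a tight odd cycle through $f_{i,e}$ but not $f_{i,o}$ must be of the form $f_{i,e}\cup P$ with $P$ an odd $u$--$v$ path, and tightness reads $1-\lambda(P)+\lambda^*=1$, i.e.\ $\lambda(P)=\lambda^*$; since always $\lambda(P)\leq\lambda_{i,o}\leq\lambda^*$, such a $P$ exists precisely when $\lambda_{i,o}=\lambda^*$, in which case any odd $u$--$v$ path attaining the maximum in (\ref{eq:lambda}) works. The third statement is the mirror image, with even paths and $\lambda(P)\geq\lambda_{i,e}\geq\lambda^*$.

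For the fourth statement I would first treat the case $i\neq j$, which I expect is the one needed later. Then $V(\tilde G_i)\cap V(\tilde G_j)=\{u,v\}$, so writing $C_i=f_{i,e}\cup P$ with $P$ an odd $u$--$v$ path in $(\tilde G_i,\tilde\Sigma_i)$ and $C_j=f_{j,o}\cup Q$ with $Q$ an even $u$--$v$ path in $(\tilde G_j,\tilde\Sigma_j)$, the paths $P$ and $Q$ are internally disjoint, so $(C_i\setminus\{f_{i,e}\})\cup(C_j\setminus\{f_{j,o}\})=P\cup Q$ is a genuine cycle of $(G,\Sigma)$, odd because $P$ is odd and $Q$ is even. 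Tightness of $C_i$ and of $C_j$ give ${\rm val}(P,x)=1-\lambda^*$ and ${\rm val}(Q,x)=\lambda^*$ respectively, whence ${\rm val}(P\cup Q,x)={\rm val}(P,x)+{\rm val}(Q,x)=1$, so $P\cup Q$ is tight. The case $i=j$, which by the second and third statements forces $\lambda_{i,o}=\lambda^*=\lambda_{i,e}$, I would handle by decomposing $E(P)\cup E(Q)$ into cycles and running the averaging argument from the proof of Lemma~\ref{lem:two_cycles}, using nondegeneracy (\ref{eq:x_assumption}) to exclude doubled parallel edges from the decomposition.

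I expect the only genuine difficulty to be the sign bookkeeping --- correctly combining, for each type-(b) cycle, the parity of the internal path, the parity of the parallel edge it is attached to, the raw value ${\rm val}(\cdot,x)$, and the packaged quantity $\lambda(\cdot)$ --- and making sure the three-type classification of odd cycles of $(G_i,\Sigma_i)$ really is exhaustive. Once the identities ${\rm val}(C,x_i)=1-\lambda(P)+\lambda^*$ and ${\rm val}(C,x_i)=\lambda(P)+1-\lambda^*$ are established, every assertion drops out from $\lambda^*\in I$ and the extremal property defining $\lambda_{i,e}$ and $\lambda_{i,o}$.
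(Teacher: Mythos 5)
Your proposal is correct and follows essentially the same route as the paper: both reduce each bullet to the identity that a tight odd cycle through exactly one of $f_{i,e},f_{i,o}$ corresponds to a $u$--$v$ path $P$ in $(\tilde G_i,\tilde\Sigma_i)$ with $\lambda(P)=\lambda^*$, and both handle the fourth bullet by additivity of ${\rm val}$ together with an Eulerian cycle decomposition and the nondegeneracy bound (\ref{eq:x_assumption}) when $i=j$. Your explicit three-type classification of the odd cycles of $(G_i,\Sigma_i)$ is merely a more systematic write-up of the paper's terser argument, not a different method.
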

\begin{proof}
The first claim follows from $\lambda_{i,o}\leq \lambda^*\leq \lambda_{i,e}$ and the definition of $c_i(f_{i,e})$ and $c_i(f_{i,o})$.

To see the second claim, suppose that $\lambda^*=\lambda_{i,o}$.
Then $(\tilde{G}_i, \tilde{\Sigma}_i)$ has an odd path $P$ between $u$ and $v$ 
with $x(f_{i,e})=1-{\rm val}(P,x)$,
or equivalently ${\rm val}(P\cup \{f_{i,e}\},x)=1$.
Hence $P\cup \{f_{i,e}\}$ is a tight odd cycle in $(G_i, \Sigma_i)$.
Reversing the argument, we also see  the converse direction of the second claim.

The third claim follows from the same argument as the second one.

The fourth claim can be seen as follows.
Let $C=(C_i\setminus \{f_{i,e}\})\cup (C_j\setminus \{f_{j,o}\})$.
By the first and the second claims, we have $\lambda_{i,e}=\lambda^*=\lambda_{j,o}$.
Therefore, since $c_i(f_{i,e})=\cos (\pi \lambda^*)=c_j(f_{j,e})$, we have
\begin{equation}
\label{eq:main_cycle2}
{\rm val}(E(C_i)\setminus \{f_{i,e}\}, x)+{\rm val}(E(C_j)\setminus \{f_{j,o}\},x)=1.
\end{equation}
Thus, what remains is to verify that $C$ forms an odd cycle.
If $i\neq j$, then $C$ is clearly an odd cycle.
Suppose $i=j$.
Since $C$ is Eulerian (by replacing each edge in $(E(C_i)\setminus \{f_{i,e}\})\cap (E(C_j)\setminus \{f_{j,o}\})$ by parallel edges), it can be decomposed into edge-disjoint cycles $E_1,\dots, E_\ell$.
By  (\ref{eq:x_assumption}),
 ${\rm val}(E_j, x)>0$ if $E_j$ is even,
and ${\rm val}(E_j, x)\geq 1$ if $E_j$ is odd for each $1\leq j\leq \ell$.
Since there is at least one odd cycle, by (\ref{eq:main_cycle2}) we get
$1={\rm val}(E(C_i)\setminus \{f_{i,e}\}, x)+{\rm val}(E(C_j)\setminus \{f_{j,o}\},x)
=\sum_{1\leq j\leq \ell}{\rm val}(E_j,x)\geq 1$,
meaning that $C$ consists of just one odd cycle, which is odd. 
\end{proof}

By Claim~\ref{claim:main_cycle}  $c_i\in {\rm MET}(G, \Sigma)$, and $c_i$ is nondegenerate by (\ref{eq:x_assumption}). Hence we can apply the induction hypothesis to get  a nice dual solution $\omega_i$ for each $(\tilde{G}_i, \tilde{\Sigma}_i, c_i)$ supported on the strictly tight edges.
We take such $\omega_i$ so that it satisfies an extra property as follows.
\begin{claim}
\label{claim:main_cycle3}
There is a nice dual solution $\omega_i\ (i=1,2)$ supported on the strictly tight edges such that 
\begin{equation}
\label{eq:main_cycle3}
\omega_1(f_{1,e})+\omega_1(f_{1,o})+\omega_2(f_{2,e})+\omega_2(f_{2,o})=0.
\end{equation}
\end{claim}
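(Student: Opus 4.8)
The plan is to start from the two nice dual solutions $\omega_1, \omega_2$ obtained from the induction hypothesis applied to $(\tilde{G}_1,\tilde{\Sigma}_1,c_1)$ and $(\tilde{G}_2,\tilde{\Sigma}_2,c_2)$, and to modify each of them by adding a carefully scaled dual certificate coming from a suitable tight odd cycle inside the corresponding part, so as to kill the residual sum $\sum_i(\omega_i(f_{i,e})+\omega_i(f_{i,o}))$. The first thing I would do is observe that in $(G_i,\Sigma_i,c_i)$ we have $c_i(f_{i,e})=c_i(f_{i,o})=\cos(\pi\lambda^*)$, so the two parallel edges $f_{i,e}$ (even) and $f_{i,o}$ (odd) between $u$ and $v$ play symmetric but oppositely-signed roles: if $\lambda^*$ lies in the relative interior of $I$, then neither $f_{i,e}$ nor $f_{i,o}$ is strictly tight (by the second and third bullets of Claim~\ref{claim:main_cycle}), so a nice dual solution supported on strictly tight edges has $\omega_i(f_{i,e})=\omega_i(f_{i,o})=0$ for both $i$, and \eqref{eq:main_cycle3} holds trivially. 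Hence the only interesting situation is when $\lambda^*$ is an endpoint of $I$, i.e.\ $\lambda^*=\lambda_{i,o}$ or $\lambda^*=\lambda_{i,e}$ for at least one $i$; by \eqref{eq:lambda1} and the nonemptiness of $I$ this forces a matching pattern on the two parts.

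The core of the argument is then a local adjustment. Suppose $\lambda^*=\lambda_{i,e}$, so by Claim~\ref{claim:main_cycle} $(G_i,\Sigma_i)$ has a tight odd cycle $C_i$ through $f_{i,o}$ but not $f_{i,e}$; apply Lemma~\ref{cycle} to $C_i$ (with the edge weights restricted to $C_i$, which are nondegenerate and make $C_i$ tight) to get a dual certificate $\Omega_{C_i}$ of corank two supported on $E(C_i)$ with every entry nonzero, in particular $\omega_{C_i}(f_{i,o})\neq 0$; since $f_{i,o}$ is odd we may scale $\Omega_{C_i}$ by a nonnegative multiple so that $\omega_i(f_{i,e})+\omega_i(f_{i,o})$ is moved to any prescribed value that keeps the coefficient of $f_{i,o}$ nonnegative (and the coefficient of $f_{i,e}$ is untouched). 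Symmetrically, if $\lambda^*=\lambda_{i,o}$ we use a tight odd cycle through $f_{i,e}$ to adjust via $f_{i,e}$, whose coefficient must stay nonpositive. Matching the endpoint-conditions across $i=1,2$ via \eqref{eq:lambda1}: if $\lambda^*=\lambda_{1,e}$ then $\lambda^*\le\lambda_{1,o}$ is impossible unless $I$ is a point, so one checks that whenever one part can only decrease the relevant coefficient the other part can only increase it, and conversely, so the two one-sided adjustments can be combined to hit total $0$. I would then argue that adding $\Omega_{C_i}$ to $\Omega_i$ preserves niceness: the new solution is still dual feasible (the added certificate has the correct sign pattern on $E(C_i)$ since $C_i$ is an odd cycle), is still supported on strictly tight edges of $(\tilde G_i,\tilde\Sigma_i)$ (the edges of $C_i$ inside $\tilde G_i$ lie on the tight odd cycle $C_i$ of length $\ge 3$), and still satisfies strict complementarity because $\ker(\Omega_i+\Omega_{C_i})\subseteq\ker\Omega_i\cap\ker\Omega_{C_i}$ forces the rank of the common maximum-rank primal solution to rise by exactly the drop in corank — here one uses that the primal solution constructed in the glueing (Lemma~\ref{lem:glueing}) already has the Gram vectors of $C_i$ lying on a $2$-plane, so the corank bookkeeping is exact; alternatively one reruns the Lemma~\ref{lem:glueing} argument with the enlarged dual.

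The main obstacle I anticipate is the sign/feasibility bookkeeping of the scaling: the dual constraints require $\omega(e)\le 0$ on even edges and $\omega(e)\ge 0$ on odd edges, so the certificate $\Omega_{C_i}$ from Lemma~\ref{cycle} has a fixed sign pattern (determined by $C_i$ being odd), and I can only scale it by a \emph{nonnegative} factor without breaking feasibility — hence I must verify that the sign of the required correction to $\omega_i(f_{i,e})+\omega_i(f_{i,o})$ is compatible with the sign that $\Omega_{C_i}$ contributes through $f_{i,o}$ (resp.\ $f_{i,e}$), and that the two parts' admissible correction-directions are opposite so their sum can reach zero. This is exactly where \eqref{eq:lambda1}, \eqref{eq:lambda2}, and the choice of $\lambda^*$ in the relative interior of $I$ (which guarantees that if an endpoint of $I$ is attained by part $i$ then the \emph{other} endpoint-type is attained, if at all, only by the opposite part) get used; I expect the proof to reduce to a short case analysis over which of $\lambda_{1,e},\lambda_{1,o},\lambda_{2,e},\lambda_{2,o}$ coincide with $\lambda^*$, with the generic interior case being vacuous and the two boundary cases handled symmetrically.
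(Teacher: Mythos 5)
Your proposal follows essentially the same route as the paper: in the generic case where $\lambda^*$ is strictly between all the $\lambda_{i,o}$ and $\lambda_{j,e}$, neither parallel edge is strictly tight and both coefficients vanish automatically; otherwise you use the if-and-only-if bullets of Claim~\ref{claim:main_cycle} to produce tight odd cycles through $f_{i,e}$ or $f_{i,o}$ and add nonnegative multiples of their dual certificates (Lemma~\ref{cycle}) to steer the sum to zero, checking that this preserves feasibility, support on strictly tight edges, and strict complementarity. This is the paper's argument in substance, with one cosmetic difference: the paper also uses the fact that $f_{i,e}$ and $f_{i,o}$ index the same matrix entry $E_{uv}$ to transfer weight between them without changing $\Omega_i$, whereas you rely only on the additive cycle adjustments; both work.

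One assertion in your sketch is false, however: you claim that the choice of $\lambda^*$ in the relative interior of $I$ guarantees that if one endpoint-type is attained by part $i$ then the other endpoint-type is attained ``only by the opposite part.'' When $\lambda_{i,o}=\lambda_{i,e}$ for some $i$, the interval $I$ degenerates to a single point, its relative interior is that point, and the \emph{same} part attains both endpoint-types --- this is exactly the paper's Case~3, which it treats separately. Your case analysis as stated (``the two boundary cases handled symmetrically'') would skip this configuration. Fortunately your mechanism is direction-based rather than case-based: in that configuration part $i$ has a tight odd cycle through $f_{i,e}$ \emph{and} one through $f_{i,o}$, so both adjustment directions are available within part $i$ alone and the sum can still be driven to zero. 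The correct statement of what the relative-interior choice buys you is weaker but sufficient: whenever some $f_{i,e}$ is strictly tight (a ``decrease-only'' handle), some $f_{j,o}$ is also strictly tight (an ``increase'' handle), possibly with $j=i$. With that correction the proof goes through.
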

\begin{proof}
The proof is split into three cases.

(Case 1) If $\lambda_{i,e}<\lambda_{j,o}$ for any $i, j\in \{1,2\}$, then 
Claim~\ref{claim:main_cycle} implies that 
$(G_i, \Sigma_i)$ has no tight cycle $C_i$  passing through $f_{i,e}$ nor $f_{i,o}$.
Hence $\omega_i(f_{i,e})=\omega_i(f_{i,o})=0$ and (\ref{eq:main_cycle3}) follows.

(Case 2)
If $\lambda_{i,e}=\lambda_{j,o}$ for  $i\neq j$, then 
by Claim~\ref{claim:main_cycle}
$(G_i, \Sigma_i)$ has a tight cycle $C_i$  passing through $f_{i,e}$
and $(G_j, \Sigma_j)$ has  a tight cycle $C_j$   passing through $f_{j,o}$.
We can suppose that $\omega_i(f_{i,e})<0$ $\omega_i(f_{i,o})=0$.
(To see this, suppose $\omega_1(f_{i,o})\neq 0$. 
We can assume $|\omega_i(f_{i,e})|>|\omega_i(f_{i,o})|$, since otherwise we can increases $|\omega_i(f_{i,e})|$  by adding a dual feasible solution supported on $E(C_i)$. 
Setting $\omega_i(f_{i,e})\leftarrow \omega_i(f_{i,e})+\omega_i(f_{i,o})$ and 
$\omega_i(f_{i,o})\leftarrow 0$ we have a nice dual solution such that $\omega_i(f_{i,o})=0$.)
Similarly we can suppose that $\omega_j(f_{j,o})<0$ and $\omega_j(f_{j,e})=0$.
By multiplying $\omega_2$ by a positive number, we may further suppose 
$\omega_i(f_{i,e})+\omega_j(f_{j,o})=0$. Then the resulting vectors satisfy (\ref{eq:main_cycle3}).

(Case 3) Suppose $\lambda_{1,e}=\lambda_{1,o}$ or $\lambda_{2,e}=\lambda_{2,o}$.
Without loss of generality assume $\lambda_{1,e}=\lambda_{1,o}$.
Then $(G_1, \Sigma_1)$ has a tight cycle $C_e$  passing through $f_{1,e}$
and a tight cycle $C_o$  passing through $f_{1,o}$.
We may suppose that $\omega_1(f_{1,o})=\omega_1(f_{1,e})=0$
(otherwise, we first change $\omega_1$ so that 
$|\omega_1(f_{1,o})|=|\omega_1(f_{1,e})|$ 
by adding dual feasible solutions supported on $E(C_o)$ or $E(C_e)$ 
and apply  the canceling of $\omega_1(f_{1,o})$ and $\omega_1(f_{1,e})$
so that  $\omega_1(f_{1,o})=\omega_1(f_{1,e})=0$).
Similarly we can suppose $\omega_2(f_{2,o})=\omega_2(f_{2,e})=0$ if $\lambda_{2,o}=\lambda_{2,e}$.
If  $\lambda_{2,o}\neq \lambda_{2,e}$, then by Case 2 we can assume
$\lambda_{2,o}\neq \lambda_{1,e}$ and $\lambda_{2,e}\neq \lambda_{1,o}$,
and hence $\lambda_{2,o}<\lambda^*< \lambda_{2,e}$. This implies that neither $f_{2,o}$ nor $f_{2,e}$ is strictly tight, and we again have $\omega_2(f_{2,o})=\omega_2(f_{2,e})=0$. This confirms (\ref{eq:main_cycle3}).
\end{proof}

(\ref{eq:main_cycle3}) implies that there is a dual solution $\omega$ of ${\rm P}(G,\Sigma, c)$ 
such that $\Omega=\Omega_1+\Omega_2$. 
Lemma~\ref{lem:glueing} implies that $\Omega_1+\Omega_2$ satisfies the  strict complementarity condition with any maximum rank solution of ${\rm P}(G,\Sigma, c)$.
(Note that, for any solutions $X_i$ in ${\rm P}(G_i, \Sigma_i, c_i)$, we have $X_1[u,v]=\lambda^*=X_2[u,v]$.)
 
It remains to show that the resulting vector $\omega$ is supported on the strictly tight edges.
Take any $e\in E(G)$ with $\omega(e)\neq 0$, and without loss of generality assume $e\in E(\tilde{G}_1)$.
Since $\omega(e)\neq 0$, $e$ is contained in a tight cycle $C$ of length at least three in $(G_1, \Sigma_1)$.
If $C$ contains neither $f_{1,e}$ nor $f_{1,o}$, then $C$ remains a tight cycle in $(G,\Sigma)$,
and hence $e$ is strictly tight in $(G,\Sigma)$.
If $C$ contains, say $f_{1,e}$, then by Claim~\ref{claim:main_cycle} 
$\lambda^*=\lambda_{1,o}$.
Since $\lambda^*$ is taken to be in the relative interior of $I$, 
we must have $\lambda_{i,o}=\lambda_{1,e}$ for some $i\in \{1,2\}$ by Claim~\ref{claim:main_cycle}.
Therefore, by Claim~\ref{claim:main_cycle} again, 
$(G_i, \Sigma_i)$ contains a tight odd cycle $C_i$ passing through $f_{i,o}$ 
and moreover $(C\setminus \{f_{1,e}\})\cup (C_i\setminus \{f_{i,o}\})$ is a tight odd cycle in $(G,\Sigma)$.
Thus $e$ is strictly tight in $(G,\Sigma)$. This completes the proof of Theorem~\ref{thm:oddK4}.
\end{proof}

\subsection{Computational aspect}
\label{subsec:algorithms}
We give a  brief remark on the computational aspect.
Although the metric polytope consists of exponential number of inequalities, Barahona and Mahjoub~\cite{bm} showed that one can decide whether $x\in \mathbb{R}^{E}$ belongs to ${\rm MET}(G)$  in polynomial time by computing the shortest distances of $O(|V|)$ pairs in an auxiliary weighted graph.
(See, e.g., \cite[Section~27.3.1]{dl}.)
The technique can be trivially adapted to signed graphs. 
Thus, provided that ${\rm arccos}(c)/\pi$ is given as input, one can check wether ${\rm P}(G, \Sigma, c)$ is feasible or not by Theorem~\ref{thm:oddK4projection}.

Our proof of Theorem~\ref{sec:proof} is constructive and computes a maximum rank completion in polynomial time (in the real RAM model). 
We sketch the algorithm:
\begin{itemize}
\item By computing a shortest path in the above auxiliary graph, 
one can compute 
\begin{align*}
\lambda_{u,v,e}&:=\min\{{\rm val}(P, {\rm arccos}(c)/\pi): P \text{ is an even path between $u$ and $v$}\} \\
\lambda_{u,v,o}&:=\min\{{\rm val}(P, {\rm arccos}(c)/\pi): P \text{ is an odd path between $u$ and $v$}\}
\end{align*}
  for any pair of vertices $u, v$ in polynomial time. 
Hence, if $(G,\Sigma)$ has a strong 2-split, then a solution of ${\rm P}(G, \Sigma, c)$ can be constructed from a solution of each part of the 2-split as shown in Section~\ref{subsec:proof}.

\item  From $\lambda_{u,v,e}$ and $\lambda_{u,v,o}$,  one can decide whether there is a tight odd cycle that contains $u$ and $v$ if $c$ is nondegenerate (cf.~the fourth claim of Claim~\ref{claim:main_cycle}), and hence one can compute ${\cal H}({\cal C})$ for the family of tight odd cycles in polynomial time. 
Also one can decide whether an edge is tight or not in polynomial time.

\item By Theorem~\ref{thm:structural}, if $(G,\Sigma)$ has neither a cut vertex nor a strong 2-split, then it is equivalent to the odd-$K_3^2$ or odd-$K_3^2$ minor free.

\item If $(G, \Sigma)$ is equivalent to the odd-$K_3^2$ or it is odd-$K_3^2$ minor free, then we use  the construction of a completion given the proof of Lemma~\ref{K32} or that of Lemma~\ref{lem:K32free}, respectively. We remark that, denoting by $\bar{T}$ the set of edges which are neither tight nor degenerate,   the shortest path technique can also computes $\max\{\varepsilon\in \mathbb{R}: x+\sum_{e\in \bar{T}} \sigma(e) (\varepsilon \chi_e)\in {\rm MET}(G, \Sigma)\}$, where $\chi_e$ denotes the characteristic vector of $e$ in $\mathbb{R}^E$.
Thus the construction given in the proof of Lemma~\ref{K32} or that of Lemma~\ref{lem:K32free} can be implemented.
\end{itemize}

The proof of Lemma~\ref{lem:K32free} also implies that, if ${\rm P}(G,\Sigma,c)$ is feasible and $(G,\Sigma)$ is odd-$K_4$ minor and odd-$K_3^2$ minor free, then there always exists a solution of rank at most two. This solution can be computed in polynomial time in the real RAM model.
It is known that the problem of deciding whether a PSD matrix completion problem has a solution of rank at most two is NP-hard, even if the underlying graph is a cycle $C_n$~\cite{nlv}.
This means that, in the signed setting, the problem is NP-hard even if the underlying signed graph is restricted to the odd-$C_n^2$.

Theorem~\ref{thm:structural} implies that any odd-$K_4$ minor free signed graph can be decomposed into copies of the odd-$K_3^2$ and odd-$K_3^2$-minor free signed graphs by strong 2-splits.
Therefore,  if ${\rm P}(G,\Sigma,c)$ is feasible and $(G,\Sigma)$ is odd-$K_4$ minor, then there always exists a solution of rank at most three and a solution can be computed in polynomial time.

\section{Unique Completability}
\label{sec:unique}
In this section we show that the proof  of Theorem~\ref{thm:oddK4} can be adapted to characterizing the unique solvability of completion problems.

Given a signed graph $(G, \Sigma)$ and a set ${\cal C}$ of odd cycles in $(G,\Sigma)$, 
recall that the hypergraph ${\cal H}({\cal C})$ on $V(G)$ is defined as one obtained by regarding each cycle in ${\cal C}$ as a hyperedge and repeatedly merging a pair of hyperedges $e, e'$ with $|e\cap e'|\geq 2$ into a single hyperedge.
If  $c$ is nondegenerate, our characterization  will be given in term of hypergraph ${\cal H}({{\cal C}_c})$, where ${\cal C}_c$ is the set of tight odd cycles with respect to $c$.
If $(G,\Sigma, p)$ is degenerate, we can reduce the problem to the nondegenerate case by using contraction and resigning.  
Specifically, we first compute $(G', \Sigma', c')$ from $(G,\Sigma, c)$ by a sequence of resigning and contraction of an even edge $ij$ with $c(ij)=1$ such that $c'$ is nondegenerate,
and define a hypergraph ${\cal H}(G,\Sigma, c)$ to be ${\cal H}({\cal C}_{ c' })$.
Note that ${\cal H}(G,\Sigma, c)$ is uniquely defined by $(G,\Sigma, c)$.

We say that a hypergraph forms a {\em triangle} if 
there is no isolated vertex and 
it consists of three edges $e_1, e_2, e_3$
such that each pair of them intersects at exactly one vertex.
Lemma~\ref{lem:l1embedding} imply the following.
\begin{lemma}
\label{lem:unique}
Let $(G,\Sigma)$ be a signed graph and let $c\in [-1,1]^E$ with $c\in {\cal E}(G,\Sigma)$. Then the following holds:
\begin{itemize}
\setlength{\parskip}{0.1cm} 
 \setlength{\itemsep}{0.1cm} 
\item If ${\cal H}(G,\Sigma, c)$ has only one vertex, 
then ${\rm P}(G,\Sigma, c)$ is uniquely solvable and its solution has rank one.
\item If ${\cal H}(G,\Sigma, c)$ has a hyperedge spanning all the vertices, 
then ${\rm P}(G,\Sigma, c)$ is uniquely solvable and its solution has rank at most two.
\item If ${\cal H}(G,\Sigma, c)$ forms a triangle, 
then ${\rm P}(G,\Sigma, c)$ is uniquely solvable and its solution has rank at most three.
\end{itemize} 
\end{lemma}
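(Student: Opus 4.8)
The plan is to reduce each of the three cases to repeated applications of Theorem~\ref{lem:l1embedding} (and its uniqueness clause), glued together along the acyclic structure of the hypergraph ${\cal H}(G,\Sigma,c)$. First I would dispose of the degenerate case: by the contraction-resigning reduction described in the paragraph preceding the lemma, it suffices to prove the statement assuming $c$ is nondegenerate, so that ${\cal H}(G,\Sigma,c)={\cal H}({\cal C}_c)$ for ${\cal C}_c$ the family of tight odd cycles. Note that if ${\cal H}(G,\Sigma,c)$ has only one vertex this means (after the reduction) that $G$ has a single vertex, and the unique rank-one solution is $[1]$; this is the base case. For the other two cases the overall strategy is the same: every hyperedge of ${\cal H}({\cal C}_c)$ is, by construction, a union of tight odd cycles pairwise overlapping in $\ge 2$ vertices, so the induced subgraph $(G_e,\Sigma_e)$ on a hyperedge $e$ is $2$-connected, odd-$K_3^2$ minor free (since a hyperedge carries no odd-$K_3^2$ minor, else Lemma~\ref{lem:K32acyclic} would be violated inside it), odd-$K_4$ minor free, and has every one of its edges tight. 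Hence Theorem~\ref{lem:l1embedding} applies to each $(G_e,\Sigma_e, c|_{E(G_e)})$ and gives a \emph{unique} solution of rank $\le 2$.

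The second bullet is then immediate: if ${\cal H}({\cal C}_c)$ has a single hyperedge spanning all of $V(G)$, then $G$ itself (after discarding non-tight edges, which are implied) equals this hyperedge's induced subgraph, Theorem~\ref{lem:l1embedding} gives the unique rank-$\le 2$ completion of the tight part, and one must only check that the non-tight edges do not cut down the solution set. Here I would invoke that for a nondegenerate non-tight edge $ij$ the metric-polytope inequalities are strict, so ${\rm arccos}(c(ij))/\pi$ lies strictly inside the interval forced by the tight structure; since the rank-$\le 2$ solution is forced on the tight edges and the single spanning hyperedge already determines all Gram inner products $p_i\cdot p_j$, the constraint $\sigma(ij)(p_i\cdot p_j)\ge \sigma(ij)c(ij)$ holds strictly and imposes nothing new. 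Uniqueness is inherited from Theorem~\ref{lem:l1embedding}.

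For the third bullet, suppose ${\cal H}({\cal C}_c)$ forms a triangle on hyperedges $e_1,e_2,e_3$ with $e_i\cap e_{i+1}=\{v_i\}$ and no isolated vertices. By the argument above each induced piece $(G_{e_i},\Sigma_{e_i})$ has a unique rank-$\le 2$ completion ${\rm Gram}(p^{(i)})$, and on the single shared vertex of two consecutive pieces the $1\times 1$ block forces agreement automatically. I would glue these three embeddings: place $p^{(1)}$ in a plane, then rotate $p^{(2)}$ about the line through $p(v_1)$ so that its plane is generically transverse to the first, and rotate $p^{(3)}$ about $p(v_2)$ into a third generic direction while keeping $p(v_3)$'s already-fixed image consistent (the cycle closes up because $v_3\in e_3\cap e_1$ and $p(v_3)$ was determined by piece $1$; a dimension count shows the generic rotation of piece $3$ can be chosen to match). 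The result is a map $p:V(G)\to\mathbb{S}^{d-1}$ with $d\le 3$ whose Gram matrix is feasible — non-tight edges again impose nothing by the strict-inequality observation — and applying Lemma~\ref{lem:glueing} twice (once for each shared vertex, regarded as a cut vertex of the incidence bipartite graph) shows the assembled dual solution $\Omega_{e_1}+\Omega_{e_2}+\Omega_{e_3}$ satisfies strict complementarity with this $X$; since $\Omega$ has corank exactly $\rank X$, and since each piece's solution was unique, the glued solution is the unique one. I expect the main obstacle to be exactly the gluing step in the triangle case: one must verify that the three rotations can be chosen \emph{simultaneously} consistently (the closing-up condition at $v_3$), and that no non-tight edge running between two different hyperedges becomes violated after the generic perturbation — both handled by choosing the rotation parameters in a sufficiently small generic neighbourhood, but the bookkeeping for why uniqueness survives the perturbation is the delicate part.
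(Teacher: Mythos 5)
There is a genuine gap at the foundation of your argument: you apply Theorem~\ref{lem:l1embedding} to the subgraph induced by a hyperedge, but that theorem requires the graph to be odd-$K_4$ minor free and odd-$K_3^2$ minor free, and Lemma~\ref{lem:unique} carries \emph{no} minor-freeness hypothesis. Your justification --- that a hyperedge must be odd-$K_3^2$ minor free ``else Lemma~\ref{lem:K32acyclic} would be violated inside it'' --- runs that implication backwards: Lemma~\ref{lem:K32acyclic} \emph{assumes} minor-freeness and \emph{concludes} acyclicity of ${\cal H}({\cal C})$; it gives no way to infer minor-freeness from the hypergraph's shape. Concretely, the odd-$K_4$ with weights making a tight odd triangle, or the odd-$K_3^2$ with a tight triangle, each produce a single spanning hyperedge whose induced subgraph visibly contains the forbidden minors, so Theorem~\ref{lem:l1embedding} simply does not apply piecewise. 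The correct atomic tool is Lemma~\ref{cycle}, which needs no minor hypothesis: any feasible $X$ of ${\rm P}(G,\Sigma,c)$ restricts on each tight odd cycle to that cycle's \emph{unique} rank-two solution, in which all points lie on a geodesic arc of length less than $\pi$ and hence are pairwise linearly independent. Merging two hyperedges over $\geq 2$ shared vertices then forces their two-dimensional spans to coincide and determines every cross inner product, and induction along the merging process handles a spanning hyperedge; for the triangle case the three planes are pinned along the three pairwise intersection vertices, whose mutual Gram matrix is determined, giving uniqueness and rank at most three. (The paper itself offers no written proof --- it asserts the lemma follows from Theorem~\ref{lem:l1embedding} --- but the general-graph statement cannot be obtained the way you propose.)

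Two smaller points. First, you spend effort constructing a feasible completion (the rotation/gluing in the triangle case and the ``closing up at $v_3$'' worry), but existence is a hypothesis of the lemma ($c\in{\cal E}(G,\Sigma)$); you only need to show that every feasible solution is determined and has the claimed rank, so no construction and no perturbation bookkeeping is required. Second, your concern that non-tight edges might ``cut down the solution set'' is vacuous for the same reason: the feasible set of ${\rm P}(G,\Sigma,c)$ is contained in that of the relaxed problem on the tight edges, so uniqueness of the latter plus nonemptiness of the former immediately yields uniqueness of the former.
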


The reverse implication is not true in general but turns out to be true if $(G,\Sigma)$ is odd-$K_4$ minor free. We first give a characterization for odd-$K_4$ minor and odd-$K_3^2$ minor free graphs.
\begin{theorem}
\label{thm:unique_K32}
Let $(G,\Sigma)$ be a signed graph and let $c\in [-1,1]^E$ with ${\rm arccos}(c)/\pi\in {\rm MET}(G, \Sigma)$. 
Suppose that $(G, \Sigma)$ is odd-$K_4$ minor and odd-$K_3^2$ minor free. 
Then the following holds:
\begin{itemize}
\setlength{\parskip}{0.1cm} 
 \setlength{\itemsep}{0.1cm} 
\item If ${\cal H}(G,\Sigma, c)$ has only one vertex, 
then ${\rm P}(G,\Sigma, c)$ is uniquely solvable and its solution has rank one.
\item If ${\cal H}(G,\Sigma, c)$ has a hyperedge spanning all the vertices, 
then ${\rm P}(G,\Sigma, c)$ is uniquely solvable and its solution has rank at most two.
\item Otherwise, ${\rm P}(G,\Sigma, c)$ is not uniquely solvable.
\end{itemize} 
\end{theorem}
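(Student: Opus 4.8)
The plan is to deal with the three cases in turn, using Lemma~\ref{lem:unique} for the positive parts and a separate argument for the "otherwise" case. The first two bullets are immediate from Lemma~\ref{lem:unique}, since the hypotheses there are exactly the hypotheses of the first two bullets here, and $c\in\mathcal{E}(G,\Sigma)$ holds because ${\rm arccos}(c)/\pi\in{\rm MET}(G,\Sigma)$ together with odd-$K_4$ minor freeness and Theorem~\ref{thm:oddK4projection}. So the real content is the contrapositive of the third bullet: if $\mathcal{H}(G,\Sigma,c)$ has no hyperedge spanning $V(G)$ and more than one vertex, then ${\rm P}(G,\Sigma,c)$ has at least two distinct solutions. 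First I would reduce to the nondegenerate case by the usual contraction/resigning argument (which is how $\mathcal{H}(G,\Sigma,c)$ is defined anyway), so that ${\cal C}_c$ is the set of tight odd cycles and $\mathcal{H}({\cal C}_c)=\mathcal{H}(G,\Sigma,c)$; uniqueness of the completion is preserved under these operations.

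Next I would invoke the construction from the proof of Lemma~\ref{lem:K32free}. That proof already produces, from a rank-$\le 2$ solution $X'$ of the "all tight or degenerate" relaxation ${\rm P}(G,\Sigma,c')$, a maximum rank solution $X={\rm Gram}(p)$ by rotating the point sets of the fractions of $\mathcal{H}({\cal C}_c)$ at its cut vertices, with $\rank X = |E(\mathcal{H}({\cal C}_c))| + \omega$ where $\omega$ is the number of connected components of $\mathcal{H}({\cal C}_c)$. Crucially, by Lemma~\ref{lem:K32acyclic} the hypergraph $\mathcal{H}({\cal C}_c)$ is acyclic, and by the assumption of the third bullet it has at least two vertices and no hyperedge containing all of them; in this situation the rotation at a cut vertex (if $\mathcal{H}({\cal C}_c)$ is connected with $\geq 2$ hyperedges) or the independent embedding of two components (if $\mathcal{H}({\cal C}_c)$ is disconnected, including the case of isolated vertices) genuinely has a positive-dimensional family of choices: two different small rotations give two feasible solutions $X_1\neq X_2$. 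The only thing to check is that these remain feasible on the non-tight edges; since $\sigma(e)c'(e)>\sigma(e)c(e)$ strictly on $E(G)\setminus T$, choosing the rotations small enough keeps the inequalities satisfied, exactly as in the proof of Lemma~\ref{lem:K32free}. Hence ${\rm P}(G,\Sigma,c)$ is not uniquely solvable.

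I expect the main obstacle to be the bookkeeping showing that a nontrivial rotation is actually available whenever $\mathcal{H}({\cal C}_c)$ is not a single hyperedge spanning $V(G)$. One must handle, uniformly, the cases where $\mathcal{H}({\cal C}_c)$ is connected with at least two hyperedges (rotate one fraction about a cut vertex), where it is disconnected into several nontrivial components (place the components in orthogonal subspaces and rotate one), and where some vertices are isolated or belong only to singleton structure (a vertex in no tight odd cycle is constrained only by non-tight strict inequalities, so its image $p(v)$ can be perturbed freely). A clean way to phrase this is: $\rank X < |V(G)|$ in all three sub-cases, so $\Omega$ has positive corank beyond the elliptope constraints in a direction that is not pinned down by any tight equation, giving a curve of solutions through $X$. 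I would also remark that this is where odd-$K_3^2$ minor freeness is essential — it is what makes $\mathcal{H}({\cal C}_c)$ acyclic (Lemma~\ref{lem:K32acyclic}), hence decomposable via cut vertices, so that the local rotation freedom is genuine rather than obstructed by a cycle in the incidence bipartite graph.
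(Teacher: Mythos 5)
Your proposal is correct and follows essentially the same route as the paper: reduce to the nondegenerate case, dispatch the first two bullets via Lemma~\ref{lem:unique}, and for the third bullet exploit the acyclicity of ${\cal H}({\cal C}_c)$ from Lemma~\ref{lem:K32acyclic} to rotate one fraction at a cut vertex (or one connected component, or an isolated vertex) of ${\cal H}({\cal C}_c)$, which yields a second feasible solution because every edge whose endpoints do not lie in a common hyperedge is non-tight and hence has strict slack. The only cosmetic difference is that the paper manufactures this slack by perturbing $c$ on the non-hyperedge edges and re-solving via Theorem~\ref{thm:oddK4projection}, whereas you reuse the slack already present in the solution constructed in the proof of Lemma~\ref{lem:K32free}; both variants hinge on exactly the same rotation argument.
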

\begin{proof}
By Lemma~\ref{lem:unique} we need to show that, if ${\cal H}(G,\Sigma, c)$ has an isolated vertex or 
more than one hyperedge, then ${\rm P}(G,\Sigma, c)$ is not uniquely solvable. 
Since the unique solvability is invariant under reduction 
(that is, a sequence of resining and contraction of degenerate even edges) we may focus on the case when 
$c$ is nondegenerate. 
Then we have ${\cal H}(G,\Sigma, c)={\cal H}({\cal C}_c)$.

Construct $c'$ such that 
\begin{equation}
\label{eq:universal1}
c'(ij)=\begin{cases}
c(ij) & \text{ if $i$ and $j$ belong to a hyperedge in ${\cal H}({\cal C}_c)$} \\
c(ij)-\sigma(ij)\epsilon & \text{ otherwise }
\end{cases} \qquad (ij\in E(G))
\end{equation}
for some positive number $\epsilon$ which will be specified later.
Let ${\cal C}'$ be the set of tight odd cycles with respect to $c'$.
If $\epsilon>0$ is sufficiently small, ${\cal C}_c={\cal C}'$
and ${\rm arccos}(c')/\pi \in {\rm MET}(G, \Sigma)$.
Hence by Theorem~\ref{thm:oddK4projection} there is $q:V(G)\rightarrow \mathbb{S}^{d'}$ for some $d'>0$ such that ${\rm Gram}(q)$ is a solution of ${\rm P}(G, \Sigma, c')$.

Since $(G,\Sigma)$ is odd-$K_4$ minor and odd-$K_3^2$ minor free, 
Lemma~\ref{lem:K32acyclic} implies that ${\cal H}({\cal C}')$ is acyclic.
Hence, taking a cut vertex $v$ of ${\cal H}({\cal C}')$,
we can continuously rotate the points $q(u)$ in each fraction of ${\cal H}({\cal C}')$ at the cut vertex $v$ 
so that 
\begin{itemize}
\setlength{\parskip}{0.1cm} 
 \setlength{\itemsep}{0.1cm} 
\item $q(i)\cdot q(j)$ remains unchanged if $i$ and $j$ belong to a  hyperedge,
and 
\item $q(i)\cdot q(j)$ changes for some pair $i$ and $j$.
\end{itemize}
Due to the triangle inequality in spherical space, we can control the change of 
$|q(i)\cdot q(j)|$ by arbitrary small positive number $\epsilon'$.
Therefore, if $\epsilon'$ is taken to be $\epsilon'<\epsilon$, then the resulting ${\rm Gram}(q)$ is feasible in  ${\rm P}(G,\Sigma, c)$. 
Thus there are more than one solution in ${\rm P}(G,\Sigma, c)$.
\end{proof}

\begin{theorem}
\label{thm:unique_K4}
Let $(G,\Sigma)$ be a signed graph and let $c\in [-1,1]^E$ with ${\rm arccos}(c)/\pi\in {\rm MET}(G, \Sigma)$. 
Suppose that $(G, \Sigma)$ is odd-$K_4$ minor free. 
Then the following holds:
\begin{itemize}
\setlength{\parskip}{0.1cm} 
 \setlength{\itemsep}{0.1cm} 
\item If ${\cal H}(G,\Sigma, c)$ has only one vertex, 
then ${\rm P}(G,\Sigma, c)$ is uniquely solvable and its solution has rank one.
\item If ${\cal H}(G,\Sigma, c)$ has a hyperedge spanning all the vertices, 
then ${\rm P}(G,\Sigma, c)$ is uniquely solvable and its solution has rank at most two.
\item If ${\cal H}(G,\Sigma, c)$ forms a triangle, 
then ${\rm P}(G,\Sigma, c)$ is uniquely solvable and its solution has rank at most three.
\item Otherwise, ${\rm P}(G,\Sigma, c)$ is not uniquely solvable.
\end{itemize} 
\end{theorem}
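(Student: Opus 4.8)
The three positive implications are exactly Lemma~\ref{lem:unique}, so the whole content is the converse: assuming ${\cal H}(G,\Sigma,c)$ is neither a single vertex, nor a single hyperedge spanning $V(G)$, nor a triangle, I must produce two distinct feasible matrices. Unique solvability is invariant under the reduction (resigning together with contracting degenerate even edges), and this reduction is precisely what defines ${\cal H}(G,\Sigma,c)$, so I may assume $c$ is nondegenerate and ${\cal H}(G,\Sigma,c)={\cal H}({\cal C}_c)$. I would then induct on $|V(G)|$ via the dichotomy of Theorem~\ref{thm:structural}. If $(G,\Sigma)$ is odd-$K_3^2$ minor free, the statement is Theorem~\ref{thm:unique_K32}: by Lemma~\ref{lem:K32acyclic} the hypergraph ${\cal H}({\cal C}_c)$ is acyclic, hence cannot be a triangle, so the two ``otherwise'' clauses coincide. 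If $(G,\Sigma)$ is equivalent to the odd-$K_3^2$, a direct inspection of the at most three pairwise $2$-cycles and the possible tight triangle settles all cases, since each non-tight edge leaves its inner product free (so the only uniquely solvable situations are ``all three $2$-cycles tight'', giving a triangle, or ``a tight triangle'', giving a spanning hyperedge).

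Suppose next $(G,\Sigma)$ has a cut vertex $w$, writing $G=G_1\cup_w G_2$ with $V(G_i)\setminus\{w\}\neq\emptyset$. Every odd cycle of $G$ lies wholly in $G_1$ or in $G_2$, so ${\cal H}({\cal C}_c)$ is the union of the two restricted hypergraphs glued at $w$; a single hyperedge therefore lives on one side, and three hyperedges split between the two sides meet pairwise only at $w$, so ${\cal H}({\cal C}_c)$ can be neither a spanning hyperedge nor a triangle: we are in the ``otherwise'' case and must exhibit two solutions. To do so I would choose a feasible $X={\rm Gram}(p)$ whose two restrictions $p|_{V(G_1)}$ and $p|_{V(G_2)}$ both span a common subspace of dimension at least two (possible unless $(G,\Sigma)$ is balanced, a case dispatched by perturbing an all-even completion of rank one), and then apply a nontrivial orthogonal transformation fixing $p(w)$ to one side only; since $p(w)$ is fixed this preserves all edge inner products, and since the moved side is not contained in $\spa\{p(w)\}$ it changes the Gram matrix.

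The strong $2$-split is the crux. Let $(G_1,\Sigma_1)$ and $(G_2,\Sigma_2)$ be the parts sharing $\{u,v\}$, with auxiliary parallel edges $f_{i,e},f_{i,o}$, and reuse the quantities $\lambda_{i,e},\lambda_{i,o}$ and the interval $I$ from the proof of Theorem~\ref{thm:oddK4}. If $I$ has more than one point then, exactly as in that proof, $X[u,v]$ ranges over more than one value among feasible solutions, so ${\rm P}(G,\Sigma,c)$ is not uniquely solvable; moreover whenever ${\cal H}({\cal C}_c)$ is a spanning hyperedge or a triangle the vertices $u,v$ lie in a common hyperedge, whose embedding is rigid, so $X[u,v]$ would be forced --- hence $|I|>1$ already places us in the ``otherwise'' case, consistently. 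So assume $I=\{\lambda^*\}$ is a single point; then $G$ has a tight odd cycle through both $u$ and $v$, and with $c_i$ defined as in the proof of Theorem~\ref{thm:oddK4} (so $c_i(f_{i,e})=c_i(f_{i,o})=\cos(\pi\lambda^*)$) the two auxiliary edges force $X_i[u,v]=\cos(\pi\lambda^*)$ in every solution of ${\rm P}(G_i,\Sigma_i,c_i)$. If one of ${\cal H}(G_i,\Sigma_i,c_i)$ is not a single vertex, a spanning hyperedge, or a triangle, then by induction ${\rm P}(G_i,\Sigma_i,c_i)$ has two distinct solutions; since they agree on the $\{u,v\}$-block, concatenating Gram representations as in the proof of Lemma~\ref{lem:glueing} with any fixed solution of the other part lifts them to two distinct solutions of ${\rm P}(G,\Sigma,c)$ (using that $(G_i,\Sigma_i,c_i)$ and the other part are both feasible by Theorem~\ref{thm:oddK4projection}, and that $V(G_i)=V(\tilde G_i)$). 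Otherwise both ${\cal H}(G_i,\Sigma_i,c_i)$ are of one of the three special shapes, so both parts are uniquely solvable by Lemma~\ref{lem:unique}; then, because $X_i[u,v]$ is forced with $p(u),p(v)$ linearly independent, the reflection across $\spa\{p(u),p(v)\}$ must fix each $p|_{V(G_i)}$, which forces each part --- hence the glued solution --- into the plane $\spa\{p(u),p(v)\}$, and one reads off, by tracing how the tight odd cycles of $(G_i,\Sigma_i,c_i)$ through $f_{i,e}$ and $f_{i,o}$ are completed across the split through a tight path on the other side, that ${\cal H}({\cal C}_c)$ is again a spanning hyperedge or a triangle, contradicting the ``otherwise'' hypothesis.

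The main obstacle is exactly this last piece of bookkeeping: $f_{i,e}$ and $f_{i,o}$ are strictly tight in $(G_i,\Sigma_i,c_i)$ only according to whether $\lambda^*$ equals $\lambda_{i,o}$ or $\lambda_{i,e}$, and one must verify, over the few resulting configurations of where $\{u,v\}$ sits inside ${\cal H}(G_i,\Sigma_i,c_i)$, that merging the two sides' hyperedges along $\{u,v\}$ via the cross-split tight odd cycles yields precisely a spanning hyperedge or a genuine triangle --- never a ``star'' at $u$ or at $v$, and never an extra isolated vertex. One also has to check that the initial reduction to nondegenerate $c$ interacts cleanly with the strong $2$-split (i.e.\ that degenerate even edges inside a part can be contracted before the split is taken). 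Everything else is the dimension counting already carried out in the proof of Theorem~\ref{thm:oddK4} (corank of the glued dual equals the number of hyperedges plus the number of components of ${\cal H}$) together with the gluing of Lemma~\ref{lem:glueing}.
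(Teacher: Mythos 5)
Your proposal is correct and follows essentially the same route as the paper: reduce to nondegenerate $c$, induct through Theorem~\ref{thm:structural}, invoke Theorem~\ref{thm:unique_K32} in the odd-$K_3^2$ minor free case, and split the strong $2$-split analysis according to whether the interval $I$ is a single point (forced $X[u,v]$, hypergraphs merge) or not (add parallel edges at a different value of $\lambda^*$ and apply Theorem~\ref{thm:oddK4projection} to produce a second solution). The one step you explicitly defer --- verifying that in the tight case the merged hypergraph is a spanning hyperedge or a genuine triangle rather than a star --- is exactly what the paper supplies via the fourth bullet of Claim~\ref{claim:main_cycle}: the concatenation of a tight odd cycle through $f_{i,e}$ in one part with one through $f_{j,o}$ in the other is a single tight odd cycle of $(G,\Sigma)$ containing both $u$ and $v$, so ${\cal H}(G,\Sigma,c)$ is obtained from ${\cal H}(G_2,\Sigma_2,c_2)$ by enlarging the hyperedge containing $\{u,v\}$ to include all of $V(G_1)$, which rules out stars and isolated vertices at once.
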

\begin{proof}
We may again assume that $c$ is nondegenerate, and  ${\cal H}(G,\Sigma, c)={\cal H}({\cal C}_c)$.
The proof is done by induction on the number of vertices.
By Theorem~\ref{thm:universal_K32} and Lemma~\ref{lem:K32acyclic}, 
the statement follows if $(G,\Sigma)$ is odd-$K_3^2$ minor free.

Suppose that $(G,\Sigma)$ is odd-$K_3^2$.
If there is a tight triangle with respect to $c$, then ${\rm P}(G,\Sigma,c)$ has a unique rank-two solution by Lemma~\ref{K3}, implying the second case of the statement.
If there is no tight triangle but each parallel class forms a tight cycle, then the third case of the statement holds.
If none of them is applicable, then in the proof of Lemma~\ref{K32} we have seen that ${\rm P}(G, \Sigma, c)$ has more than one solution. Hence the statement follows.

Thus we  assume that $(G,\Sigma)$ is odd-$K_3^2$ minor free and is not equivalent to odd-$K_3^2$.
Then by Theorem~\ref{thm:structural} $(G, \Sigma)$ has a cut vertex or a strong 2-split.
If $(G, \Sigma)$ has a cut vertex, then ${\rm P}(G, \Sigma, c)$ is  not uniquely solvable.
Hence we focus on the case when $(G, \Sigma)$ has a strong 2-split.

From Lemma~\ref{lem:unique} we need to show that, if none of the first three conditions are satisfied, then ${\rm P}(G,\Sigma, c)$ is not uniquely solvable. Suppose for a contradiction that ${\rm P}(G, \Sigma, c)$ is uniquely solvable.
Let ${\rm Gram}(p)$ be the solution for some $p:V(G)\rightarrow \mathbb{S}^d$.
Let $(G_1, \Sigma_1)$ and $(G_2, \Sigma_2)$ are the parts of the strong 2-split,
and denote $V(G_1)\cap V(G_2)=\{u,v\}$.
Let $p_i$ be the restriction of $p$ to $V(G_i)$.
We may assume that $p(u)\neq p(v)$, since otherwise 
${\rm P}(G, \Sigma, c)$ has more than one solution since 
$\{u,v\}$ is a cut set of $G$ and $|V(G_i)|\geq 3$.
Hence, setting $c_i\in \pi_{G_i}({\rm Gram}(p_i))$,  each $c_i$ is nondegenerate for each $i=1,2$.

Since $G_i$ contains parallel edges between $u$ and $v$,  for any solution $X_i$ of ${\rm P}(G_i, \Sigma_i, c_i)$ we have  $X_1[u,v]=p(u)\cdot p(v)=X_2[u,v]$, and hence $X_1$ and $X_2$
can be glued together to be a solution $X$ of ${\rm P}(G, \Sigma, c)$.
The resulting solution $X$ can be distinct from ${\rm Gram}(p)$ if 
both $X_1$ and $X_2$ have rank at least three.
Hence we may assume that $X_1$ has rank two.
In the same reason,  we may assume that each ${\rm P}(G_i, \Sigma_i, c_i)$ has a unique solution
(since otherwise we can get more than one solution of ${\rm P}(G, \Sigma, c)$ by glueing).
Hence by induction
${\cal H}(G_1, \Sigma_1, c_1)$ consists of single hyperedge spanning all the vertices,
and ${\cal H}(G_2, \Sigma_2, c_2)$ consists of single hyperedge  or forms a triangle.

The remaining of the proof follows the proof of Theorem~\ref{thm:oddK4}.
Recall that $(G_i, \Sigma_i)$  has an odd edge $f_{i,o}$ and an even edge $f_{i,e}$  
between $u$ and $v$, which are not contained in $(G,\Sigma)$. Let $(\tilde{G}_i, \tilde{\Sigma}_i)=(G_i-f_{i,e}-f_{i,o}, \Sigma_i-f_{i,o})$.
We use $\lambda_{i,o}$ and $\lambda_{i,e}$ defined in (\ref{eq:lambda}).
Also let $\lambda^*$ be any number  in the relative interior of $I:=\{\lambda\in (0,1): \lambda_{i,o}\leq \lambda\leq \lambda_{i,e}\}$.
The proof is split into three cases  depending on the values of $\lambda_{i,o}$ and $\lambda_{i,e}$.

(Case 1) Suppose that $\lambda_{i,o}=\lambda_{j,e}$ for some $i,j\in\{1,2\}$.
By (\ref{eq:lambda1}), $\lambda_{i,o}=\lambda^*=\lambda_{j,e}$. 
Hence, by Claim~\ref{claim:main_cycle}, 
$(G_i, \Sigma_i)$ has a tight odd cycle $C_o$ that passes through $f_{i,o}$ but not through $f_{i,e}$,
$(G_j, \Sigma_j)$ has a tight odd cycle $C_e$ that passes  through $f_{j,e}$ but not through $f_{j,o}$, and 
$C:=(C_o\setminus \{f_{i,o}\})\cup (C_e\setminus \{f_{j,e}\})$ is a tight odd cycle  in $(G,\Sigma)$.
Let $e^*$ be a hyperedge in ${\cal H}(G_2, \Sigma_2, c_2)$ to which $u$ and $v$ are belonging
(which exists since $\{f_{2,o}, f_{2,e}\}$ forms a tight cycle).
Then  the existence of $C$ implies that ${\cal H}(G,\Sigma,c)$ is obtained from ${\cal H}(G_2,\Sigma_2,c_2)$ 
by replacing $e^*$ with $V(G_1)\cup e^*$.
(To see this, consider the process of constructing ${\cal H}(G_i, \Sigma_i, c_i)$ that involves the tight cycle $\{f_{i,o}, f_{i,e}\}$.
We replace $\{f_{i,o}, f_{i,e}\}$ with $C$ in each process to get a partial process for constructing ${\cal H}(G,\Sigma,c)$. Concatenating those two partial processes,  we get a process of constructing ${\cal H}(G,\Sigma,c)$.
The process ends up with a hypergraph where $e^*$ will be replaced with $V(G_1)\cup e^*$ in ${\cal H}(G_2,\Sigma_2,c_2)$ since ${\cal H}(G_1, \Sigma_1, c_1)$ consists of just one hyperedge.)
Therefore ${\cal H}(G,\Sigma,c)$ consists of a single edge or forms a triangle,
which is a contradiction.

(Case 2) Suppose that  $\lambda_{i,o}<\lambda_{j,e}$ for any $i, j\in \{1,2\}$.
In this case we have $\lambda_{i,o}<\lambda^*<\lambda_{i,e}$
for every $i\in \{1,2\}$.
Moreover, we can take $\lambda^*$  such that 
$\lambda^*\neq p(u)\cdot p(v)$ 
since $I$ forms an interval.
Let $(G', \Sigma')$ be a signed graph obtained from $(G, \Sigma)$ by adding two parallel edges $e_1$ and $e_2$ between $u$ and $v$ with distinct signs, and extend $c\in [0,1]^{E(G)}$ to $c'\in [0,1]^{E(G')}$ 
by setting $c'(e_1)=c'(e_2)=\lambda^*$.
Due to the definition of $\lambda^*$, ${\rm arccos}(c')/\pi\in {\rm MET}(G', \Sigma')$.
Moreover, $(G', \Sigma')$ is still odd-$K_4$ minor free, since $\{u,v\}$ is a cut set.
Hence by Theorem~\ref{thm:oddK4projection} 
${\rm P}(G', \Sigma', c')$ has a solution $Y$.
Since $(G,\Sigma)$ is a subgraph of $(G', \Sigma')$, $Y$ is a solution of  ${\rm P}(G, \Sigma, c)$
and $Y[u,v]=c'(uv)\neq p(u)\cdot p(v)$.
Therefore ${\rm P}(G', \Sigma', c')$ has more than one solution, implying that $(G,p)$ is not universally rigid. 
\end{proof}

As remarked in Section~\ref{subsec:algorithms}, ${\cal H}({\cal C})$ can be computed in polynomial time if ${\rm arccos}(c)/\pi$ is given as input. Hence the condition can be tested.
We also remark that the PSD matrix completion problem cannot be uniquely solvable  if a diagonal entry is missing.
For the unique solvability of the rank-constrained matrix completion problem, see, e.g., \cite{jjt,ktt}.


\section{Characterizing Universal Rigidity of Odd-$K_4$ Minor Free Graphs}
\label{sec:universal}
A {\em spherical tensegrity} is a tuple $(G, \Sigma, p)$ of a signed graph $(G, \Sigma)$ and $p:V\rightarrow \mathbb{S}^d$ for some integer $d$.
One central problem in the rigidity theory is to decide the {\em global rigidity} of tensegrities (see, e.g., \cite{c14} for the definition).
As a relaxation of global rigidity, Zhu, So, and Ye~\cite{zsy} introduced the {\em universal rigidity} of tensegrities. In terms of the PSD matrix completion problem,  a spherical tensegrity $(G,\Sigma, p)$ is defined to be universally rigid if ${\rm P}(G,\Sigma,\pi_G({\rm Gram}(p)))$ has a unique solution.
Thus Theorem~\ref{thm:unique_K32} and Theorem~\ref{thm:unique_K4} directly implies the following characterizations of universal rigidity.

\begin{corollary}
\label{thm:universal_K32}
Let $(G,\Sigma, p)$ be a spherical tensegrity in $\mathbb{S}^d$ for some positive integer $d$,
and let $c=\pi_G({\rm Gram}(p))$.
Suppose that $(G, \Sigma)$ is odd-$K_4$ minor and odd-$K_3^2$ minor free. 
Then the following holds:
\begin{itemize}
\setlength{\parskip}{0.1cm} 
 \setlength{\itemsep}{0.1cm} 
\item If ${\cal H}(G,\Sigma, c)$ has only one vertex, 
then $(G,p)$ is universally rigid with $\dim \spa p(V)=1$.
\item If ${\cal H}(G,\Sigma, c)$ has a hyperedge spanning all the vertices, then $(G,p)$ is universally rigid with $\dim \spa p(V)\leq 2$.
\item Otherwise, $(G,\sigma, c)$ is not universally rigid. 
\end{itemize} 
\end{corollary}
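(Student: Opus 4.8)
The plan is to derive the corollary directly from Theorem~\ref{thm:unique_K32} by unwinding the definition of universal rigidity. First I would observe that, by definition, $(G,\Sigma,p)$ is universally rigid precisely when the completion problem ${\rm P}(G,\Sigma,c)$ with $c=\pi_G({\rm Gram}(p))$ has a unique solution; moreover, since ${\rm Gram}(p)$ is plainly a feasible solution of ${\rm P}(G,\Sigma,c)$ (every edge constraint holds with equality), we have $c\in {\cal E}(G,\Sigma)$, and Lemma~\ref{lem:met_poly} then gives ${\rm arccos}(c)/\pi\in {\rm MET}(G,\Sigma)$. Hence the hypotheses of Theorem~\ref{thm:unique_K32} are satisfied for this $c$, and the odd-$K_4$ minor and odd-$K_3^2$ minor freeness of $(G,\Sigma)$ is exactly what that theorem assumes.

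Next I would simply read off the three cases. If ${\cal H}(G,\Sigma,c)$ has a single vertex, Theorem~\ref{thm:unique_K32} states that ${\rm P}(G,\Sigma,c)$ is uniquely solvable with a rank-one solution; since the unique solution must be ${\rm Gram}(p)$ and $\rank {\rm Gram}(p)=\dim\spa p(V)$, this is exactly the assertion that $(G,p)$ is universally rigid with $\dim\spa p(V)=1$. The case where ${\cal H}(G,\Sigma,c)$ has a hyperedge spanning all the vertices is handled identically, the rank bound $\leq 2$ translating to $\dim\spa p(V)\leq 2$. Finally, in all remaining cases Theorem~\ref{thm:unique_K32} asserts that ${\rm P}(G,\Sigma,c)$ is not uniquely solvable, i.e.\ $(G,\Sigma,p)$ is not universally rigid.

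Since the substantive content---the acyclicity of ${\cal H}({\cal C})$ (Lemma~\ref{lem:K32acyclic}), the explicit low-rank completions of Theorem~\ref{lem:l1embedding}, and the rotation/perturbation argument that produces a second solution---is already contained in Theorem~\ref{thm:unique_K32} and its proof, there is essentially no obstacle here. The only step requiring a line of care is the identification of the unique solution with ${\rm Gram}(p)$: because $c=\pi_G({\rm Gram}(p))$ forces equality on every edge, ${\rm Gram}(p)$ lies in the feasible region, so whenever that region is a singleton it equals ${\rm Gram}(p)$, whose rank is precisely $\dim\spa p(V)$. One should also recall that the definition of ${\cal H}(G,\Sigma,c)$ already builds in the reduction to the nondegenerate case, so no separate treatment of edges $ij$ with $p(i)\cdot p(j)=\pm 1$ is required.
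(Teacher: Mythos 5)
Your proposal is correct and follows exactly the route the paper takes: the paper derives this corollary directly from Theorem~\ref{thm:unique_K32} via the definition of universal rigidity as unique solvability of ${\rm P}(G,\Sigma,\pi_G({\rm Gram}(p)))$. Your added care in verifying the metric-polytope hypothesis via Lemma~\ref{lem:met_poly} and in identifying the unique solution with ${\rm Gram}(p)$ (so that the rank bounds become bounds on $\dim\spa p(V)$) fills in precisely the small details the paper leaves implicit.
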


\begin{corollary}
\label{thm:universal_K4}
Let $(G,\Sigma, p)$ be a spherical tensegrity in $\mathbb{S}^d$ for some positive integer $d$,
and let $c=\pi_G({\rm Gram}(p))$.
Suppose that $(G, \Sigma)$ is odd-$K_4$ minor free.
Then the following holds:
\begin{itemize}
\setlength{\parskip}{0.1cm} 
 \setlength{\itemsep}{0.1cm}
\item If ${\cal H}(G,\Sigma, c)$ has only one vertex, 
then $(G,p)$ is universally rigid  with $\dim \spa p(V)=1$.
\item If ${\cal H}(G,\Sigma, c)$ has a hyperedge spanning all the vertices, then $(G,p)$ is universally rigid with $\dim \spa p(V)\leq 2$.
\item If ${\cal H}(G,\Sigma, c)$ forms a triangle, then $(G,p)$ is universally rigid with $\dim \spa p(V)\leq 3$.
\item Otherwise, $(G,\Sigma, c)$ is not universally rigid.
\end{itemize} 
\end{corollary}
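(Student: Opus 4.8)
The plan is to obtain this as an immediate consequence of Theorem~\ref{thm:unique_K4}, using the fact that, by definition, $(G,\Sigma,p)$ is universally rigid precisely when ${\rm P}(G,\Sigma,c)$ has a unique solution, where $c=\pi_G({\rm Gram}(p))$.

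First I would verify that the hypotheses of Theorem~\ref{thm:unique_K4} are met. The matrix ${\rm Gram}(p)$ is itself a feasible solution of ${\rm P}(G,\Sigma,c)$: its $i$-th diagonal entry is $p(i)\cdot p(i)=1$ because $p(i)\in\mathbb{S}^d$, and its $(i,j)$-entry is $p(i)\cdot p(j)=c(ij)$, which satisfies both the inequality $X[i,j]\ge c(ij)$ on even edges and $X[i,j]\le c(ij)$ on odd edges. Hence $c\in{\cal E}(G,\Sigma)$, and by Lemma~\ref{lem:met_poly} we get ${\rm arccos}(c)/\pi\in{\rm MET}(G,\Sigma)$; together with the odd-$K_4$ minor freeness of $(G,\Sigma)$ this is exactly what Theorem~\ref{thm:unique_K4} requires.

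Next I would read off the four cases. In each of the first three cases Theorem~\ref{thm:unique_K4} asserts that ${\rm P}(G,\Sigma,c)$ is uniquely solvable; since ${\rm Gram}(p)$ is feasible, that unique solution must equal ${\rm Gram}(p)$, so $(G,\Sigma,p)$ is universally rigid. Writing ${\rm Gram}(p)=P^{\top}P$ with $P$ the matrix whose columns are the vectors $p(v)$, we have $\rank{\rm Gram}(p)=\rank P=\dim\spa p(V)$, so the rank bounds $1$, $\le 2$, $\le 3$ of Theorem~\ref{thm:unique_K4} translate directly into the claimed bounds on $\dim\spa p(V)$. In the remaining case Theorem~\ref{thm:unique_K4} gives that ${\rm P}(G,\Sigma,c)$ has more than one solution, i.e.\ $(G,\Sigma,p)$ is not universally rigid. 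The analogous statement Corollary~\ref{thm:universal_K32} for odd-$K_3^2$ minor free signed graphs follows in exactly the same way from Theorem~\ref{thm:unique_K32}.

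I do not expect a genuine obstacle here: the whole content is carried by Theorem~\ref{thm:unique_K4} and the definition of universal rigidity. The only points requiring a line of care are the identification $\rank{\rm Gram}(p)=\dim\spa p(V)$ and the observation that the (unique) completion is necessarily ${\rm Gram}(p)$ rather than some other PSD matrix; both are routine.
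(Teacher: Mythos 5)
Your proposal is correct and matches the paper exactly: the paper states that Corollary~\ref{thm:universal_K4} follows directly from Theorem~\ref{thm:unique_K4} via the definition of universal rigidity as unique solvability of ${\rm P}(G,\Sigma,\pi_G({\rm Gram}(p)))$, which is precisely your argument. Your added care in checking that ${\rm Gram}(p)$ is feasible (hence ${\rm arccos}(c)/\pi\in{\rm MET}(G,\Sigma)$ by Lemma~\ref{lem:met_poly}) and that $\rank{\rm Gram}(p)=\dim\spa p(V)$ fills in the routine details the paper leaves implicit.
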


Following a terminology introduced by Connelly~\cite{c,c14}, we say that 
a spherical tensegrity $(G,\Sigma, p)$ in $\mathbb{S}^{d-1}$ is {\em super stable} 
if $P(G, \Sigma, c)$ has a dual optimal solution $\omega$ of corank $d$ such that 
there is no nonzero symmetric matrix  $S$ of size $d$ satisfying  $p(i)^{\top} S p(j)=0$ for all  $ij\in V(G)\cup J_{\omega}$,
where  $c=\pi_G({\rm Gram}(p))$ and $J_{\omega}=\{ij\in E(G)\mid \omega(ij)\neq 0\}$.
It is known that super stability is a sufficient condition for universal rigidity~\cite{c,lv}.

We say that $(G, \Sigma, p)$ is {\em nondegenerate} if
$\sigma(ij) (p(i)\cdot p(j))\neq \sigma(ij)$ for every $ij\in E(G)$.
Combining the universal rigidity characterization given in Corollary~\ref{thm:universal_K4} with the strict complementarity shown in Theorem~\ref{thm:oddK4} we have the following. 
\begin{corollary}
Let $(G,\Sigma, p)$ be a spherical tensegrity in $\mathbb{S}^d$ for some positive integer $d$,
and let $c=\pi_G({\rm Gram}(p))$.
Suppose that $(G, \Sigma)$ is odd-$K_4$ minor free and $(G,\Sigma, p)$ is nondegenerate.
Then $(G, \Sigma, p)$ is universally rigid if and only if $(G, \Sigma, p)$ is super stable.
\end{corollary}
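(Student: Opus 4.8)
The plan is to prove the two directions separately, with the nontrivial direction being ``universally rigid $\Rightarrow$ super stable''; the converse is already known (super stability always implies universal rigidity, as cited via \cite{c,lv}), so I would only need to recall that reference. For the forward direction, suppose $(G,\Sigma,p)$ is universally rigid, i.e.\ ${\rm P}(G,\Sigma,c)$ has a unique solution, namely ${\rm Gram}(p)$. Since $(G,\Sigma,p)$ is nondegenerate, $c$ is nondegenerate, and since ${\rm Gram}(p)$ is a solution we have $c\in{\cal E}(G,\Sigma)$, hence ${\rm arccos}(c)/\pi\in{\rm MET}(G,\Sigma)$ by Lemma~\ref{lem:met_poly}. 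Now Theorem~\ref{thm:oddK4} applies: there is a dual solution $\omega$, supported on strictly tight edges, satisfying the strict complementarity condition with a maximum rank solution $X^*$ of ${\rm P}(G,\Sigma,c)$. By uniqueness, $X^* = {\rm Gram}(p)$, so $\rank\Omega = |V(G)| - \rank{\rm Gram}(p) = |V(G)| - \dim\spa p(V) = |V(G)| - d$, i.e.\ $\Omega$ has corank $d$. This already gives the dual optimal solution of corank $d$ required in the definition of super stability.

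The remaining point is the ``no conic at infinity'' condition: there should be no nonzero symmetric $d\times d$ matrix $S$ with $p(i)^{\top}Sp(j)=0$ for all $ij\in V(G)\cup J_\omega$ (here $J_\omega=\{ij : \omega(ij)\neq 0\}$, which by Theorem~\ref{thm:oddK4} is contained in the strictly tight edges). Suppose such an $S\neq 0$ exists. The idea is that $S$ lets us perturb $p$ within the feasible region of ${\rm P}(G,\Sigma,c)$, contradicting uniqueness. Concretely, consider $p_t(i) := (\id + tS)p(i)$, rescaled to the sphere, for small $t$. The condition $p(i)^{\top}Sp(j)=0$ on edges $ij\in J_\omega$ together with $p(i)^{\top}Sp(i)=0$ (the $V(G)$ part) means that to first order the inner products $p_t(i)\cdot p_t(j)$ are unchanged on the strictly tight edges, so those stay feasible (and the tight ones stay tight), while on the non-strictly-tight edges the constraints are strict and survive a small perturbation; normalization to the sphere only changes second-order terms. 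Thus for sufficiently small $t$, ${\rm Gram}(p_t)$ is another feasible solution. One must check ${\rm Gram}(p_t)\neq{\rm Gram}(p)$ for $t\neq 0$ small: if ${\rm Gram}(p_t)={\rm Gram}(p)$ for a sequence $t\to 0$, then $\id+tS$ acts as an orthogonal transformation on $\spa p(V)$ for those $t$, forcing $S$ to vanish on $\spa p(V)$, hence $S$ acts trivially in all the relevant inner products and can be taken to be $0$ — contradiction. This contradicts universal rigidity, so no such $S$ exists, and $(G,\Sigma,p)$ is super stable.

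I would organize the proof as: (1) recall super stable $\Rightarrow$ universally rigid from the literature for the ``if'' direction; (2) for ``only if'', invoke Theorem~\ref{thm:oddK4} to produce $\omega$ with the strict complementarity property and deduce corank $\Omega = d$ from uniqueness; (3) carry out the perturbation argument above to rule out a nonzero $S$. The main obstacle is step (3): one has to be careful that the perturbation (a) preserves all inequality constraints, not just the tight ones — which uses that non-strictly-tight edges have slack — and (b) genuinely produces a \emph{distinct} Gram matrix rather than a congruence-equivalent one, which is where nondegeneracy and the precise definition of ``strictly tight'' (tight cycles of length $\geq 3$, so $J_\omega$ does not merely force rigid rotations) enter. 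A clean way to package (b) is to note that $S$ may be assumed to restrict nontrivially to $\spa p(V)$ (otherwise replace $S$ by its compression, which still satisfies all the equations and is still nonzero because the original $S$ was relevant only through inner products of the $p(i)$), and then a symmetric $S$ that is nonzero on $\spa p(V)$ cannot be skew-symmetric there, so $\id + tS$ is not orthogonal on $\spa p(V)$ for small $t\neq 0$, giving a genuinely different configuration up to congruence. With that observation the argument goes through.
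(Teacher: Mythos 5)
The ``if'' direction and your step (2) are fine: citing super stability $\Rightarrow$ universal rigidity, and extracting from Theorem~\ref{thm:oddK4} a dual solution $\omega$ supported on strictly tight edges whose $\Omega$ has corank $d=\dim\spa p(V)$ by uniqueness, is exactly what is needed for the first half of the definition. The gap is in step (3), and it is not a technicality: your perturbation argument rests on the premise that ``on the non-strictly-tight edges the constraints are strict and survive a small perturbation.'' That premise is false here. Since $c=\pi_G({\rm Gram}(p))$, we have $c(ij)=p(i)\cdot p(j)$ for \emph{every} edge, so every edge constraint holds with equality at the unique solution ${\rm Gram}(p)$; there is no slack anywhere. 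On an edge $ij\notin J_\omega$ the first-order change of the inner product is $2t\,p(i)^{\top}Sp(j)$, which can have the forbidden sign, and different such edges can force opposite signs of $t$, so no choice of $t$ yields a feasible configuration. (There is a second, lesser problem: even on $J_\omega$ edges, first-order invariance of $p_t(i)\cdot p_t(j)$ does not give feasibility, because the second-order term $t^2(Sp(i))\cdot(Sp(j))$ has a fixed sign; this one you could repair by perturbing the Gram form, $X_t=P^{\top}(\id+tS)P$, instead of the points, but the first problem remains.)

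This is precisely why universal rigidity does \emph{not} imply super stability for general graphs (cf.\ Connelly--Gortler's iterative universal rigidity): a nonzero $S$ in the ``conic at infinity'' need not produce a flex, because the active inequality edges carrying zero stress block both directions of the affine motion. So any correct proof of the forward direction must use odd-$K_4$ minor freeness beyond producing a strictly complementary pair, and indeed the paper's intended route combines Theorem~\ref{thm:oddK4} with Corollary~\ref{thm:universal_K4}, which you never invoke: universal rigidity forces ${\cal H}(G,\Sigma,c)$ to be a single vertex, a single spanning hyperedge, or a triangle of hyperedges, hence $d\le 3$ and a very concrete geometry. In those cases one verifies the conic condition \emph{directly} from the equations on $V(G)\cup J_\omega$ alone: the diagonal conditions $p(i)^{\top}Sp(i)=0$ together with the conditions along the tight odd cycles of length at least three contained in $J_\omega$ (whose consecutive points are pairwise linearly independent by nondegeneracy) already force $S=0$ --- for instance, for $d=2$ a nonzero binary quadratic form vanishes on at most two lines through the origin, while an odd cycle cannot alternate between two lines. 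The inequality edges outside $J_\omega$ never enter that verification, which is how the difficulty your argument runs into is avoided. To fix your proof, replace step (3) by this case analysis driven by Corollary~\ref{thm:universal_K4}.
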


\section{Bounding Signed Colin de Verdi{\`e}re Parameter}\label{sec:colin}

For a signed graph $(G,\Sigma)$ with $n$ vertices, let $S(G, \Sigma)$ be the set of all symmetric matrices $A$ of size $n\times n$ such that 
\begin{itemize}
\setlength{\parskip}{0.1cm} 
 \setlength{\itemsep}{0.1cm} 
\item $A[i,j]<0$ if $i$ and $j$ are connected by only even edges, 
\item $A[i,j]>0$ if $i$ and $j$ are connected by only odd edges, 
\item $A[i,j]\in \mathbb{R}$ if $i$ and $j$ are connected by odd and even edges, 
\item $A[i,j]=0$ if $i$ and $j$ are non-adjacent.
\end{itemize}
A matrix $A\in S(G, \Sigma)$ is said to have the {\em Strong Arnold Property} (SAP for short)
if there is no nonzero symmetric $X$ of size $n$  satisfying  $AX=0$ and  $X[i,j]=0$ for all  $ij\in V(G)\cup E(G)$.
Arav et al.~\cite{ahlv13} defined the {\em signed Colin de Verdi{\`e}re parameter} $\nu(G,\Sigma)$ as the largest corank of any positive semidefinite matrix $A\in S(G, \Sigma)$ that has the SAP.
 In~\cite{ahlv16} they also gave the following characterization.
\begin{theorem}[Arav et al.~\cite{ahlv16}]
\label{thm:colin}
For a signed graph $(G,\Sigma)$, $\nu(G,\Sigma)\leq 2$ if and only if $(G,\Sigma)$
is odd-$K_4$ minor free and odd-$K_3^2$ minor free.
\end{theorem}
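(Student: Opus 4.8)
The plan is to obtain Theorem~\ref{thm:colin} from the universal rigidity characterization (Corollary~\ref{thm:universal_K32}) by translating positive semidefinite matrices in $S(G,\Sigma)$ that have the SAP into super stable spherical tensegrities on $(G,\Sigma)$. The dictionary I will use is: a positive semidefinite $A\in S(G,\Sigma)$ of corank $d$ with the SAP is, up to a positive diagonal congruence, the dual optimal certificate of a spherical tensegrity $(G,\Sigma,p)$ with $\dim\spa p(V)=d$; concretely the rows of a basis of $\ker A$, rescaled to unit length, give a realization $p:V\to\mathbb{S}^{d-1}$, and the SAP of $A$ becomes exactly the transversality condition in the definition of super stability. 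Since super stable tensegrities are universally rigid~\cite{c,lv}, Corollary~\ref{thm:universal_K32} will force $d\le 2$ in the ``if'' direction, while the ``only if'' direction will follow from minor-monotonicity of $\nu$ together with two small explicit certificates (a companion role is played by Corollary~\ref{thm:universal_K4}).

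For the ``only if'' direction I argue the contrapositive: if $(G,\Sigma)$ has an odd-$K_4$ or an odd-$K_3^2$ minor, then $\nu(G,\Sigma)\ge 3$. Since $\nu$ is minor-monotone~\cite{ahlv13} it suffices to exhibit corank-$3$ certificates for odd-$K_4$ and odd-$K_3^2$. In odd-$K_4$ every pair of vertices is joined by an odd edge, so every off-diagonal entry of a matrix in $S(\text{odd-}K_4)$ must be positive; the rank-one matrix $vv^\top$ with $v$ all-positive is positive semidefinite, lies in $S(\text{odd-}K_4)$, has corank $3$, and satisfies the SAP trivially because the underlying graph is complete (an admissible $X$ must vanish on every pair, hence $X=0$). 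In odd-$K_3^2$ every pair is joined by both an odd and an even edge, so $S(\text{odd-}K_3^2)$ is the set of all $3\times 3$ symmetric matrices; the zero matrix is positive semidefinite of corank $3$ and again satisfies the SAP vacuously. Hence $\nu\ge 3$ in both cases, as required.

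For the ``if'' direction, let $(G,\Sigma)$ be odd-$K_4$ and odd-$K_3^2$ minor free and let $A\in S(G,\Sigma)$ be positive semidefinite with the SAP and of corank $d$; I must show $d\le 2$ (if $d\le 1$ there is nothing to prove). First, standard reductions remove the obstructions to placing the associated configuration on a sphere: if $G$ is disconnected, an off-block witness assembled from kernel vectors of two components of positive corank contradicts the SAP, so all of the corank sits in one component; and a vertex $i$ with $\ker A\subseteq {\bf e}_i^{\perp}$ may be deleted, the relevant principal submatrix inheriting positive semidefiniteness, corank and the SAP. So assume $G$ is connected and that every row $q(i)\in\mathbb{R}^d$ of a basis $N$ of $\ker A$ is nonzero. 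Put $p(i)=q(i)/\|q(i)\|\in\mathbb{S}^{d-1}$, let $P$ be the $d\times n$ matrix with columns $p(i)$, $X={\rm Gram}(p)=P^\top P$, $c=\pi_G(X)$, and $D=\mathrm{diag}(\|q(i)\|)$. Then $\im X=D^{-1}\ker A$, so $A':=DAD\in S(G,\Sigma)$ is positive semidefinite of corank $d$ with the SAP, $A'X=0$, and $X[i,j]=c(ij)$ on every edge; hence $A'$ is dual optimal for ${\rm P}(G,\Sigma,c)$ with $\rank X+\rank A'=n$, i.e.\ strict complementarity. Any symmetric $Y$ with $\im Y\subseteq\ker A'$ equals $P^\top S P$ for a unique symmetric $S$ of size $d$ (as $\rank P=d$), and $(P^\top S P)[i,j]=p(i)^\top S p(j)$, so the SAP of $A'$ is verbatim the transversality condition for $(G,\Sigma,p)$ with certificate $A'$ (note $J_{A'}=E(G)$ because $A'\in S(G,\Sigma)$). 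Thus $(G,\Sigma,p)$ is super stable, hence universally rigid~\cite{c,lv}, and Corollary~\ref{thm:universal_K32} applied to the odd-$K_4$ and odd-$K_3^2$ minor free $(G,\Sigma)$ forces $(G,\Sigma,p)$ into one of its first two cases, so $d=\dim\spa p(V)\le 2$ and $\nu(G,\Sigma)\le 2$.

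I expect the main obstacle to be the ``if'' direction of the dictionary, i.e.\ turning a SAP matrix into an honest spherical tensegrity: once every $q(i)$ is nonzero, the rescaling by $D$ makes everything consistent, but ensuring this nondegeneracy requires the vertex-deletion reduction above, and checking that the deleted-down matrix still lies in $S(\cdot)$ with the SAP and full corank is exactly where the Strong Arnold Property does real work (it is the transversality that lets one stay inside the stratum of corank-$d$ matrices in $S(G,\Sigma)$ under such modifications; cf.~\cite{lv14}). Once this normalization is in hand, the SAP/super-stability translation and the appeal to Corollary~\ref{thm:universal_K32} are routine bookkeeping, and the ``only if'' direction is immediate.
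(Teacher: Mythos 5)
Your proof is correct and follows essentially the same route as the paper: the ``only if'' direction via minor-monotonicity of $\nu$ plus explicit corank-three certificates for odd-$K_4$ and odd-$K_3^2$ (which the paper delegates to \cite{ahlv13}), and the ``if'' direction by converting a positive semidefinite SAP matrix into a super stable, hence universally rigid, spherical tensegrity and invoking Corollary~\ref{thm:universal_K32} (the paper delegates this dictionary to \cite[Theorem~5.2]{lv}, while you work out the diagonal rescaling and the degenerate-vertex reduction explicitly). The details you supply — in particular that the vertex-deletion step preserves the corank and the SAP, which follows from positive semidefiniteness since a zero-extended kernel vector of the principal submatrix is automatically in $\ker A$ — are sound.
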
 

It is not difficult to see that the family of signed graphs with $\nu(G,\Sigma)\leq k$ is minor closed for each $k$. Hence  one direction of Theorem~\ref{thm:colin} follows by showing that both odd-$K_4$ and odd-$K_3^2$ have the parameter value more than two, 
which can be done by explicitly constructing a matrix $A$ of corank three for each graph~\cite{ahlv13}.
The main result of \cite{ahlv16} is to prove the other direction (sufficiency), 
and here we shall show how to derive the sufficiency of Theorem~\ref{thm:colin} from Theorem~\ref{thm:universal_K32}.

As observed in \cite[Theorem~5.2]{lv}, if $A\in S(G,\Sigma)$ is positive semidefinite, 
the condition for SAP is equivalent to the following condition:
there is no nonzero symmetric matrix $S$ of size $d$ satisfying  $p(i)^{\top} S p(j)=0$ for all  $ij\in V(G)\cup E(G)$,
where $p$ denotes a map $p: V(G)\rightarrow \mathbb{S}^{d-1}$ such that  
${\rm Gram}(p)$ and $A$ satisfies the  strict complementarity condition.
Therefore, if $A\in S(G,\Sigma)$ satisfies the SAP,  $A$ gives a certificate for the super stability of $(G,\Sigma,p)$. 
Hence $\nu(G,\Sigma)$ is upper bounded by the maximum $d$ such that 
there is a super stable spherical tensegrity $(G,\Sigma, p)$ in $\mathbb{S}^{d-1}$.
Since the super stability is a sufficient condition for universal rigidity~\cite{c,lv}, 
this further implies that
$\nu(G,\Sigma)$ is upper bounded by maximum $d$ such that 
there is a universally rigid spherical tensegrity $(G,\Sigma, p)$ in $\mathbb{S}^{d-1}$
(with the condition that $p(V)$ linearly spans $\mathbb{R}^d$).
Theorem~\ref{thm:universal_K32} says that, if $(G,\Sigma)$ is odd-$K_4$ minor free and odd-$K_3^2$ minor free, then $(G, \Sigma)$ cannot be universally rigid in $\mathbb{S}^{d-1}$ for any $d\geq 3$.
We thus obtain $\nu(G,\Sigma)\leq 2$.

%
%
%

\section{Conclusion}
We conclude the paper by listing open problems.
An interesting open problem  is to establish the signed generalization of a characterization of ${\cal E}(G)$ by Barrett, Johnson, and Loewy~\cite{bjl} in terms of the metric inequalities and the so-called clique conditions.  

We have remarked that, if $(G, \Sigma)$ is odd-$K_4$ minor, then one can compute a completion with rank at most three in polynomial time in the real RAM model. Finding a larger class of signed graphs whose rank-constrained completion problems are tractable would be an important question. See~\cite{b07,bc07,lv14} for the progress on this question for unsigned graphs.

Going beyond the odd-$K_4$ minor free graphs would be a challenging problem in every aspect of this paper. This will be also related to the polynomial-time solvability of the PSD matrix completion problem in the real number model. See, e.g., \cite{l97,l98,l00}.

\section*{Acknowledgement}
The author would like to thank  Monique Laurent for fruitful discussions on the topic of this paper.

This work was supported by JSPS Postdoctoral Fellowships for Research Abroad, 
JSPS Grant-in-Aid for Young Scientist (B) 15K15942, 
and JSPS Grant-in-Aid for Scientific Research (C) 15KT0109.

\end{document}